\def\supp{\mathop{\rm supp}\nolimits}
\newtheorem{theorem}{Theorem}[section]
\newtheorem{lemma}[theorem]{Lemma}
\newtheorem{proposition}[theorem]{Proposition}
\newtheorem{corollary}[theorem]{Corollary}
\newtheorem{definition}[theorem]{Definition}
\newtheorem{remark}[theorem]{Remark}
\newtheorem{example}[theorem]{Example}
\renewenvironment{proof}[1][.]{%
\bigskip\noindent{\bf Proof#1 }}{%
\hfill$\blacksquare$\bigskip}
\begin{document}

\title{On the connection between a skew product IFS and the ergodic optimization for a finite family of  potentials}

\author[1]{Elismar R. Oliveira\thanks{Email: elismar.oliveira@ufrgs.br}
}

\affil[1]{Universidade Federal do Rio Grande do Sul\\
Instituto de Matem\'atica e Estat\'istica - UFRGS\\
Av. Bento Gon\c calves, 9500,
Porto Alegre - 91500 - RS -Brasil}

\date{\today}
\maketitle

\begin{abstract}
  We study a skew product IFS on the cylinder defined by Baker-like maps associated to a finite family of potential functions and the doubling map. We show that there exist a compact invariant set with attractive behavior and a random SRB measure whose support is in that set. We also study the IFS ergodic optimization problem for that finite family of potential functions and characterize the maximizing measures and the critical value through a discounting limit. This shows the connection between this maximization problem and the superior boundary of the compact invariant set, which is described as a graph  of the solution  of the Bellman equation.
\end{abstract}

\footnotetext{\textup{2010} \textit{Mathematics Subject Classification}: 37-XX, 28Dxx, 37B10, 37Hxx, 37B55, 49Lxx, 49L20, 90C39, 37L40.}
\footnotetext{\textit{Keywords}: Iterated function system, Ergodic Theory, Ergodic Optimization, Dynamic Programming, Bellman Equation, Discounted Ergodic Averages, SRB measures}

\section{Introduction}

This paper  studies  the  dynamics of a skew IFS
$G_{c}(x,y)= (T(x) , A_{c}(x)+\lambda y), c \in \mathcal{C}$ for a finite
family $\mathcal{C}$ of $m \geq 2$ potential functions $A_{c}:\textbf{S}^{1}
\to \mathbb{R}$, where $T: \textbf{S}^{1} \to \textbf{S}^{1}$ is the doubling
map on the circle $\textbf{S}^{1}$. The study is made
from  measure theoretical point
of view. The remarkable fact is that this problem has a structure that is
very close to the Lagrangian dynamic problem, where the circle $S^1$ plays
the role of a manifold and the symbolic space plays the role of the velocity
and defines a vertical coordinate on the cylinder. Thanks to that analogy, we are able to employ the same
variational methods used to study the Aubry-Mather theory, such as viscosity
solutions and discounting limits, to characterize the associated
optimization.

On one hand we have a geometrical property which is the existence
of an invariant subset $\Lambda$ of the cylinder having a random SRB
measure, which we are going to define later on. On the other
hand, we establish a subtle and non-trivial relation between the
boundary of this set and the analogous of the Aubry-Mather theory. More
specifically, as in the Lagrangian case, the solution of the Bellman equation
with a discounting parameter $\lambda \to 1$ plays the role of the viscosity
solution of the Hamilton-Jacobi equation with a viscosity parameter
$\alpha=1-\lambda \to 0$, allowing us to employ the Fenchel-Rockafellar
duality theorem to relate the Bellman equation solution and the problem of
maximizing the integral of a ``Lagrangian function" which in our case is a
parametric family of potential functions. The key element here is the set of
measures over which the maximization is performed. Thanks to the analogy with the
Lagrangian case, we are able to define the concept of holonomic
measure in our setting, which turns out to be the right choice
to reproduce the expected results such as the characterization of the
critical value and the support of the maximizing measures.

\subsection{References and main questions}

In \cite{MR2201152} the authors study,  from a topological point of view, the
dynamics of the skew product $G: X\times \mathbb{R} \to X\times \mathbb{R}$
given by $G(x,y)= (T(x) , A(x)+\lambda y)$, where $0< \lambda <1$, $X=
\mathbb{R}/\mathbb{Z}=\textbf{S}^1$, $T$ is the multiplication by some $l
\geq 2$ and $A:X \to \mathbb{R}$ is a Lipschitz potential function. These
maps are also called linear Baker maps. The goal of that paper was to prove
that there exists a topological attractor  which is
homeomorphic to an annulus when the discounting limit is taken, that is, for
$\lambda \to 1$. In \cite{MR1862809} this map was studied from a
measure theoretical point of view, giving a
description of the SRB measure. This kind of skew product and its connection
with iterated function systems (IFS) theory received great
attention in the last few years (see, for example, \cite{MR3695409} for
skew products involving diffeomorphisms in manifolds, \cite{MR1999572} for
non linear Baker maps and \cite{MR1862809}, \cite{MR2201152},
\cite{MR3806667} for the linear
ones).\\

Analyzing the results in \cite{MR2201152}, in the present paper we
address three main questions:
\begin{itemize}
\item[a)]What happens when we increase the number of skew maps, that is, if
    we consider a skew product IFS on the cylinder $X\times \mathbb{R}$
    formed by maps $G_{j}(x,y)= (T(x) , A_{j}(x)+\lambda y)$ for a finite
    family of $m \geq 2$ potential functions $\{A_{j}\}_{0\leq j\leq m-1}$?
    Do we still have an attractor? How do we characterize
    it?
\item[b)] The second question is about  the
    discounting limit, that is, for $\lambda \to 1$. Can
    we recover some information via ergodic optimization for a finite
    family of potential functions $\{A_{j}\}_{0\leq j\leq m-1}$?
\item[c)] The third question is about the existence and properties
    of the SRB measure for such systems.
\end{itemize}

In \cite{MR3806667}, question (b) was studied and a partial answer to the
conjecture on the structure of the boundary of the attractor was given. The
authors prove that the  superior boundary  of the
attractor is a piecewise differentiable graph $(x, v_{\lambda}(x))$ under
some hypothesis. Other important result was the description of the relation between the upper boundary of the attractor and the
maximizing measures for the potential. This led us to believe that we
could extend these ideas for the IFS problem.

We point out that we choose a simple setting in order to make it easier
to understand the ideas. However, several generalizations are trivially
derived from our arguments. For instance, in the skew IFS problem we
can replace $\textbf{S}^1$ by the $n$-torus $\mathbb{T}^n$ (or other suitable
compact metric space) and consider the skew IFS in $\mathbb{T}^n \times
\mathbb{R}$, or to keep $\textbf{S}^1$ and consider the map $T$ as the
multiplication by $2\leq l \in \mathbb{N}$ or even consider an expanding
automorphism of degree $l \geq 2$. In this case, we simply change
the Lebesgue measure $\ell$ by some $T$-ergodic measure, obtaining the same
results for random SRB measures. We can also easily replace the IFS
$\tau_i(x)=\frac{1}{2}x+ \frac{i}{2}$, $i\in \mathcal{I}=\{0,1\}$, by the
inverse branches of an expanding automorphism $T(x)$, or even a continuous IFS
$R =(X, \tau_{a})_{a \in \mathcal{I}}$ for a compact set $\mathcal{I}$ of
maps on a compact metric space $X$. The results related to IFS
ergodic optimization are still true but we lose the connection with the
invariant set of the skew IFS because the IFS maps are not inverse branches
of the map $T$ that defines the skew IFS.

\subsection{Paper organization}
The structure of the paper is as follows.\\

In Section~\ref{The skew product IFS} we consider the skew IFS problem
showing the existence of a fractal compact invariant set $\Lambda$ and
characterize its boundary using dynamic programming. We also study the random
SRB measure associated to this system, obtaining our main result of the
section, Theorem~\ref{a finite family of
sbr} relating the temporal averages with respect to the skew IFS and the
spatial averages with respect to this measure.

In Section~\ref{IFS ergodic optimization problem for a finite family of
potentials}, we analyze the IFS ergodic optimization problem for a finite
family of potentials. Initially, for ergodic averages, linking this problem
with the superior boundary of $\Lambda$, and after for
$\lambda$-discounted ergodic averages. Additionally, we observe that we can
approximate solutions of the IFS ergodic optimization problem via discounting
limit, that is, for $\lambda \to 1$.

In Section~\ref{appendix} we present a duality result regarding the
discounted holonomic measures for an IFS, giving some additional insight on
the problems of Section~\ref{IFS ergodic optimization problem for a finite
family of potentials}. In particular, in Theorem~\ref{discounted holonomic
prob properties}, we apply this theorem to obtain a paramount
result on the critical value $m_{\lambda}(R)$ of the ergodic optimization problem for
discounted holonomic measures with a trace $\nu$ which is
Theorem~\ref{discounted holonomic prob properties} (b),
$\displaystyle m_{\lambda}(R) =  (1-\lambda) \int v_{\lambda}(x)d\nu(x)$ where $v_{\lambda}(x)$ is the solution of the Bellman equation. This formula will allow us to deal with the discount limit $\lambda \to 1$ later.

\section{The skew product IFS}\label{The skew product IFS}
Consider a fixed number $0< \lambda <1$. We set the topology in $X \times \mathbb{R}$ by considering it as a complete metric space with the distance induced by the quotient  $\mathbb{R}/\mathbb{Z} \times \mathbb{R}$ over $\mathbb{R}^{2}$. Therefore $X \times \mathbb{R}$ is a complete metric space.

Any shift space with $N$ symbols $\{q_0, ..., q_{N-1}\}^{\mathbb{N}}$  used here will be endowed with the product topology induced by the distance $d(\bar{a}, \bar{b})=\lambda^n,$ where $a_k=b_k$ for $k=0, ..., n-1$ and $a_n\neq b_n$.
The shift map $\sigma$ and the concatenation map $*$ are given by
$\sigma(\bar{b})= (b_1, b_2,  ... ) \text{ and } e*\bar{b}=(e, b_0, b_1,  ... ),$
where $\bar{b}=(b_0, b_1,  ... ) \in \{q_0, ..., q_{N-1}\}^{\mathbb{N}}$ (we will denote $\bar{b}$ the sequence and $b_n$ its elements).

Consider $A_c: X \to \mathbb{R}$ for $c \in \mathcal{C}=\{0, ...,m-1\}$ a family of Lipschitz potentials
$$\displaystyle{\rm Lip}(A_c)=\sup_{x\neq y}\frac{|A_c(x)-A_c(y)|}{d(x,y)} < \infty, $$
$T(x)=2 x$, the multiplication on $X=\mathbb{R}/\mathbb{Z}$ and the IFS $R= (X\times\mathbb{R}, G_c(x,y))_{c \in \mathcal{C}}$ where, $G_c(x,y)= (T(x) , A_c (x)+\lambda y).$ Our purpose here is to investigate dynamical and ergodic properties of the skew IFS $R$ in $X \times \mathbb{R}$.

\subsection{The invariant set $\Lambda$}
We may ask if there exists some invariant set or some attractor for the IFS $R$ and how this set can be characterized. Since each map $G_c(x,y)$ expand by a factor 2 in the $x$ direction and contract by a factor $\lambda$ in the $y$ direction, it is not possible to employ the classical methods for contractive IFS. In the case $m=1$, \cite{MR2201152} shows that there exists a solenoidal attractor and study its topological properties.

The orbit of a point $(x',y')$ by the IFS $R$, controlled by the sequence $\bar{c}=(c_0, c_1,  ... ) \in \mathcal{C}^{\mathbb{N}}$, is the projection on $X\times\mathbb{R}$ of the  orbit of the skew dynamical system $$ G (x',y',\bar{c})=(G_{c_{0}}(x',y') , \sigma(\bar{c}))=(T(x') , A_{c_0} (x')+\lambda y', (c_1, c_2,  ... )).$$
The $n$-th iterate $ G ^n(x',y',\bar{c})$ will be $(G_{c_{n-1}}(\cdots(G_{c_{0}}(x',y'))) , \sigma^n(\bar{c}))$ or, more precisely
$$\left(T^n(x') , \sum_{i=0}^{n-1}\lambda^{i}A_{c_{n-1-i}} (T^{n-1-i}(x')) +\lambda^n y', (c_n, c_{n+1},  ... )\right).$$
Therefore, the projection $(x,y)$ on $X\times\mathbb{R}$ is
$x=T^n(x') \text{ and } y=\sum_{i=0}^{n-1}\lambda^{i}A_{c_{n-1-i}} (T^{n-1-i}(x')) +\lambda^n y'.$

\begin{remark}
  We denote ${\rm Per}_{n}(T)=\{x \in X \; | \; T^n (x)=x\}$ the set of periodic points of $T$ with period $n$. Using the formulas for the $n$-th iterate $ G ^n(x',y',\bar{c})$ we can see that each Baker map $G_{c}$ has periodic points projecting in ${\rm Per}_{n}(T)$, that is,
  $${\rm Per}_{n}(G_{c})=\left\{(x',y') \in X\times\mathbb{R} \; | \;x' \in {\rm Per}_{n}(T) \text{ and } y'=\frac{1}{1-\lambda^n}\sum_{i=0}^{n-1}\lambda^{i}A_{c} (T^{n-1-i}(x')) \right\}.$$
  In particular, $|y'| \leq \frac{1}{1-\lambda} \|A_{c}\|_{\infty}$ for any $(x',y') \in {\rm Per}_{n}(G_{c})$.
\end{remark}

We recall that the Hutchinson-Barnsley (HB) operator is given by $\displaystyle F(U)=\bigcup_{c \in \mathcal{C}} G_c(U)$ for any $U \in 2^{X\times\mathbb{R}}$. Considering $\mathcal{K}^{*}(X\times\mathbb{R})$ the family of all nonempty
compact subsets of $X\times\mathbb{R}$, we endow it with the Hausdorff metric and restrict $F$ to this family.

A compact set $\Lambda$ is self-similar (invariant or fractal)  if $F(\Lambda)= \Lambda$. We said that $\Lambda$ is an attractor if there exists $U_{0}$ a neighborhood of $\Lambda$, called the basin of attraction, such that, for any $U \in \mathcal{K}^{*}( U_{0})$ we have $F^n(U) \to\Lambda$. When   $U_{0}=X\times\mathbb{R}$ we say that the attractor is a global attractor.
From \cite{MR2818686}, Lemma 2, if $\Lambda$ is an  attractor with   basin of attraction $U_{0}$, then $\displaystyle\Lambda=\lim_{N \to \infty} \overline{\bigcup_{n\geq N}F^{n}(U)}$  for any $U \in \mathcal{K}^{*}( U_{0})$.

\begin{theorem}\label{globalbasin}
  Consider any $\displaystyle T_0 > \frac{1}{1-\lambda} \max_{c \in \mathcal{C}}\|A_{c}\|_{\infty}$ and the annulus $U_0=X\times (-T_0, T_0)$ then\\
a) $F(\overline{U_0})\subset U_0$.\\
b) $U_0$ is globally attracting, that is, for any $M>0$ we have $F^n (X\times [-M, M]) \subset U_0$ for some $n \in \mathbb{N}$ depending only on $M$.\\
c) If $\Lambda \in \mathcal{K}^{*}(X\times\mathbb{R})$ is self similar, then $ \Lambda \subset U_0$.
\end{theorem}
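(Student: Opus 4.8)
The plan is to prove the three parts essentially by direct estimates on the fiber coordinate, since the $x$-coordinate dynamics ($x \mapsto T(x)$) is irrelevant to the set $U_0 = X \times (-T_0, T_0)$, which is a product where the $X$-factor is the whole circle. Let me think through each part.

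Part (a): I need $F(\overline{U_0}) \subset U_0$. Take any $(x,y)$ with $|y| \le T_0$. For each $c$, $G_c(x,y) = (T(x), A_c(x) + \lambda y)$. The fiber coordinate is $A_c(x) + \lambda y$, and $|A_c(x) + \lambda y| \le \|A_c\|_\infty + \lambda T_0 \le \max_c \|A_c\|_\infty + \lambda T_0$. I want this strictly less than $T_0$. Since $T_0 > \frac{1}{1-\lambda}\max_c\|A_c\|_\infty$, we have $\max_c\|A_c\|_\infty < (1-\lambda)T_0$, so $\max_c\|A_c\|_\infty + \lambda T_0 < T_0$. That gives the strict inclusion into the open annulus.

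Part (b): For $(x,y) \in X \times [-M, M]$, iterate and track the fiber coordinate. After $n$ steps along any branch, the fiber is $\sum_{i=0}^{n-1}\lambda^i A_{c_{n-1-i}}(\cdots) + \lambda^n y'$, whose absolute value is at most $\frac{1}{1-\lambda}\max_c\|A_c\|_\infty + \lambda^n M$. Since $\frac{1}{1-\lambda}\max_c\|A_c\|_\infty < T_0$ strictly, choose $n$ large enough that $\lambda^n M < T_0 - \frac{1}{1-\lambda}\max_c\|A_c\|_\infty$; this $n$ depends only on $M$ (and $\lambda, T_0$).

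Part (c): If $\Lambda = F(\Lambda)$ is compact, its fiber coordinates are bounded, say by $M$. Since $\Lambda = F^n(\Lambda) \subset F^n(X \times [-M,M])$, part (b) gives $\Lambda \subset U_0$.

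The main point is that the contraction factor $\lambda < 1$ on the fiber dominates regardless of the expansion in $x$, so a uniform geometric-series bound controls the fiber coordinate. Here is the plan written out.

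\begin{proof}
The key observation is that the set $U_0=X\times(-T_0,T_0)$ is a full product over the circle, so the $x$-coordinate plays no role; everything reduces to controlling the fiber coordinate $y$, where each $G_c$ acts by the affine contraction $y\mapsto A_c(x)+\lambda y$ with factor $\lambda<1$. Write $\displaystyle K=\max_{c\in\mathcal{C}}\|A_c\|_\infty$, so that the hypothesis reads $T_0>\frac{K}{1-\lambda}$, equivalently $K<(1-\lambda)T_0$.

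For part (a), take any $(x,y)\in\overline{U_0}$, so $|y|\le T_0$. For every $c\in\mathcal{C}$ the fiber coordinate of $G_c(x,y)$ satisfies $|A_c(x)+\lambda y|\le K+\lambda T_0<(1-\lambda)T_0+\lambda T_0=T_0$, using the reformulated hypothesis. Hence $G_c(\overline{U_0})\subset U_0$ for each $c$, and taking the union over $c$ gives $F(\overline{U_0})\subset U_0$.

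For part (b), fix $M>0$ and $(x',y')\in X\times[-M,M]$. Any point of $F^n(X\times[-M,M])$ is obtained along some branch $\bar{c}$, and by the formula for the $n$-th iterate its fiber coordinate equals $\sum_{i=0}^{n-1}\lambda^{i}A_{c_{n-1-i}}(T^{n-1-i}(x'))+\lambda^n y'$. Its absolute value is bounded by $\sum_{i=0}^{n-1}\lambda^i K+\lambda^n M\le\frac{K}{1-\lambda}+\lambda^n M$. Since $\frac{K}{1-\lambda}<T_0$ strictly, it suffices to choose $n$ so large that $\lambda^n M<T_0-\frac{K}{1-\lambda}$; such an $n$ exists because $\lambda^n\to 0$, and it depends only on $M$ (together with the fixed data $\lambda,K,T_0$). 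For this $n$ every fiber coordinate has absolute value strictly less than $T_0$, so $F^n(X\times[-M,M])\subset U_0$.

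For part (c), let $\Lambda$ be self-similar, $F(\Lambda)=\Lambda$. Since $\Lambda$ is compact, its projection to the fiber coordinate is bounded, so $\Lambda\subset X\times[-M,M]$ for some $M>0$. Iterating self-similarity gives $\Lambda=F^n(\Lambda)\subset F^n(X\times[-M,M])$ for every $n$, and applying part (b) for the chosen $n$ yields $\Lambda\subset U_0$.
\end{proof}
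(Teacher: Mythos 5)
Your proposal is correct and follows essentially the same route as the paper: part (a) by the estimate $\max_c\|A_c\|_\infty+\lambda T_0<(1-\lambda)T_0+\lambda T_0=T_0$, part (b) by bounding the fiber coordinate of the $n$-th iterate by a geometric series plus $\lambda^n M$ and choosing $n$ from $M$ alone, and part (c) by combining compactness of $\Lambda$ with $\Lambda=F^n(\Lambda)$ and part (b). The only (immaterial) difference is a slight algebraic rearrangement in (b), where the paper writes the bound as $\frac{1}{1-\lambda}\max_c\|A_c\|_\infty+\lambda^n\bigl(|y'|-\frac{1}{1-\lambda}\max_c\|A_c\|_\infty\bigr)$ rather than your $\frac{K}{1-\lambda}+\lambda^n M$.
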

\begin{proof}
(a)  We know that
$G_{c_{0}}(x',y') =(T(x') , A_{c_0} (x')+\lambda y')=(x,y)$ so
$$|y| \leq |A_{c_0} (x')|+\lambda |y'| \leq \max_{c \in \mathcal{C}}\|A_{c}\|_{\infty} +\lambda |y'|=$$ $$= (1-\lambda) \frac{1}{1-\lambda}\max_{c \in \mathcal{C}}\|A_{c}\|_{\infty} +\lambda |y'|<  (1-\lambda) T_0 +\lambda |y'|\leq (1-\lambda) T_0 +\lambda T_0= T_0.$$
If $(x',y') \in \overline{U_0}$ then  $G_{c_{0}}(x',y') \in U_0$ for any $c_0 \in \mathcal{C}$, thus $F(\overline{U_0})\subset U_0$, because it is a finite union.\\

(b) For a fixed $(x',y') \in X\times\mathbb{R}$ and for any $\bar{c}=(c_0, c_1,  ... ) \in \mathcal{C}^{\mathbb{N}}$ the projection on $X\times\mathbb{R}$ of  $ G ^n (x',y',\bar{c})$  will be in $U_0$ for $n$ big enough.
We need to prove that $\displaystyle \left|\sum_{i=0}^{n-1}\lambda^{i}A_{c_{n-1-i}} (T^{n-1-i}(x')) +\lambda^n y'\right|< T_0,$
when $n$ is big enough uniformly in $\bar{c}$. Indeed,
$$\left|\sum_{i=0}^{n-1}\lambda^{i}A_{c_{n-1-i}} (T^{n-1-i}(x')) +\lambda^n y'\right| \leq \frac{1-\lambda^n}{1-\lambda}\max_{c \in \mathcal{C}}\|A_{c}\|_{\infty} +\lambda^n |y'|$$
$$\leq  \frac{1}{1-\lambda}\max_{c \in \mathcal{C}}\|A_{c}\|_{\infty} +\lambda^n \left(|y'| - \frac{1}{1-\lambda}\max_{c \in \mathcal{C}}\|A_{c}\|_{\infty}\right) <T_0,$$
because $\displaystyle \lambda^n \left(|y'| - \frac{1}{1-\lambda}\max_{c \in \mathcal{C}}\|A_{c}\|_{\infty}\right) \stackrel{n \to \infty}{\to} 0$, $\displaystyle T_0 > \frac{1}{1-\lambda} \max_{c \in \mathcal{C}}\|A_{c}\|_{\infty}$ and this limit is independent of $\bar{c}$,  depending only on $|y'|$. From this, we can conclude that for any $M>0$ we have $F^n (X\times [-M, M]) \subset U_0$, for some $n \in \mathbb{N}$, depending only on $M$.\\

(c) From (b), taking $n$ large enough, we get that $\Lambda$ should be a subset of the ``attracting basin"   $U_0$ because $\Lambda=F^n(\Lambda)$.
\end{proof}

To characterize the points of $\Lambda$, we need to evaluate the images of arbitrarily large order of a point $(x', y') \in X \times \mathbb{R}$ because $F^n (\Lambda)= \Lambda$, for all $n \in \mathbb{N}$, when $\Lambda$ is self similar.
We can write $F^n (\Lambda)$ as
$$\{(x,y)=G_{c_{n-1}}(\cdots(G_{c_{0}}(x',y'))) \; | \; c_{0}, ..., c_{n-1} \in \mathcal{C}, \; (x',y')\in  \Lambda\}=$$
$$\left\{\left(T^n(x'), \sum_{i=0}^{n-1}\lambda^{i}A_{c_{n-1-i}} (T^{n-1-i}(x')) +\lambda^n y'\right) \; | \; c_{0}, ..., c_{n-1} \in \mathcal{C}, \; (x',y')\in  \Lambda\right\}.$$
Since $X$ is compact it is natural to assume, by analogy with \cite{MR2201152}, that the $y$ coordinate is bounded in every infinite pre-orbit of $\Lambda$, so we define
$$\Lambda=\left\{(x,y) \in X\times\mathbb{R} \; | \;\exists M>0, \;\exists \bar{c} \in \mathcal{C}^{\mathbb{N}}, \; \left|  y_n \right|\leq M, \text{ when }  G ^n(x_n,y_n,\sigma^{n}\bar{c})=(x,y),\; \forall n \in \mathbb{N}\right\}.$$

Consider $\mathcal{I}=\{0,1\}$,  $\bar{a}=(a_0, a_1,  ... ) \in  \mathcal{I}^{\mathbb{N}}$ and $\tau_i(x)=\frac{1}{2}x+ \frac{i}{2}$, $i\in \mathcal{I}$, the inverse branches of $T(x)$. We also define $\tau_{i,\bar{a}}(x)= \tau_{a_{i-1}} \circ \tau_{a_{i-2}} ...\circ\tau_{a_0} (x )$.

Inspired by \cite{MR1862809} equation (2), we introduce the function $S: X\times \mathcal{C}^{\mathbb{N}} \times \mathcal{I}^{\mathbb{N}}\to \mathbb{R}$  by
$\displaystyle S_{x}(\bar{c},\bar{a})= \sum_{i=0}^{\infty} \lambda^i A_{c_{i}}(\tau_{i,\bar{a}}(x))$ to study the iteration of points in $\Lambda$.

\begin{proposition}\label{representationlambda}
   The set $\Lambda$ can be written as
   $\displaystyle \Lambda=\left\{\left(x,S_{x}(\bar{c},\bar{a})\right) \in X\times\mathbb{R} \; | \;\forall \bar{c} \in \mathcal{C}^{\mathbb{N}},  \;\forall \bar{a} \in  \mathcal{I}^{\mathbb{N}}\right\}.$
\end{proposition}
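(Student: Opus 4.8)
My plan is to prove the two inclusions separately, the engine in both directions being the absolute convergence of the series defining $S$ together with the one-step recursion
\[
S_{x}(\bar{c},\bar{a})=A_{c_{0}}(x)+\lambda\,S_{\tau_{a_{0}}(x)}(\sigma\bar{c},\sigma\bar{a}),
\]
which follows by isolating the $i=0$ term and using the identity $\tau_{i,\bar{a}}(x)=\tau_{i-1,\sigma\bar{a}}(\tau_{a_{0}}(x))$. First I would record that $S$ is well defined: since $0<\lambda<1$ and each $A_{c}$ is bounded, the series converges absolutely and uniformly, with $|S_{x}(\bar{c},\bar{a})|\le\frac{1}{1-\lambda}\max_{c\in\mathcal{C}}\|A_{c}\|_{\infty}<T_{0}$. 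In particular the right-hand set is contained in the annulus $U_{0}$, which is consistent with Theorem~\ref{globalbasin}(c).

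For the inclusion of the right-hand set into $\Lambda$, I would fix $x$, $\bar{c}\in\mathcal{C}^{\mathbb{N}}$ and $\bar{a}\in\mathcal{I}^{\mathbb{N}}$ and exhibit an explicit bounded infinite pre-orbit of $(x,S_{x}(\bar{c},\bar{a}))$. Set $x_{n}=\tau_{n,\bar{a}}(x)$, so that $T(x_{n+1})=x_{n}$ and the branch used at the $n$-th backward step is $a_{n}$, and let $y_{n}=S_{x_{n}}(\sigma^{n}\bar{c},\sigma^{n}\bar{a})$ be the corresponding shifted tail of the same series. A direct verification through the recursion shows that consecutive pre-orbit points are related by one application of the skew map with control $c_{n}$ and branch $a_{n}$, so $(x_{n},y_{n})_{n\ge0}$ is a genuine pre-orbit; the geometric estimate gives $|y_{n}|\le\frac{1}{1-\lambda}\max_{c}\|A_{c}\|_{\infty}$ uniformly in $n$, so this common bound serves as the $M$ witnessing $(x,S_{x}(\bar{c},\bar{a}))\in\Lambda$.

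For the reverse inclusion, take $(x,y)\in\Lambda$ with witnessing bound $M$, control sequence $\bar{c}$, and bounded pre-orbit $(x_{n},y_{n})$. Since $T$ is two-to-one with inverse branches $\tau_{0},\tau_{1}$, the tower of pre-images $x_{n}$ singles out a branch sequence $\bar{a}\in\mathcal{I}^{\mathbb{N}}$ with $x_{n}=\tau_{n,\bar{a}}(x)$. Telescoping the one-step pre-image relations of the form $y_{n}=A_{c_{n}}(\cdot)+\lambda y_{n+1}$ yields, for every finite $n$, the expansion $y=\sum_{i=0}^{n-1}\lambda^{i}A_{c_{i}}(\tau_{i,\bar{a}}(x))+\lambda^{n}y_{n}$. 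Because $|y_{n}|\le M$, the remainder satisfies $|\lambda^{n}y_{n}|\le\lambda^{n}M\to0$, so letting $n\to\infty$ gives $y=\sum_{i=0}^{\infty}\lambda^{i}A_{c_{i}}(\tau_{i,\bar{a}}(x))=S_{x}(\bar{c},\bar{a})$, as desired.

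I expect two delicate points. First, the vanishing of the tail $\lambda^{n}y_{n}$ is precisely where the uniform bound $M$ from the definition of $\Lambda$ is indispensable: without an a priori bound on the pre-orbit there is no reason for the finite expansions to converge to $y$, which explains why boundedness of the pre-orbit is the defining requirement rather than an afterthought. Second, and more technically, one must keep scrupulous track of the bookkeeping that matches each potential $A_{c_{i}}$ to the correct pre-image $\tau_{i,\bar{a}}(x)$ and that identifies the branch sequence $\bar{a}$ with the two inverse branches of $T$; one should also check that the construction is surjective, i.e.\ that every $\bar{a}\in\mathcal{I}^{\mathbb{N}}$ is realized, which holds because at each backward step both branches $\tau_{0}$ and $\tau_{1}$ are available. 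Getting this indexing and the branch correspondence exactly right is the main obstacle, whereas the analytic content reduces to the elementary geometric-series estimates above.
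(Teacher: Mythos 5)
Your overall plan --- building the pre-image tower $x_{n}=\tau_{a_{n-1}}\circ\cdots\circ\tau_{a_{0}}(x)$, using the tails of the series as the fiber coordinates $y_{n}$ of a bounded backward orbit, and conversely telescoping the one-step pre-image relations and killing the remainder $\lambda^{n}y_{n}$ with the bound $M$ --- is exactly the paper's proof of Proposition~\ref{representationlambda}, and the analytic estimates (geometric series, the uniform bound $\tfrac{1}{1-\lambda}\max_{c}\|A_{c}\|_{\infty}$, extraction of the branch sequence $\bar{a}$) are also the same. However, the step you yourself flag as ``delicate bookkeeping'' is not merely delicate: as written, it fails, because your engine recursion is off by one.

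You wrote
\[
S_{x}(\bar{c},\bar{a})=A_{c_{0}}(x)+\lambda\,S_{\tau_{a_{0}}(x)}(\sigma\bar{c},\sigma\bar{a}),
\]
i.e.\ the $i=0$ term is $A_{c_{0}}$ evaluated at the base point $x$ itself; this is indeed what the paper's stated definition of $\tau_{i,\bar{a}}$ gives (it makes $\tau_{0,\bar{a}}=\mathrm{id}$), but that definition carries an off-by-one typo. Along a backward orbit of the skew maps a potential is never evaluated at $x$: the relation $G_{c_{0}}(x_{1},y_{1})=(x,y)$ forces $y=A_{c_{0}}(x_{1})+\lambda y_{1}$ with $x_{1}=\tau_{a_{0}}(x)$, so the $i$-th term of any series representing points of $\Lambda$ must be $A_{c_{i}}$ composed with $i+1$ inverse branches, namely $A_{c_{i}}(\tau_{a_{i}}\circ\cdots\circ\tau_{a_{0}}(x))$. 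Concretely, with your recursion the candidate pre-orbit $y_{n}=S_{x_{n}}(\sigma^{n}\bar{c},\sigma^{n}\bar{a})$ satisfies $y_{n}=A_{c_{n}}(x_{n})+\lambda y_{n+1}$, whereas $G_{c_{n}}(x_{n+1},y_{n+1})=(x_{n},y_{n})$ requires $y_{n}=A_{c_{n}}(x_{n+1})+\lambda y_{n+1}$; so your ``direct verification'' does not close. The discrepancy is not cosmetic: take $A_{0}=A_{1}=A$ a Lipschitz bump with $A(2/3)=1$ and support in a small neighborhood of $2/3$ (a period-two point of $T$). Then by Proposition~\ref{lowerupperboundary} the fiber of $\Lambda$ over $2/3$ is bounded above by $v_{\lambda}^{+}(2/3)=\lambda/(1-\lambda^{2})$, while your set contains the point $\bigl(2/3,\,1/(1-\lambda^{2})\bigr)$ coming from the alternating backward orbit through $\{2/3,1/3\}$; hence under your indexing the claimed equality of sets is actually false. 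The repair is to adopt the convention that the paper's proof and Lemma~\ref{commut}(a) silently use, $\tau_{i,\bar{a}}=\tau_{a_{i}}\circ\cdots\circ\tau_{a_{0}}$, so that the recursion reads
\[
S_{x}(\bar{c},\bar{a})=A_{c_{0}}(\tau_{a_{0}}(x))+\lambda\,S_{\tau_{a_{0}}(x)}(\sigma\bar{c},\sigma\bar{a}).
\]
With this single correction the tails do satisfy the pre-orbit relation, your uniform bound serves as the witness $M$, the telescoping in the reverse inclusion becomes literally the paper's computation $y_{n}=\lambda^{-n}\bigl(y-\sum_{i=0}^{n-1}\lambda^{i}A_{c_{i}}(\tau_{i,\bar{a}}(x))\bigr)$, and the proof is complete.
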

\begin{proof}
Let us compute some pre-images of a point $(x,y)$:\\
If $G_{c_{0}}(x_1,y_1)=(x, y)$, then $T(x_1)=x$ and $A_{c_{0}}(x_1)+\lambda y_1= y$. Thus, $\exists a_0 \in \mathcal{I}$, such that, $x_1=\tau_{a_0}(x)$ and $y= A_{c_{0}}(\tau_{a_0}(x)) +\lambda y_1$. Therefore
$y_1=\frac{1}{\lambda}\left(y- A_{c_{0}}(\tau_{a_0}(x))\right).$

Following this procedure we obtain that if $G_{c_{1}}(x_2,y_2)=(x_1,y_1)$, then
$x_2=\tau_{a_1}\tau_{a_0}(x)$ and
$y_1= A_{c_{1}}(x_2) +\lambda y_2.$ So
$y_2=\frac{1}{\lambda^2}\left(y- \left(A_{c_{0}}(\tau_{a_0}(x))- \lambda A_{c_{1}}(\tau_{a_1}\tau_{a_0}(x))\right)\right).$
By proceeding a formal induction we obtain
$x_n=\tau_{a_{n-1}} \circ ... \circ \tau_{a_0}(x)$ and
$y_n=\frac{1}{\lambda^n}\left(y- \sum_{i=0}^{n-1} \lambda^i A_{c_{i}}(\tau_{i,\bar{a}}(x))\right).$

From this computations we conclude that
$$\Lambda=\left\{(x,y) \in X\times\mathbb{R} \; | \;\forall \bar{c} \in \mathcal{C}^{\mathbb{N}}, \; \exists M>0, \; \exists \bar{a} \in  \mathcal{I}^{\mathbb{N}}, \; \left|y- \sum_{i=0}^{n-1} \lambda^i A_{c_{i}}(\tau_{i,\bar{a}}(x))\right| \leq M \lambda^n,\; \forall n \in \mathbb{N}\right\}.$$
Taking $n \to \infty$, we obtain that
$\Lambda=\left\{\left(x,\sum_{i=0}^{\infty} \lambda^i A_{c_{i}}(\tau_{i,\bar{a}}(x))\right) \in X\times\mathbb{R} \; | \;\forall \bar{c} \in \mathcal{C}^{\mathbb{N}},  \;\forall \bar{a} \in  \mathcal{I}^{\mathbb{N}}\right\}.$
\end{proof}

In the next proposition we show that $\Lambda$ is self similar.

\begin{proposition}\label{selfsimilarlambda}
  The set $\Lambda$ is self similar, that is, $F(\Lambda)= \Lambda$.
\end{proposition}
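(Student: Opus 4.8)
The plan is to establish the two inclusions $F(\Lambda)\subseteq\Lambda$ and $\Lambda\subseteq F(\Lambda)$ separately, working entirely with the explicit description of $\Lambda$ furnished by Proposition~\ref{representationlambda}, namely $\Lambda=\{(x,S_x(\bar c,\bar a)):x\in X,\ \bar c\in\mathcal{C}^{\mathbb{N}},\ \bar a\in\mathcal{I}^{\mathbb{N}}\}$. The engine of both inclusions is a single one-step functional relation for the coding function $S$, obtained by splitting off the first term of its defining series and reindexing,
$$S_x(\bar c,\bar a)=A_{c_0}(\tau_{a_0}(x))+\lambda\,S_{\tau_{a_0}(x)}(\sigma\bar c,\sigma\bar a),$$
which says precisely that a shift of the symbolic data $(\bar c,\bar a)$ corresponds, at the level of fibers, to peeling off one application of a generator $G_{c_0}$. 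I would derive this identity first, using that $\tau_{a_0}$ is an inverse branch of $T$, so that $T(\tau_{a_0}(x))=x$ and the compositions of branches defining the summands of $S$ telescope correctly under the shift $\sigma$.

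For $F(\Lambda)\subseteq\Lambda$, I would take an arbitrary point $(x,S_x(\bar c,\bar a))\in\Lambda$ and an arbitrary generator index $e\in\mathcal{C}$, and compute $G_e(x,S_x(\bar c,\bar a))=(T(x),\,A_e(x)+\lambda S_x(\bar c,\bar a))$. Writing $x'=T(x)$ and letting $b\in\mathcal{I}$ be the unique branch with $\tau_b(x')=x$ (unique because $T$ is two-to-one), the recursion applied at $x'$ to the prepended sequences $e*\bar c$ and $b*\bar a$ gives $A_e(x)+\lambda S_x(\bar c,\bar a)=S_{x'}(e*\bar c,\,b*\bar a)$. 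Since $e*\bar c\in\mathcal{C}^{\mathbb{N}}$ and $b*\bar a\in\mathcal{I}^{\mathbb{N}}$, the image point is of the required form and hence lies in $\Lambda$; as $e$ was arbitrary this yields $F(\Lambda)\subseteq\Lambda$.

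For $\Lambda\subseteq F(\Lambda)$, I would reverse the construction: given $(x',S_{x'}(\bar c,\bar a))\in\Lambda$, set $x=\tau_{a_0}(x')$ and $e=c_0$, and consider the point $p=(x,S_x(\sigma\bar c,\sigma\bar a))\in\Lambda$. Applying $G_{c_0}$ to $p$ produces first coordinate $T(\tau_{a_0}(x'))=x'$ and second coordinate $A_{c_0}(\tau_{a_0}(x'))+\lambda S_{\tau_{a_0}(x')}(\sigma\bar c,\sigma\bar a)$, which by the same recursion equals $S_{x'}(\bar c,\bar a)$. Thus $(x',S_{x'}(\bar c,\bar a))=G_{c_0}(p)\in F(\Lambda)$, and since the point was arbitrary we obtain $\Lambda\subseteq F(\Lambda)$. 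Combining the two inclusions gives $F(\Lambda)=\Lambda$.

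The only genuine care needed is the bookkeeping in the recursion: one must track the inverse-branch indices correctly (the $i$-th summand of $S$ involves the composition of the branches $a_0,\dots,a_i$ applied to the base point, and the shift $\sigma$ removes $a_0$ after first mapping the base point by $\tau_{a_0}$), and one must check that $S$ is well defined, which is immediate since $\lvert S_x(\bar c,\bar a)\rvert\le \frac{1}{1-\lambda}\max_{c\in\mathcal{C}}\lVert A_c\rVert_\infty<\infty$ because each $A_c$ is bounded and $0<\lambda<1$. I expect the verification of the one-step relation, in particular the correct alignment of the evaluation point $\tau_{a_0}(x)$ with the generator $G_{c_0}$ and the role of $\lambda^0$ versus $\lambda^1$, to be the main (though routine) obstacle; once it is in place, both inclusions follow at once from the prepend/peel argument above.
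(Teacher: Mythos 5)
Your proof is correct and follows essentially the same route as the paper's: both inclusions rest on the one-step relation $S_x(\bar c,\bar a)=A_{c_0}(\tau_{a_0}(x))+\lambda S_{\tau_{a_0}(x)}(\sigma\bar c,\sigma\bar a)$ (which is exactly Lemma~\ref{commut}(a)), with prepending of symbols giving $F(\Lambda)\subseteq\Lambda$ and peeling off the first summand giving $\Lambda\subseteq F(\Lambda)$, the latter being word-for-word the paper's argument. The only cosmetic difference is that for $F(\Lambda)\subseteq\Lambda$ the paper manipulates the finite-sum characterization $\bigl|y'-\sum_{i=0}^{n-1}\lambda^i A_{c_i}(\tau_{i,\bar a}(x'))\bigr|\leq M\lambda^n$ from the proof of Proposition~\ref{representationlambda}, whereas you work directly with the limiting series representation.
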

\begin{proof}
Indeed, to show that  $F(\Lambda)\subseteq \Lambda$ we take $(x',y') \in \Lambda$ and $\displaystyle \bar{c} \in \mathcal{C}^{\mathbb{N}}$, $M>0$, $\bar{a} \in  \mathcal{I}^{\mathbb{N}}$, such that, $\displaystyle \left|y'- \sum_{i=0}^{n-1} \lambda^i A_{c_{i}}(\tau_{i,\bar{a}}(x'))\right| \leq M \lambda^n$, for all $n \in \mathbb{N}$. We need to show that $G_b(x',y') \in \Lambda$, for any $b \in \mathcal{C}$.

If $G_{b}(x',y')=(x, y)$, then $T(x')=x$ and $A_{b}(x')+\lambda y'= y$. Thus, $\exists e \in \mathcal{I}$, such that, $x'=\tau_{e}(x)$ and $y= A_{b}(\tau_{e}(x)) +\lambda y'$, therefore
$y'=\frac{1}{\lambda}\left(y- A_{b}(\tau_{e}(x))\right).$

Substituting $x'$ and $y'$ in $\left|y'- \sum_{i=0}^{n-1} \lambda^i A_{c_{i}}(\tau_{i,\bar{a}}(x'))\right| \leq M \lambda^n$,  we obtain
$$\left|\frac{1}{\lambda}\left(y- A_{b}(\tau_{e}(x))\right)- \sum_{i=0}^{n-1} \lambda^i A_{c_{i}}(\tau_{i,\bar{a}}(\tau_{e}(x)))\right| \leq M \lambda^n,$$
which is equivalent to
$\displaystyle\left|y- \sum_{i=0}^{n} \lambda^i A_{b_{i}}(\tau_{i,\bar{e}}(x))\right| \leq M \lambda^{n+1},$
where $\bar{b}=b*\bar{c}=(b, c_0, c_1,...)$ and $\bar{e}=e*\bar{a}=(e, a_0, a_1, ...)$.  Thus, $(x,y) \in \Lambda$.

To prove the opposite inequality, $\Lambda \subseteq F(\Lambda)$, we consider $\displaystyle \left(x,\sum_{i=0}^{\infty} \lambda^i A_{c_{i}}(\tau_{i,\bar{a}}(x))\right) \in \Lambda$ and $x'=\tau_{a_0}(x)$. Then, $x=T(x')$, $x'=\tau_{a_0}(T(x'))$ and
$\displaystyle  \left(x,\sum_{i=0}^{\infty} \lambda^i A_{c_{i}}(\tau_{i,\bar{a}}(x))\right)= G_{c_0}\left(x', \sum_{i=0}^{\infty} \lambda^i A_{c_{i+1}}(\tau_{i,\sigma\bar{a}}(x'))\right).$
This means that $(x, y) \in F(\Lambda)$ because $\displaystyle\left(x', \sum_{i=0}^{\infty} \lambda^i A_{c_{i+1}}(\tau_{i,\sigma\bar{a}}(x'))\right) \in\Lambda$.
\end{proof}

\begin{lemma}\label{continuitySlambda}
  The function $S: X\times \mathcal{C}^{\mathbb{N}} \times \mathcal{I}^{\mathbb{N}}\to \mathbb{R}$ given by
$\displaystyle S_{x}(\bar{c},\bar{a})= \sum_{i=0}^{\infty} \lambda^i A_{c_{i}}(\tau_{i,\bar{a}}(x))$
is Lipschitz continuous, more precisely
$$|S_{x}(\bar{c},\bar{a})-S_{x'}(\bar{b},\bar{e})| \leq \frac{2}{2-\lambda} \max_{c \in \mathcal{C}} {\rm Lip}(A_c) d(x,x') +  \frac{2}{1-\lambda} \max_{c \in \mathcal{C}} \|A_c\|_{\infty} \left(d(\bar{c},\bar{b})+ d(\bar{a},\bar{e})\right),$$
where each distance is taken in the respective space.
\end{lemma}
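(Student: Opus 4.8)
The plan is to compare the two values by changing one argument at a time, inserting the triangle inequality
$$|S_x(\bar c,\bar a)-S_{x'}(\bar b,\bar e)| \le |S_x(\bar c,\bar a)-S_{x'}(\bar c,\bar a)| + |S_{x'}(\bar c,\bar a)-S_{x'}(\bar b,\bar a)| + |S_{x'}(\bar b,\bar a)-S_{x'}(\bar b,\bar e)|,$$
so that each summand isolates the dependence on the base point $x$, on the potential-index sequence, and on the branch sequence respectively. Before anything else I would note that each series defining $S$ converges absolutely, since the $i$-th term is bounded by $\lambda^i \max_{c \in \mathcal{C}}\|A_c\|_\infty$ and $\sum_i \lambda^i<\infty$; this justifies all the term-by-term manipulations below.

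For the first summand I would use that each inverse branch $\tau_i$ is a $\tfrac12$-contraction of the quotient metric, so the composition $\tau_{i,\bar a}$ contracts by $2^{-i}$. Combining this with the Lipschitz bound on each $A_{c_i}$ gives, term by term,
$$\lambda^i|A_{c_i}(\tau_{i,\bar a}(x))-A_{c_i}(\tau_{i,\bar a}(x'))| \le \Big(\tfrac{\lambda}{2}\Big)^i \max_{c \in \mathcal{C}}{\rm Lip}(A_c)\,d(x,x'),$$
and summing the geometric series with ratio $\lambda/2$ produces the constant $\tfrac{1}{1-\lambda/2}=\tfrac{2}{2-\lambda}$, which is exactly the first term of the claimed inequality.

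For the two symbolic summands the key observation is where the sequences first disagree. Writing $d(\bar c,\bar b)=\lambda^N$ means $c_i=b_i$ for $i<N$, so all terms with $i<N$ cancel and each remaining one is crudely bounded by $2\max_{c \in \mathcal{C}}\|A_c\|_\infty$ using the sup norm; summing $\sum_{i\ge N}\lambda^i = \lambda^N/(1-\lambda)$ gives the factor $\tfrac{2}{1-\lambda}\max_{c \in \mathcal{C}}\|A_c\|_\infty\,d(\bar c,\bar b)$. The branch summand is analogous, but one must track how $\bar a$ enters through the compositions: if $d(\bar a,\bar e)=\lambda^N$ then $a_i=e_i$ for $i<N$, and since $\tau_{i,\bar a}$ reads off only the coordinates $a_0,\dots,a_{i-1}$, the maps $\tau_{i,\bar a}$ and $\tau_{i,\bar e}$ coincide for all $i\le N$; thus cancellation persists through $i=N$ and the tail starts at $i=N+1$, yielding $\tfrac{2\lambda}{1-\lambda}\max_{c \in \mathcal{C}}\|A_c\|_\infty\,d(\bar a,\bar e)$, which is dominated by the stated $\tfrac{2}{1-\lambda}\max_{c \in \mathcal{C}}\|A_c\|_\infty\,d(\bar a,\bar e)$ since $\lambda<1$. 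Adding the three contributions gives the result.

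The argument is essentially routine; the only point requiring care — and hence the main obstacle — is the bookkeeping in the last paragraph, namely correctly identifying that the index at which $\tau_{i,\bar a}$ and $\tau_{i,\bar e}$ begin to differ is $i=N+1$ rather than $i=N$, because the $i$-th composition uses coordinates $a_0,\dots,a_{i-1}$. Getting this off-by-one wrong would change the constant but not invalidate the stated (non-tight) bound.
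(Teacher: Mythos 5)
The paper states Lemma~\ref{continuitySlambda} without any proof (the text passes directly from the lemma statement to Proposition~\ref{closenesslambda}), so there is no argument of the author's to compare yours against; your proposal in effect supplies the missing proof, and it is correct. The three-step triangle-inequality decomposition isolating $x$, then $\bar c$, then $\bar a$ works exactly as you describe: each inverse branch $\tau_a$ is a $\frac{1}{2}$-contraction of the quotient metric on $X=\mathbb{R}/\mathbb{Z}$, so $\tau_{i,\bar a}$ contracts by $2^{-i}$ and the geometric series $\sum_{i\geq 0}(\lambda/2)^i$ yields precisely the constant $\frac{2}{2-\lambda}$; the paper's shift metric $d(\bar c,\bar b)=\lambda^N$ (first disagreement at index $N$) makes the first $N$ terms of the $\bar c$-difference cancel, giving $\frac{2}{1-\lambda}\max_{c\in\mathcal{C}}\|A_c\|_{\infty}\, d(\bar c,\bar b)$; and your bookkeeping for the branch sequence is right, since $\tau_{i,\bar a}$ depends only on $a_0,\dots,a_{i-1}$, so cancellation persists through $i=N$ and you obtain the slightly sharper constant $\frac{2\lambda}{1-\lambda}$ for the $d(\bar a,\bar e)$ term, which is then absorbed into the stated $\frac{2}{1-\lambda}$. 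One small caveat on your closing remark: an off-by-one error in the \emph{other} direction (starting that tail at $i=N-1$) would produce the constant $\frac{2}{\lambda(1-\lambda)}$, which exceeds the stated one, so the direction of the slip does matter; but your actual computation has the index right, and the lemma's inequality follows as claimed.
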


\begin{proposition}\label{closenesslambda}
  $\Lambda$ is closed, and in particular it is a compact set.
\end{proposition}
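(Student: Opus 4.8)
The plan is to exploit the explicit parametrization of $\Lambda$ furnished by Proposition~\ref{representationlambda} and to realize $\Lambda$ as the continuous image of a compact space. Concretely, I would introduce the map
$$\Phi: X\times \mathcal{C}^{\mathbb{N}} \times \mathcal{I}^{\mathbb{N}} \to X\times\mathbb{R}, \qquad \Phi(x,\bar{c},\bar{a})=\left(x, S_{x}(\bar{c},\bar{a})\right),$$
whose image, by Proposition~\ref{representationlambda}, is exactly $\Lambda$. Reducing closedness of $\Lambda$ to a compactness statement about $\Phi$ is the conceptual heart of the argument.

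First I would observe that the domain $X\times \mathcal{C}^{\mathbb{N}} \times \mathcal{I}^{\mathbb{N}}$ is compact: $X=\textbf{S}^1$ is compact, and each shift space over a finite alphabet is compact in the product topology induced by the metric $d(\bar{a},\bar{b})=\lambda^{n}$ (being complete and totally bounded), so the product of the three factors is compact by Tychonoff. Next, Lemma~\ref{continuitySlambda} gives that $S$ is jointly Lipschitz, hence continuous in $(x,\bar{c},\bar{a})$, so $\Phi$ is continuous. Therefore $\Lambda=\Phi\left(X\times \mathcal{C}^{\mathbb{N}} \times \mathcal{I}^{\mathbb{N}}\right)$ is the continuous image of a compact set, hence compact; and since $X\times\mathbb{R}$ is a Hausdorff metric space, compact subsets are closed. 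This yields both the closedness and the compactness claimed in the statement.

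Equivalently, and avoiding the language of images of maps, one may argue sequentially: given $(x_{n},y_{n})\in\Lambda$ with $(x_{n},y_{n})\to(x,y)$, write $y_{n}=S_{x_{n}}(\bar{c}^{\,n},\bar{a}^{\,n})$; by compactness of the symbol spaces pass to subsequences with $\bar{c}^{\,n}\to\bar{c}$ and $\bar{a}^{\,n}\to\bar{a}$, and invoke the explicit continuity estimate of Lemma~\ref{continuitySlambda} to conclude $S_{x_{n}}(\bar{c}^{\,n},\bar{a}^{\,n})\to S_{x}(\bar{c},\bar{a})$. Then $y=S_{x}(\bar{c},\bar{a})$, so $(x,y)\in\Lambda$ and $\Lambda$ is closed.

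The argument is essentially routine once the parametrization and the continuity of $S$ are in hand; the only point requiring genuine care is the compactness of the symbol spaces $\mathcal{C}^{\mathbb{N}}$ and $\mathcal{I}^{\mathbb{N}}$, which I expect to be the main (and mild) obstacle and which follows from the standard fact that a countable product of finite discrete spaces is compact. Note that boundedness of $\Lambda$ is already guaranteed independently by Theorem~\ref{globalbasin}(c), since $\Lambda\subset U_{0}$; thus the substance of the present statement is precisely the closedness, and compactness then follows immediately.
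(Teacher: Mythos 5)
Your proposal is correct, and in fact it contains the paper's own proof verbatim as your ``equivalently'' paragraph: the paper argues exactly sequentially, taking $(x^k,S_{x^k}(\bar{c}^k,\bar{a}^k))\to(x,y)$, extracting convergent subsequences $(\bar{c}^k,\bar{a}^k)\to(\bar{b},\bar{e})$ by compactness of $\mathcal{C}^{\mathbb{N}}\times\mathcal{I}^{\mathbb{N}}$, and invoking Lemma~\ref{continuitySlambda} to get $y=S_{x}(\bar{b},\bar{e})$, after which it deduces compactness from closedness plus boundedness of $\Lambda$. Your primary formulation—writing $\Lambda=\Phi\left(X\times\mathcal{C}^{\mathbb{N}}\times\mathcal{I}^{\mathbb{N}}\right)$ with $\Phi(x,\bar{c},\bar{a})=(x,S_{x}(\bar{c},\bar{a}))$ continuous on a compact domain—is a genuinely cleaner packaging of the same two ingredients (compactness of the symbol spaces and Lemma~\ref{continuitySlambda}): it delivers compactness outright, and closedness then comes for free because $X\times\mathbb{R}$ is Hausdorff, whereas the paper's route needs the extra (unstated) Heine--Borel-type fact that closed bounded subsets of the cylinder are compact. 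One small caution about your closing remark: invoking Theorem~\ref{globalbasin}(c) for boundedness is, read literally, circular here, since that statement is formulated for self-similar sets already assumed to lie in $\mathcal{K}^{*}(X\times\mathbb{R})$, i.e.\ already compact; this is harmless for you because your main argument never uses boundedness, and in any case boundedness follows directly from the estimate $|S_{x}(\bar{c},\bar{a})|\leq\frac{1}{1-\lambda}\max_{c\in\mathcal{C}}\|A_{c}\|_{\infty}$ rather than from Theorem~\ref{globalbasin}.
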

\begin{proof}
To see that $\overline{\Lambda}= \Lambda$ we take $(x^k, S_{x^k}(\bar{c}^k,\bar{a}^k)) \to (x,y)$, when $k \to \infty$. We notice that $\mathcal{C}^{\mathbb{N}} \times \mathcal{I}^{\mathbb{N}}$ is a compact space thus we can assume that $(\bar{c}^k,\bar{a}^k) \to (\bar{b},\bar{e})$, possibly by taking a subsequence. We claim that $y= S_{x}(\bar{b},\bar{e})$. Indeed, from  Lemma~\ref{continuitySlambda} we know that
$S_{x}(\bar{b},\bar{e})= \lim_{k\to\infty} S_{x^k}(\bar{c}^k,\bar{a}^k) =y.$ Thus, $(x,y)\in \Lambda$. The compactness follow from the fact that $\Lambda$ is bounded. 
\end{proof}

\subsection{Dynamic programming and the boundary of $\Lambda$} \label{dynprogIFS}
The notation and the main results in dynamic programming presented here are from \cite{MR3248091} and the results on discounted limits are from \cite{Cioletti_2019}. We consider a decision-making process $S=\{X, A, \psi, f, u, \delta\}$ given by:
\begin{itemize}
\item[a)] the state space $(X=\textbf{S}^{1}, d)$ is a compact metric space;
\item[b)] the action space $(\mathcal{C}\times \mathcal{I}, d_{\mathcal{C}\times \mathcal{I}})$ is a compact metric space where $\mathcal{C}$(describe the maps on the IFS) and $\mathcal{I}$ (describe the  injective domains of $T$) are  both finite sets;
\item[c)]the action function $\psi(x)=\mathcal{C}\times \mathcal{I}, \; \forall x \in X$;
\item[d)]the dynamics is given by the contractive IFS $f(x,c,a)=\tau_{a}(x)$ for $a \in \mathcal{I}$ (the dynamics does not depends on $c$);
\item[e)]the immediate return is $u(x,c,a)= A_{c}(\tau_{a}(x)), \; c \in \mathcal{C}, \; a \in \mathcal{I}$; where $A_{c}$ is Lipschitz with respect to $x$, uniformly in $c$;
\item[f)] the discount function is linear, $\delta(t)= \lambda t$ for $0< \lambda <1$.
\end{itemize}
Assuming such hypothesis we can show  that for each fixed $0< \lambda <1$ there exists an unique Lipschitz continuous function $v_\lambda^+$  which satisfies the Bellman equation
$\displaystyle  v_\lambda^+(x)=\sup_{(c,a) \in \mathcal{C}\times \mathcal{I}} A_{c}(\tau_{a}(x)) +  \lambda v_\lambda^+(\tau_{a}(x)),$
where $\displaystyle v_\lambda^+(x)=\max_{\bar{c} \in \mathcal{C}^{\mathbb{N}},\; \bar{a} \in \mathcal{I}^{\mathbb{N}}} \sum_{i=0}^{\infty} \lambda^i A_{c_{i}}(\tau_{i,\bar{a}} x)$ is attained for some optimal pair $(\bar{c}, \bar{a})$. The same is true for $\displaystyle  v_\lambda^-(x)=\min_{\bar{c} \in \mathcal{C}^{\mathbb{N}},\; \bar{a} \in \mathcal{I}^{\mathbb{N}}} \sum_{i=0}^{\infty} \lambda^i A_{c_{i}}(\tau_{i,\bar{a}} x)$ but we are interested in maximization thus, we will use just the notation $v_\lambda^+$ and denote $v_{\lambda}$ if there is no risk of misunderstanding.

One can show that $\displaystyle (1-\lambda)\max_{x}v_{\lambda} \to \bar{u}$  and  $\displaystyle v_{\lambda}(x)-\max_{x}v_{\lambda}$  converges up to subsequence, to a continuous function $b$, such that, $\displaystyle b(x)= \max_{(c,a) \in \mathcal{C}\times \mathcal{I}} A_{c}(\tau_{a}(x)) - \bar{u} + b(\tau_{a}(x))$  that  can be rewritten as the calibrated sub-action equation in ergodic optimization (see \cite{MR2422016} and \cite{MR2563132}) or  the analogous of Hamilton-Jacobi's equation for a discrete Lagrangian dynamics (see \cite{MR2422016}, \cite{MR2283105} and \cite{MR2128794})
$$ \bar{u}= \max_{(c,a) \in \mathcal{C}\times \mathcal{I}} A_{c}(\tau_{a}(x)) +  b(\tau_{a}(x)) -b(x)=\max_{(c,a) \in \mathcal{C}\times \mathcal{I}} d_{x}b(a) + A_{c}(\tau_{a}(x)),$$
where $d_{x}b(a)= b(f(x,c,a)) -b(x)= b(\tau_a x) -b(x)$ is the discrete differential. $\bar{u}$ is uniquely determined by $\displaystyle \bar{u}=\displaystyle\sup_{\mu \in \mathcal{H}} \int  A_{c}(\tau_{a}(x)) d\mu(x,c,a),$ where the set $\mathcal{H}$ of holonomic probabilities is $$\mathcal{H}=\left\{ \mu \in Prob(X\times \mathcal{C}\times \mathcal{I}) \; | \; \int d_{x} g (a) d\mu(x,c,a)=0, \; \forall g\in C^0(X,\mathbb{R}) \right\}.$$

The original idea of considering the holonomic measures as tool to study minimizing orbits in the sense of the Lagrangian dynamics is due to Ma\~n\'e, see  \cite{MR1384478}. The purpose was to use larger and more flexible set of measures instead of using the invariant ones. Then, one must use variational properties of such measures to show that the minimizing measures are actually invariant with respect to the Euler-lagrange flow. In the next decades this concept was successfully adapted for discrete Aubry-Mather theory and IFS theory.

\begin{remark}
  We point out that, in the case where the potential $A$ is not changing at each iteration, the set $\mathcal{C}$ is a single point then, $A_{c}(x)=A(x)$ and $\bar{u}=\displaystyle\sup_{\mu \in \mathcal{H}} \int  A_{c}(\tau_{a}(x)) d\mu(x,c,a)=  \sup_{\mu \in \mathcal{H}} \int  d_x A(a) + A(x) d\mu(x,c,a)=\sup_{\mu \in \mathcal{H}} \int  A(x) d\mu(x,c,a)$, because $A_{c}(\tau_{a}(x)) =    A(\tau_{a}(x))  =   A(\tau_{a}(x)) - A(x) + A(x) =  d_x A(a) + A(x)$ and $\int  d_x A(a)  d\mu(x,c,a)=0$. Thus, we recover exactly the classical setting of ergodic optimization for an IFS as in \cite{MR3806667}.
\end{remark}

The next proposition characterizes the boundary of  $\Lambda$.

\begin{proposition}\label{lowerupperboundary}
 For each $(x,y) \in \Lambda$ we have $v_{\lambda}^{-}(x)\leq y\leq v_{\lambda}^{+}(x)$. Moreover, the graphs $\{(x,v_{\lambda}^{-}(x)) \, | \, x \in X\}$ and $\{(x,v_{\lambda}^{+}(x)) \, | \, x \in X\}$ are subsets of $\Lambda$.
\end{proposition}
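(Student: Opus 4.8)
The plan is to read off both assertions directly from the explicit description of $\Lambda$ obtained in Proposition~\ref{representationlambda} together with the definitions of $v_\lambda^{\pm}$. Recall that Proposition~\ref{representationlambda} identifies
$$\Lambda=\left\{\left(x,S_{x}(\bar{c},\bar{a})\right) \; | \;\forall \bar{c} \in \mathcal{C}^{\mathbb{N}},  \;\forall \bar{a} \in  \mathcal{I}^{\mathbb{N}}\right\},$$
where $S_{x}(\bar{c},\bar{a})= \sum_{i=0}^{\infty} \lambda^i A_{c_{i}}(\tau_{i,\bar{a}}(x))$, and that by definition
$$v_\lambda^{+}(x)=\max_{\bar{c},\bar{a}} S_x(\bar{c},\bar{a}), \qquad v_\lambda^{-}(x)=\min_{\bar{c},\bar{a}} S_x(\bar{c},\bar{a}).$$
The whole content of the proposition is the observation that the fiber of $\Lambda$ over a fixed $x$ is precisely the image of the map $(\bar{c},\bar{a})\mapsto S_x(\bar{c},\bar{a})$, whose extreme values are exactly $v_\lambda^{\pm}(x)$.

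For the first inequality, I would take an arbitrary $(x,y)\in\Lambda$. By the representation above there exist $\bar{c}\in\mathcal{C}^{\mathbb{N}}$ and $\bar{a}\in\mathcal{I}^{\mathbb{N}}$ with $y=S_x(\bar{c},\bar{a})$. Since $v_\lambda^{-}(x)$ and $v_\lambda^{+}(x)$ are, respectively, the infimum and supremum of $S_x$ over all such pairs, one gets $v_\lambda^{-}(x)\leq S_x(\bar{c},\bar{a})=y\leq v_\lambda^{+}(x)$ immediately, with no further computation. This gives the containment of $\Lambda$ between the two graphs.

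For the second assertion I would use that the extremum defining $v_\lambda^{+}$ is \emph{attained}: for each fixed $x$ there is an optimal pair $(\bar{c}^{\,*},\bar{a}^{\,*})$ with $v_\lambda^{+}(x)=S_x(\bar{c}^{\,*},\bar{a}^{\,*})$. Then the point $(x,v_\lambda^{+}(x))=(x,S_x(\bar{c}^{\,*},\bar{a}^{\,*}))$ is of the form appearing in the representation of $\Lambda$, hence lies in $\Lambda$; the same argument with a minimizing pair handles $v_\lambda^{-}$. Since $x\in X$ was arbitrary, both graphs are contained in $\Lambda$. The only point requiring care is the attainment of the supremum/infimum, but this is guaranteed by the compactness of $\mathcal{C}^{\mathbb{N}}\times\mathcal{I}^{\mathbb{N}}$ together with the continuity of $S$ in $(\bar{c},\bar{a})$ established in Lemma~\ref{continuitySlambda}, and it is already recorded in the discussion preceding the proposition. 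Consequently I expect no genuine obstacle here: the statement is essentially a restatement of Proposition~\ref{representationlambda} once one observes that $v_\lambda^{\pm}$ are by construction the fiberwise extrema of the same function $S_x$ whose graph is $\Lambda$.
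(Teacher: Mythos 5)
Your proof is correct and follows essentially the same route as the paper: both rest on the representation of $\Lambda$ from Proposition~\ref{representationlambda} together with the fact, recorded in the dynamic programming subsection, that $v_\lambda^{\pm}(x)$ are the attained fiberwise extrema of $(\bar{c},\bar{a})\mapsto S_x(\bar{c},\bar{a})$. The only cosmetic difference is that the paper phrases the attainment as ``picking at each step an infinite backwards orbit'' from the Bellman equation, whereas you justify it by compactness of $\mathcal{C}^{\mathbb{N}}\times\mathcal{I}^{\mathbb{N}}$ and the continuity of $S$ from Lemma~\ref{continuitySlambda}; these are two standard justifications of the same fact, and your write-up actually makes explicit the steps the paper compresses.
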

\begin{proof}
We  determinate the boundaries of $\Lambda$ using dynamic programming.  Evaluating the above equations we can pick at each step an infinite backwards orbit proving that  $(x,v_{\lambda}^{+}(x)) \in \Lambda$. Thus, the graphs of $v_{\lambda}^{+}$ and $v_{\lambda}^{-}$ are subsets of $\Lambda$. As a consequence, $\Lambda \subseteq \{(x,y) \; | \; v_{\lambda}^{-}(x)\leq y\leq v_{\lambda}^{+}(x)\}$, in particular $\Lambda$ will be a Jordan curve only if $v_{\lambda}^{-}(x)= v_{\lambda}^{+}(x)$ for all $x$.
\end{proof}

\subsection{Approximation of $\Lambda$}

As a motivation, we consider a numerical example to provide some insight on what happens when we draw an orbit of $G$.

\begin{example}\label{chaosgametwopotent}
Consider $m =2$, $\lambda=0.48$ and the potentials  $A_0, A_{1}: X \to \mathbb{R}$ given by $A_0(x)=(x-\frac{1}{2})^2$ and $A_1(x)=2x, \; 0\leq x \leq\frac{1}{2} $, $A_1(x)=-2x+2, \; \frac{1}{2}\leq x \leq 1$. Let $R= (X\times\mathbb{R}, G_j(x,y))$ be the IFS given by $G_j(x,y)= (T(x) , A_j (x)+\lambda y)$. In this example we choose an initial point $(x_0, y_0)=(0.2472135954, 0.1)$ and draw its orbit using a chosen sequence $\bar{c}=(c_0, c_1, ...)$.

We consider three different situations:
\begin{itemize}
  \item[a)] When $ \bar{c}=(0, 0, 0, ...)$, then iterate only $G_0(x,y)= (T(x) , A_0 (x)+ 0.48 y)$  beginning in $(x_0, y_0)$.
  \item[b)] When we choose, in a random way, a sequence $\bar{c}=(c_0, c_1, ...) \in \{0,1\}^N$ and iterate $G_{c_i}(x,y)= (T(x) , A_{c_i} (x) + 0.48 y)$  beginning in $(x_0, y_0)$.
  \item[c)] When $\bar{c}=(1, 1, 1, ...)$ and iterate only $G_1(x,y)= (T(x) , A_1 (x)+ 0.48 y)$   beginning in $(x_0, y_0)$.
\end{itemize}
The associated pictures are showed in the Figure~\ref{bb}.
\begin{figure}[!ht]
  \centering
  \includegraphics[width=3cm]{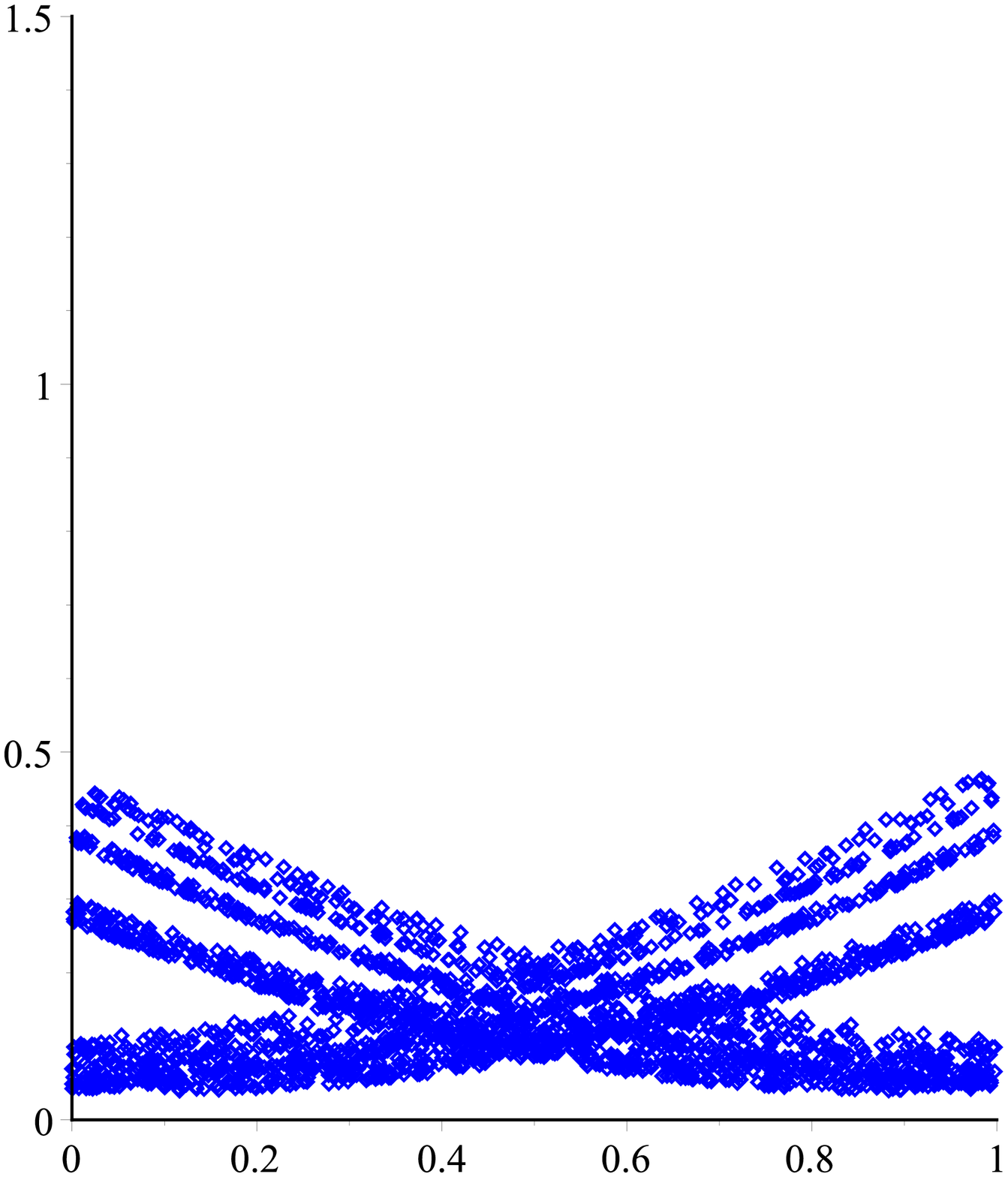} \hspace{1cm}
  \includegraphics[width=3cm]{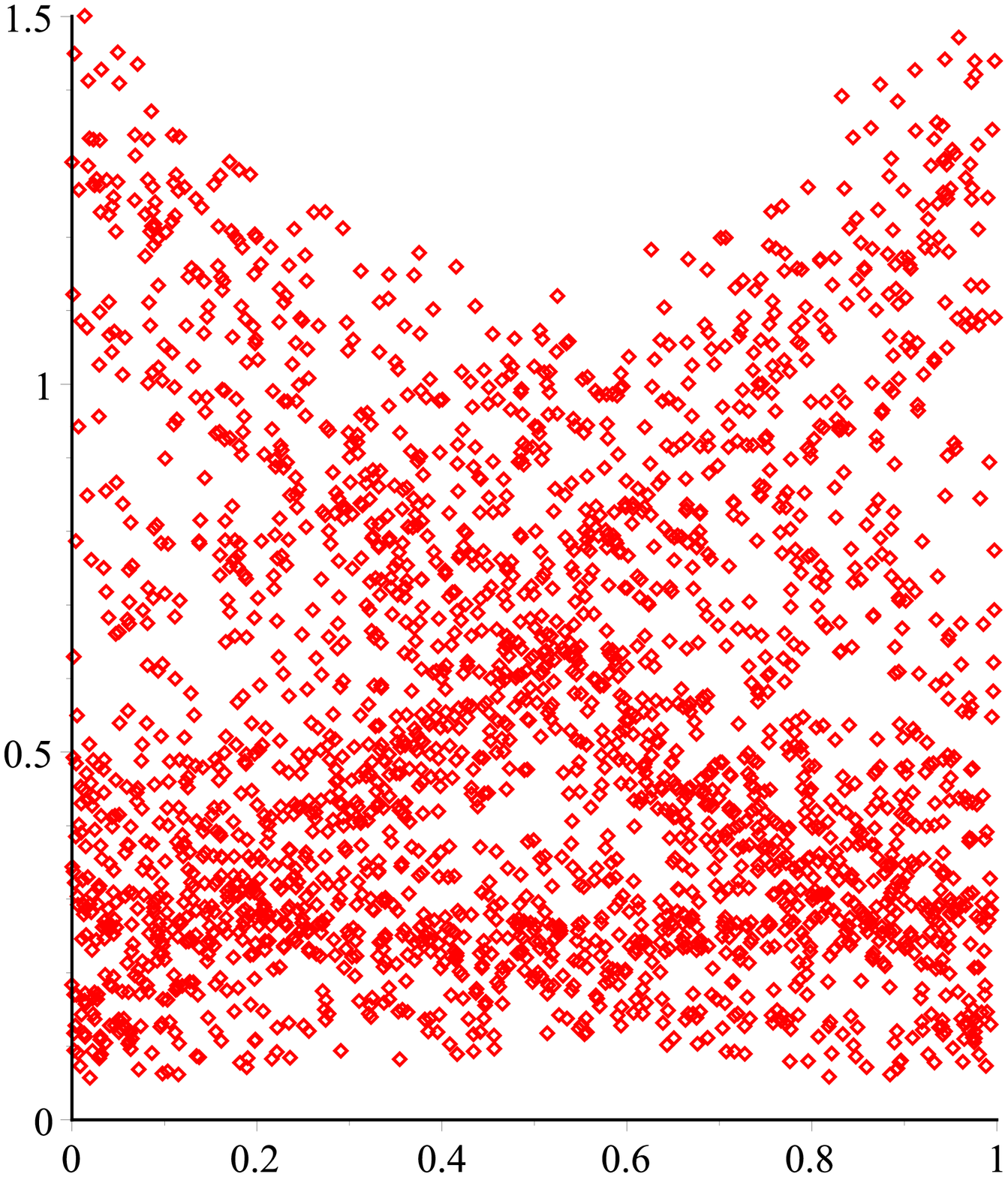}\hspace{1cm}
  \includegraphics[width=3cm]{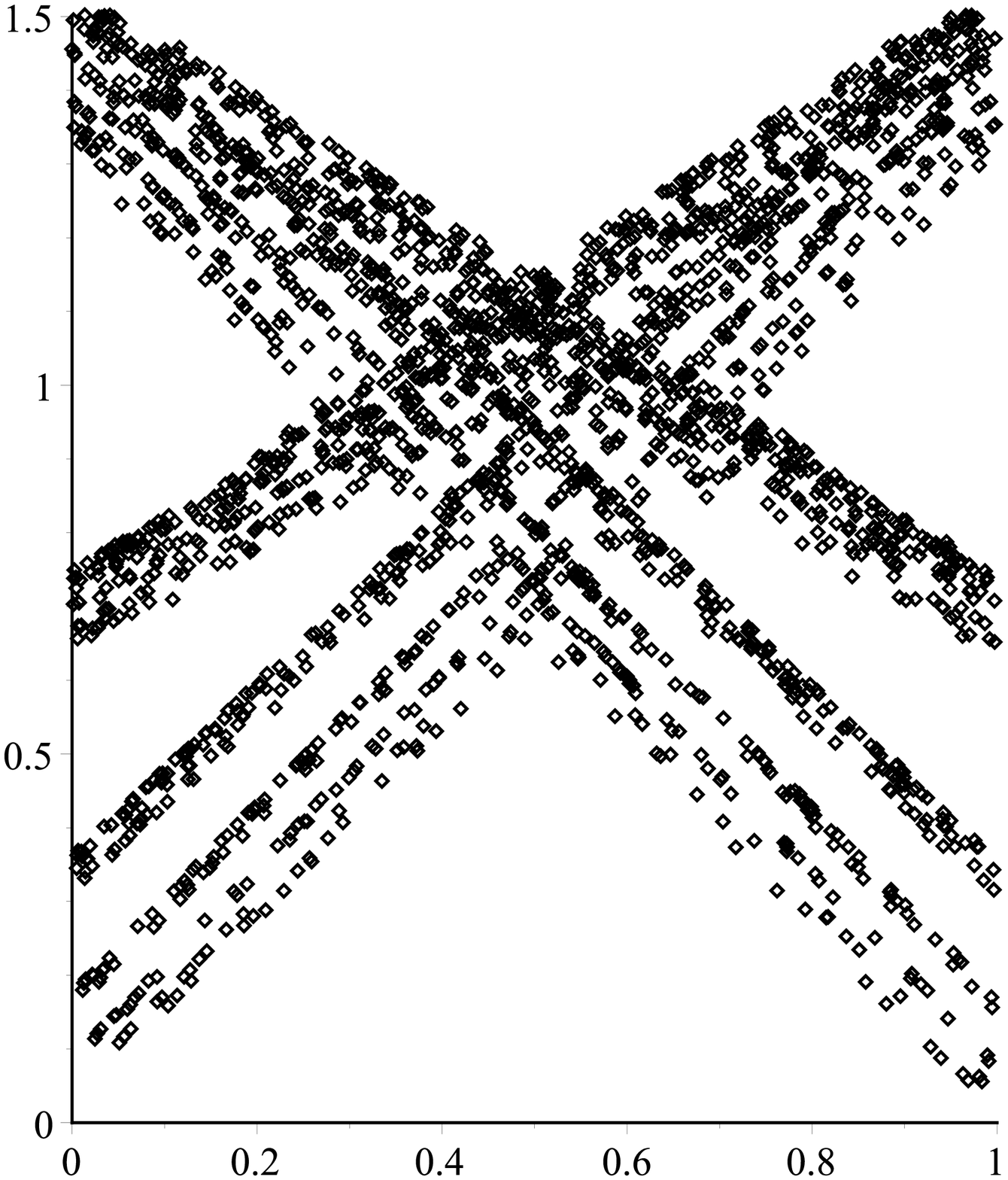}
  \caption{Approximating the attractor: a, b and c, from the left to the right.} \label{bb}
\end{figure}

We can see in (b) that a finite family of potential system(red/center) presents some mixing of the two autonomous (a) and (c) (blue/left and black/right) studied by \cite{MR2201152}. This example makes us to conjecture that closure of a typically orbit of the IFS drawn the picture of the invariant set $\Lambda$.

Finally, in the Figure~\ref{bball} we draw a picture, describing the iteration using a random orbit with 10.000 iterations after the iterate 1.000. We also show the approximations of $v_{0.48}^{+}(x)$(green) and $v_{0.48}^{-}(x)$(yellow) obtained by iteration of the contractive operator  $\displaystyle \mathcal{L}^{+}(f)(x)=\max_{c \in \{0,1\}, a \in \{0,1\}} A_{c }(\tau_{a} x) + 0.48 f(\tau_{a} x),$
(resp. $\mathcal{L}^{-}$) because $\mathcal{L}^{+}(v_{0.48}^{+})=v_{0.48}^{+}$ $\left(\text{resp.}\mathcal{L}^{-}(v_{0.48}^{-})=v_{0.48}^{-}\right)$.
\begin{figure}[!ht]
  \centering
  \includegraphics[width=4cm]{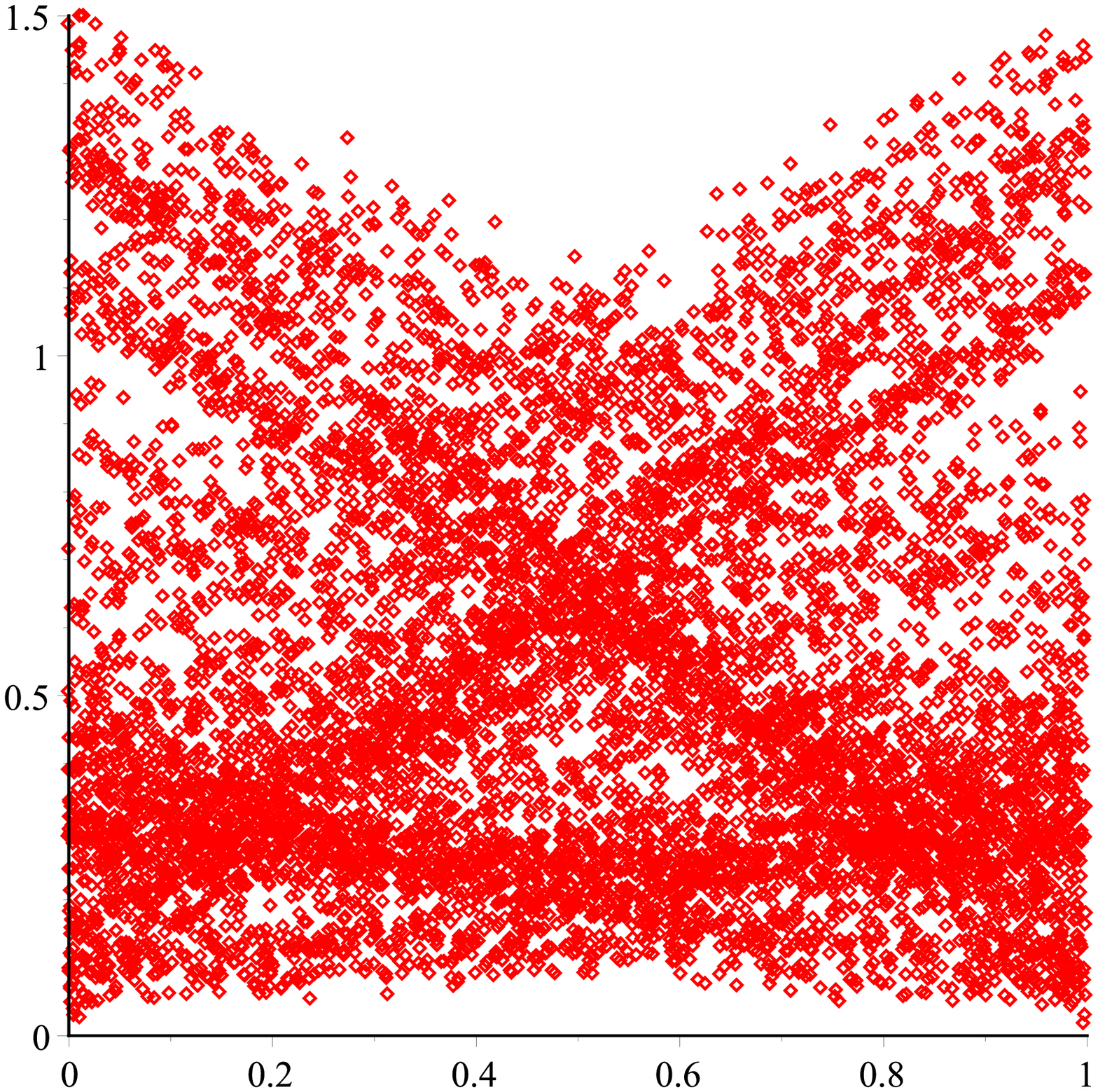}
  \includegraphics[width=4cm]{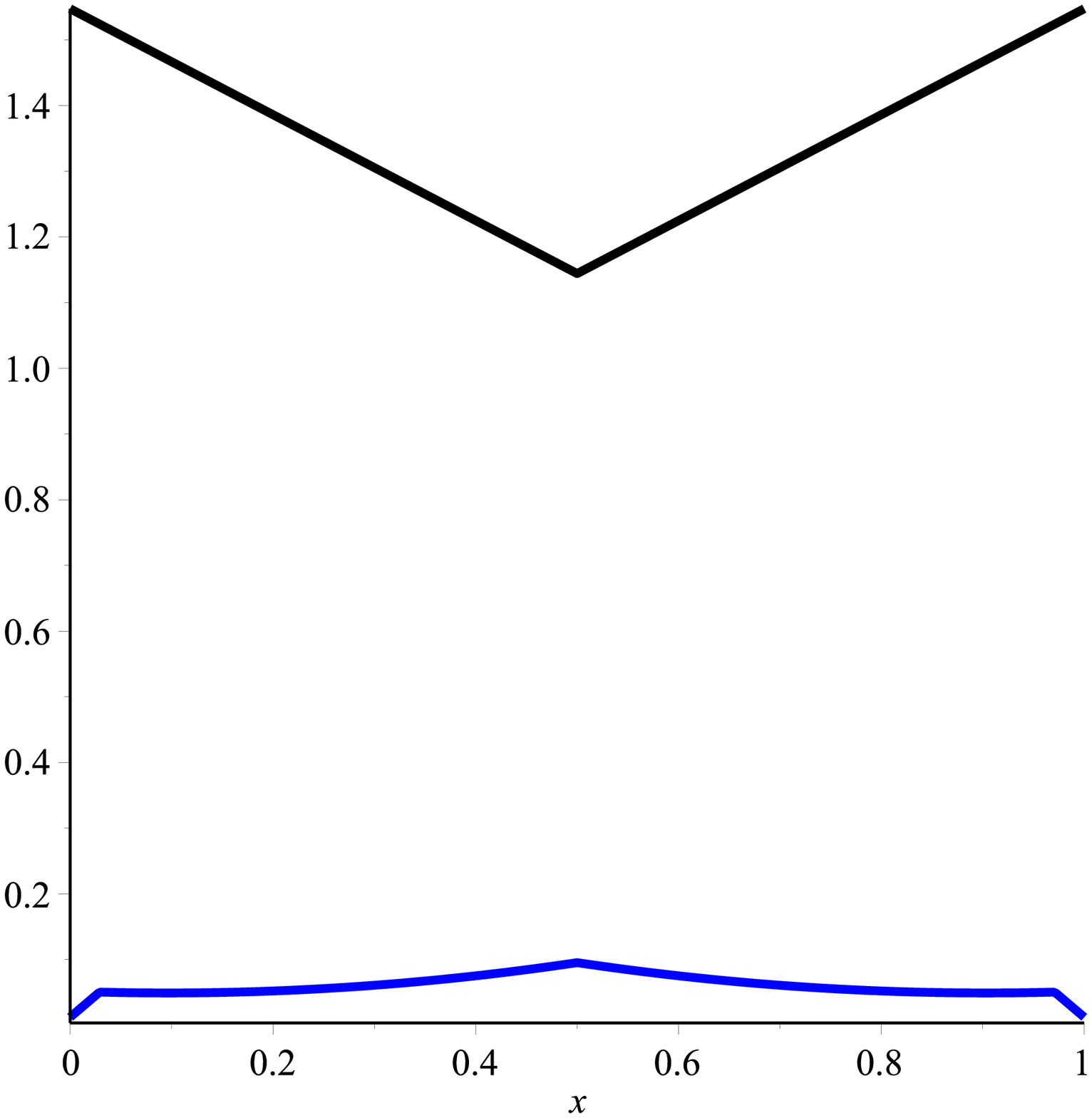}
  \caption{From the left to the right: Approx. of $\Lambda$ and the upper and lower boundary $v_{0.48}^{+}(x)$ and $v_{0.48}^{-}(x)$.} \label{bball}
\end{figure}
\end{example}

In the same setting of the  Example~\ref{chaosgametwopotent}, we consider  a periodic point $x_0=\frac{1}{3}$ and  $(x_0, y_0)=(0.33..., 1.4)$. In the Figure~\ref{cc}, we choose a sequence $\bar{c}=(c_0, c_1, ...) \in \{0,1\}^\mathbb{N}$, iterate $G_{c_i}(x,y)= (T(x) , A_{c_i} (x) + 0.48 y)$ by 2000 times  beginning in $(x_0, y_0)$ and we plot it for $i\geq 500$.\\
  \begin{figure}[!ht]
  \centering
  \includegraphics[width=4cm, height=3cm]{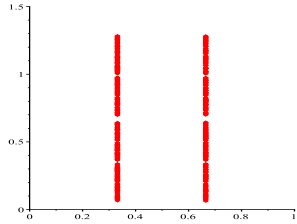}
  \caption{Iteration of the compact $\{(0.33...,1.4)\}$  by $F$.} \label{cc}
\end{figure}

\begin{proposition}\label{Lambdais notattractor}
$\Lambda$ is not an attractor with any basin of attraction $U_0$, in the IFS sense.
\end{proposition}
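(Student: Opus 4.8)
The plan is to exploit the single structural feature that breaks the contractive IFS theory: every map $G_c$ acts on the first coordinate by the \emph{same} expanding map $T$. Consequently, iterating the Hutchinson--Barnsley operator $F$ on a single point forces its whole image to collapse onto one vertical line, whereas $\Lambda$ already spreads over the entire circle in the $x$-direction (its projection onto $X$ is all of $\mathbf{S}^1$, by Proposition~\ref{representationlambda}). This geometric mismatch will prevent Hausdorff convergence and hence contradict the attractor property.

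Concretely, I would argue by contradiction. Suppose $\Lambda$ were an attractor with some basin $U_0$; then $F^n(U) \to \Lambda$ in the Hausdorff metric $d_H$ for every $U \in \mathcal{K}^*(U_0)$. The idea is to test this against the simplest admissible compact set, a singleton: since $U_0$ is a neighborhood of the nonempty set $\Lambda$, I may pick any $p_0 = (x_0, y_0) \in \Lambda \subseteq U_0$ and take $U = \{p_0\}$.

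Next I would compute $F^n(\{p_0\})$ from the closed form of the $n$-th iterate of $G$ recorded before Proposition~\ref{representationlambda}. Because the first coordinate of $G_c(x,y)$ is $T(x)$ regardless of $c$, all $m^n$ points of $F^n(\{p_0\})$ share the identical first coordinate $T^n(x_0)$; only the second coordinate varies with the control word $(c_0,\dots,c_{n-1})$. Thus $F^n(\{p_0\}) \subseteq \{T^n(x_0)\}\times \mathbb{R}$ for every $n$. Invoking $\pi_x(\Lambda)=X$, I then select the point $x^* = T^n(x_0) + \tfrac14 \pmod 1$ together with a corresponding $(x^*,y^*)\in\Lambda$. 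Since the metric on $X\times\mathbb{R}$ dominates the circle distance in the first coordinate, every point of $F^n(\{p_0\})$ lies at distance at least $\tfrac14$ from $(x^*,y^*)$, which yields $d_H(F^n(\{p_0\}),\Lambda)\ge \tfrac14$ for all $n$. This contradicts $F^n(\{p_0\}) \to \Lambda$, and since $U_0$ was arbitrary, $\Lambda$ cannot be an attractor in the IFS sense.

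I do not expect a genuine obstacle: the one point to watch is that the lower bound $\tfrac14$ must be uniform in $n$, which is automatic because the collapse onto a vertical line holds at every iterate and a circle always contains a point at distance $\tfrac14$ from any prescribed point. The conceptual content worth emphasizing is that this failure is structural rather than accidental -- it is driven precisely by the expansion of $T$ in the $x$-direction, the very feature that (as noted after Theorem~\ref{globalbasin}) rules out the classical contractive IFS arguments, even though $U_0$ is forward-invariant and globally absorbing.
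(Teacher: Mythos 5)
Your proof is correct and rests on the same structural observation as the paper's: since every $G_c$ acts on the first coordinate by the same map $T$, the iterates $F^n$ of a singleton remain in a single vertical fiber $\{T^n(x_0)\}\times\mathbb{R}$, while $\Lambda$ projects onto all of $\mathbf{S}^1$, so Hausdorff convergence to $\Lambda$ is impossible. The only difference is in execution: the paper instantiates this with the period-2 point $x_0=1/3$, so that $\pi_x\left(F^n(\{(1/3,y')\})\right)$ alternates between $\{2/3\}$ and $\{1/3\}$, whereas you take an arbitrary $p_0\in\Lambda$ and derive the uniform bound $d_H\left(F^n(\{p_0\}),\Lambda\right)\geq \tfrac14$ — a slightly cleaner finish, since it avoids any appeal to the existence of $\lim_n F^n(\{p_0\})$.
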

\begin{proof}
  Consider an arbitrary point $\{(1/3,y')\}$ and calculate $F^n(\{(1/3,y')\})$. A simple computation shows  that
$$G_{b_{-1}}(1/3,y')= (T(1/3) , A_{b_{-1}} (1/3) + \lambda y')= (2/3 , A_{b_{-1}} (1/3) + \lambda y') $$
and
$$G_{b_{-2}}G_{b_{-1}}(1/3,y')=(T(2/3) , A_{b_{-2}} (2/3)+ \lambda A_{b_{-1}} (1/3) + \lambda^2 y')=$$
$$=(1/3 , A_{b_{-2}} (2/3)+ \lambda A_{b_{-1}} (1/3) + \lambda^2 y').$$
The same behavior occurs for $G_{b_{-n}}\cdots G_{b_{-1}}(1/3,y')$ depending only if $n$ is even or odd.

We can see that $\Lambda$ is not an attractor with basin of attraction $U_0$, in the IFS sense, because $\displaystyle  \displaystyle \pi_{x}\left(\lim_{n \to \infty}F^n(\{(1/3,y')\})\right)=\left\{\frac{1}{3}, \frac{2}{3}\right\},$ thus it is impossible to obtain
$\displaystyle \lim_{n \to \infty} F^n(\{(1/3,y')\}) =\Lambda$. This contradicts the fact that $\{(1/3,y')\}$ is a compact set in the basin of attraction $U_0$.
\end{proof}

\subsection{The dynamic in $\Lambda$ and the random SRB measure}

As $\Lambda$ is a compact subset and $F(\Lambda)=\Lambda$ we can restrict the IFS $R= (X\times\mathbb{R}, G_c(x,y))_{c \in \mathcal{C}}$ to $\Lambda$, that is, $R'= (\Lambda, G_c(x,y))_{c \in \mathcal{C}}$.

We recall that
$\Lambda=\left\{\left(x,S_{x}(\bar{c},\bar{a})\right) \in X\times\mathbb{R} \; | \;\forall \bar{c} \in \mathcal{C}^{\mathbb{N}},  \;\forall \bar{a} \in  \mathcal{I}^{\mathbb{N}}\right\},$
where the function $S: X\times \mathcal{C}^{\mathbb{N}} \times \mathcal{I}^{\mathbb{N}}\to \mathbb{R}$ given by $\displaystyle S_{x}(\bar{c},\bar{a})= \sum_{i=0}^{\infty} \lambda^i A_{c_{i}}(\tau_{i,\bar{a}}(x))$
is Lipschitz continuous  by Lemma~\ref{continuitySlambda}.

We introduce the address function $\pi : X \to \mathcal{I}$ given by $\pi(x)=e \in \mathcal{I} \Leftrightarrow \tau_{e}T(x)=x,$
which is well defined for $T$ because, in each point, there exists a left inverse function.

To study the iteration of points in $\Lambda $ we  compute, for each~\footnote{We use a little bit different indexation for $b$ starting from $\bar{b}=(b_{-1}, b_{-2},...)$ instead $\bar{b}=(b_{0}, b_{1}, ...)$. This is made in order to avoid confusion on the concatenation operation $b_{-1}*\bar{c}=(b_{-1},c_0, c_1,...)$.} $\bar{b}=(b_{-1}, b_{-2}, ...)\in \mathcal{C}^{\mathbb{N}}$ the image
$ G \left(x,S_{x}(\bar{c},\bar{a}), \bar{b}\right)=\left(T(x),S_{T(x)}(b_{-1}*\bar{c},e_{-1}*\bar{a}), \sigma\bar{b}\right)$
where $e_{-1}=\pi(x)\in \mathcal{I}$ (the unique element in $\mathcal{I}$, such that, $\tau_{e_{-1}}T(x)=x$), $b_{-1}*\bar{c}=(b_{-1},c_0, c_1,...)$ and $e_{-1}*\bar{a}=(e_{-1},a_0, a_1,...)$.
For an arbitrary $n$ we get $ G ^n \left(x,S_{x}(\bar{c},\bar{a}), \bar{b}\right)$ as
$\left(T^{n}(x),S_{T^{n}(x)}(b_{-n}\cdots b_{-1}*\bar{c},e_{-n}\cdots e_{-1}*\bar{a}), \sigma^{n}\bar{b}\right)$
where $e_{-i}=\pi(T^{i-1}(x)), i=1,...,n$.

This shows that $ G $ has a simple behavior in $\Lambda$. Therefore is natural to consider a conjugation between $ G $ and the symbolic dynamics. To do that, we define the map $\theta_{b}: X\times \mathcal{I}^{\mathbb{N}} \times \mathcal{C}^{\mathbb{N}} \to X\times \mathcal{I}^{\mathbb{N}} \times \mathcal{C}^{\mathbb{N}}$ by $\theta_{b} (x, \bar{a}, \bar{c})= (T(x), \pi(x)*\bar{a}, b*\bar{c}),$ for any $b \in \mathcal{C}$. We also define the map
$$\theta(x, \bar{a}, \bar{c}, \bar{b})= (\theta_{b_{-1}} (x, \bar{c}, \bar{a}), \sigma(\bar{b}))= (T(x), \pi(x)*\bar{a}, b_{-1}*\bar{c}, \sigma(\bar{b}))$$
and the map $\Psi: X \times \mathcal{I}^{\mathbb{N}}\times \mathcal{C}^{\mathbb{N}} \times \mathcal{C}^{\mathbb{N}}\to X\times \mathbb{R} \times \mathcal{C}^{\mathbb{N}}$ by
$\Psi(x, \bar{a}, \bar{c}, \bar{b})= (x, S_{x}(\bar{c},\bar{a}), \bar{b}).$

\begin{lemma}\label{commut}
  Consider the maps $ G , \Psi, \theta$ above defined. Then, the following properties are true:
  \begin{itemize}
    \item[a)] $S_{T(x)}(b*\bar{c},\pi(x)*\bar{a})= A_{b}(x) + \lambda S_{x}(\bar{c},\bar{a})$,for any $b \in \mathcal{C}$.
    \item[b)] $ G \circ \Psi =\Psi \circ \theta$.
  \end{itemize}
\end{lemma}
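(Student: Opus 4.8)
The plan is to establish part (a) by a direct manipulation of the defining series for $S$, and then to read off part (b) almost immediately by comparing the three coordinates of each composition. First I would expand
$$S_{T(x)}(b*\bar{c},\pi(x)*\bar{a})=\sum_{i=0}^{\infty}\lambda^{i}A_{(b*\bar{c})_{i}}\bigl(\tau_{i,\,\pi(x)*\bar{a}}(T(x))\bigr)$$
and separate the $i=0$ term from the tail. The decisive observation is the defining property of the address function: $\pi(x)$ is the unique symbol with $\tau_{\pi(x)}(T(x))=x$, so the innermost branch of each composition $\tau_{i,\,\pi(x)*\bar{a}}(T(x))$ sends $T(x)$ back to $x$. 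Consequently the $i=0$ term is exactly $A_{b}(x)$ (since $(b*\bar{c})_{0}=b$), while for each $i\geq 1$ the composition collapses, after cancelling $\tau_{\pi(x)}(T(x))=x$, to $\tau_{i-1,\bar{a}}(x)$, with symbol $(b*\bar{c})_{i}=c_{i-1}$.

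Reindexing the tail by $j=i-1$ then gives $\sum_{i\geq 1}\lambda^{i}A_{c_{i-1}}(\tau_{i-1,\bar{a}}(x))=\lambda\sum_{j\geq 0}\lambda^{j}A_{c_{j}}(\tau_{j,\bar{a}}(x))=\lambda\,S_{x}(\bar{c},\bar{a})$, which combined with the detached term yields the claimed identity $S_{T(x)}(b*\bar{c},\pi(x)*\bar{a})=A_{b}(x)+\lambda S_{x}(\bar{c},\bar{a})$. For part (b) I would simply evaluate both compositions on an arbitrary point $(x,\bar{a},\bar{c},\bar{b})$. On one side, $G\circ\Psi$ sends it to $\Psi(x,\bar{a},\bar{c},\bar{b})=(x,S_{x}(\bar{c},\bar{a}),\bar{b})$ and then, applying the fibre map $G_{b_{-1}}$ and shifting, to $(T(x),\,A_{b_{-1}}(x)+\lambda S_{x}(\bar{c},\bar{a}),\,\sigma\bar{b})$. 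On the other side, $\Psi\circ\theta$ sends it to $\Psi(T(x),\pi(x)*\bar{a},b_{-1}*\bar{c},\sigma\bar{b})=(T(x),\,S_{T(x)}(b_{-1}*\bar{c},\pi(x)*\bar{a}),\,\sigma\bar{b})$. The first and third coordinates coincide trivially, and the middle coordinates coincide precisely by part (a) applied with $b=b_{-1}$, so $G\circ\Psi=\Psi\circ\theta$.

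The hard part is entirely in (a), and it is bookkeeping rather than conceptual: one must handle the indexing convention for $\tau_{i,\bar{a}}$ carefully and verify that prepending $\pi(x)$ to $\bar{a}$ exactly compensates for the application of $T$, so that the $i=0$ term detaches as $A_{b}(x)$ (evaluated at $x$, which matches the fibre increment $A_{b_{-1}}(x)$ produced by $G_{b_{-1}}$) and the remaining series telescopes into $\lambda S_{x}(\bar{c},\bar{a})$. Once this single identity is in place, part (b) is purely formal, and it is exactly the conjugacy that makes $G$ on $\Lambda$ a factor of the symbolic system driven by $\theta$.
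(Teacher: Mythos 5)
Your proof is correct and takes essentially the same route as the paper: part (b) is the identical coordinate-wise evaluation of $G\circ\Psi$ and $\Psi\circ\theta$, and your series manipulation for part (a) — detaching the $i=0$ term via $\tau_{\pi(x)}(T(x))=x$ and reindexing the tail — is precisely the computation the paper invokes with the remark ``we already proved that,'' which appears inside the proofs of Propositions~\ref{representationlambda} and~\ref{selfsimilarlambda}. If anything, your write-up is more self-contained, since you prove (a) in full rather than deferring to those earlier computations.
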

\begin{proof}
  (a) We already proved that.\\
  (b) To see that, we compute both sides of the equation
  $$ G \circ \Psi (x, \bar{a}, \bar{c}, \bar{b})=  G (x, S_{x}(\bar{c},\bar{a}), \bar{b})=\left(T(x),S_{T(x)}( b_{-1}*\bar{c},\pi(x)*\bar{a}), \sigma\bar{b}\right)=$$
  $$= \Psi (T(x), \pi(x)*\bar{a}, b_{-1}*\bar{c}, \sigma\bar{b})= \Psi \circ \theta (x, \bar{a}, \bar{c}, \bar{b}).$$
\end{proof}

\begin{theorem}\label{a finite family of sbr}
  Consider $ G , \Psi, \theta$ as in Lemma~\ref{commut}. Recalling that $\mathcal{C}=\{0, ...,m-1\}$ and $\mathcal{I}=\{0, 1\}$, let $\nu$ be the uniform Bernoulli measure on $\mathcal{C}^{\mathbb{N}}$ given by $\nu(j)=\frac{1}{m}$, $\eta$ be the  uniform Bernoulli measure on $\mathcal{I}^{\mathbb{N}}$ given by $\eta(i)=\frac{1}{2}$ and $\ell$ be the normalized Lebesgue measure on $X$. The probability measure $\mu$ in $X\times \mathbb{R} \times \mathcal{C}^{\mathbb{N}}$ given by
  $$\int_{X\times \mathbb{R} \times \mathcal{C}^{\mathbb{N}}} g(x,y,\bar{b}) d\mu(x,y,\bar{b}) = \int_{X\times \mathcal{I}^{\mathbb{N}}\times \mathcal{C}^{\mathbb{N}} \times \mathcal{C}^{\mathbb{N}}} g(\Psi(x, \bar{a}, \bar{c}, \bar{b})) d\ell(x)d\eta(\bar{a})d\nu(\bar{c})d\nu(\bar{b})$$
  that is, $\mu= \Psi(\ell\times\eta\times\nu^2)$ is ergodic with respect to $ G $. In particular, there exists a set $\Omega \subseteq X\times \mathbb{R} \times \mathcal{C}^{\mathbb{N}}$  with $\mu(\Omega)=1$,  such that, for all $(x, y, \bar{b}) \in \Omega$,
  $$\frac{1}{N}\sum_{j=0}^{N-1} g( G ^{j}(x,y,\bar{b})) \to \int_{X\times \mathbb{R} \times \mathcal{C}^{\mathbb{N}}} g(x,y,\bar{b}) d\mu(x,y,\bar{b}),$$
  for all $g \in L^1(\mu)$. This measure is called the \textbf{random SRB} measure for the IFS $R= (X\times\mathbb{R}, G_c(x,y))_{c \in \mathcal{C}}$.
\end{theorem}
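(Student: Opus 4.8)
The plan is to exploit the commutation relation $G \circ \Psi = \Psi \circ \theta$ established in Lemma~\ref{commut}(b), which exhibits $\Psi$ as a measurable semiconjugacy from the model system $(\theta, P)$ to $(G, \mu)$, where $P = \ell \times \eta \times \nu \times \nu$ and $\mu = \Psi_*P$. Since $\Psi$ is continuous (it is Lipschitz in $x$ and continuous in the symbolic variables by Lemma~\ref{continuitySlambda}), $\mu$ is a well-defined Borel probability measure. The whole theorem then reduces to two facts about the model system: that $\theta$ preserves $P$ and that $\theta$ is ergodic. Indeed, once these are known, invariance transfers immediately, $G_*\mu = G_*\Psi_* P = (\Psi\circ\theta)_*P = \Psi_*(\theta_*P) = \Psi_*P = \mu$, and ergodicity transfers by a standard pullback argument: if $B$ is $G$-invariant mod $\mu$, then $\theta^{-1}\Psi^{-1}(B) = (\Psi\circ\theta)^{-1}(B) = (G\circ\Psi)^{-1}(B) = \Psi^{-1}(G^{-1}B) = \Psi^{-1}(B)$ up to a $P$-null set (using $\Psi_*P=\mu$), so $\mu(B) = P(\Psi^{-1}B) \in \{0,1\}$.

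The key observation, and the step I would carry out first, is that $\theta$ is in fact a \emph{direct product} of two maps. Writing $\theta(x,\bar a,\bar c,\bar b)=(T(x),\pi(x)*\bar a, b_{-1}*\bar c,\sigma\bar b)$, the update of $(x,\bar a)$ depends only on $(x,\bar a)$ (through $\pi(x)$) and the update of $(\bar c,\bar b)$ depends only on $(\bar c,\bar b)$ (through the leading symbol $b_{-1}$). Hence, under the identification $(x,\bar a,\bar c,\bar b)\leftrightarrow ((x,\bar a),(\bar c,\bar b))$, we have $\theta = \Theta_1\times\Theta_2$ with $\Theta_1(x,\bar a)=(T(x),\pi(x)*\bar a)$ and $\Theta_2(\bar c,\bar b)=(b_{-1}*\bar c,\sigma\bar b)$, while $P=(\ell\times\eta)\times(\nu\times\nu)$ splits accordingly.

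Next I would identify each factor as a two-sided Bernoulli shift. For $\Theta_1$: encoding $x$ by its binary digits $x=0.x_1x_2\cdots$ and noting $\pi(x)=x_1$, the pair $(x,\bar a)$ is naturally identified with a bi-infinite sequence in $\mathcal I^{\mathbb Z}$, and $\Theta_1$ becomes the left shift, which pushes the digit $x_1$ into the accumulated past $\bar a$. Since the binary digits of a Lebesgue-distributed $x$ are i.i.d. uniform on $\mathcal I$ and $\eta$ is the uniform Bernoulli measure, $\ell\times\eta$ corresponds to the product measure; thus $\Theta_1$ preserves $\ell\times\eta$ and is metrically isomorphic to the $(\tfrac12,\tfrac12)$-Bernoulli shift. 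The identical argument, with $\bar b=(b_{-1},b_{-2},\dots)$ playing the role of the future to be consumed and $\bar c=(c_0,c_1,\dots)$ the accumulated past, shows that $\Theta_2$ preserves $\nu\times\nu$ and is isomorphic to the uniform $(\tfrac1m,\dots,\tfrac1m)$-Bernoulli shift on $\mathcal C^{\mathbb Z}$. Both factors are therefore Bernoulli, hence mixing; consequently $\theta=\Theta_1\times\Theta_2$ preserves $P$ and, being a product of Bernoulli systems, is itself Bernoulli and in particular ergodic.

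Finally, with $(G,\mu)$ now known to be ergodic, the Birkhoff ergodic theorem yields, for each fixed $g\in L^1(\mu)$, a full-measure set on which the time averages $\frac1N\sum_{j=0}^{N-1}g(G^j(x,y,\bar b))$ converge to $\int g\,d\mu$; choosing a countable dense family in $C^0$ of the (compact) support and intersecting the corresponding full-measure sets produces a single $\Omega$ with $\mu(\Omega)=1$ that works simultaneously, giving the stated conclusion. The main obstacle is the ergodicity of the model map $\theta$: everything hinges on recognizing that $\theta$ splits as an honest direct product and that each factor is a Bernoulli natural extension, since a product of two merely ergodic systems need not be ergodic — here it is crucial that each factor is mixing (indeed Bernoulli). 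The transfer of invariance and ergodicity through $\Psi$ and the final appeal to Birkhoff are then routine.
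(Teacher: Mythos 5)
Your proposal is correct and follows essentially the same route as the paper: you split $\theta$ into the very same two factors (Tsujii's natural extension $\theta'(x,\bar a)=(T(x),\pi(x)*\bar a)$ of the doubling map and the two-sided shift $\theta''(\bar c,\bar b)=(b_{-1}*\bar c,\sigma\bar b)$ on $\mathcal{C}^{\mathbb{Z}}$), transfer invariance and ergodicity to $G$ through the semiconjugacy $\Psi$, and conclude with the Birkhoff theorem. The one point where you are more careful than the paper is the ergodicity of the product: the paper simply asserts that $\ell\times\eta\times\nu^2$ is ergodic for $\theta=\theta'\times\theta''$, whereas you correctly note that a product of merely ergodic systems need not be ergodic and close this gap by identifying each factor as a Bernoulli (hence mixing) shift, which is exactly what is needed.
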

\begin{proof}
   The proof is a consequence of the Birkhoff theorem if we write our map in a clever way.
   Consider the map $\theta'(x,\bar{a})= (T(x), \pi(x)*\bar{a})$ introduced by \cite{MR1862809}. We notice that $\ell \times \eta$ is ergodic for $\theta'$.
   We also define $\theta''(\bar{c}, \bar{b})= (b_{-1}*\bar{c}, \sigma(\bar{b}))=((b_{-1}, c_0, c_1, ...), (b_{-2}, b_{-3}, ...))$ that is the two sided-shift in $\mathcal{C}^{\mathbb{Z}}$. It is a known fact that $\nu^2$ is the  uniform Bernoulli measure which is ergodic with respect to the bilateral shift in $\mathbb{Z}$.
   As $\theta=\theta' \times \theta''$ is a factor map, the product $\ell \times \eta \times \nu^2$ is ergodic for $\theta$.
  From Lemma~\ref{commut} we can transfer the ergodicity and the invariance of $\theta$ with respect to $\ell \times \eta \times \nu^2$ to $ G $ with respect to $\mu$. Indeed,
  $$\int_{X\times \mathbb{R} \times \mathcal{C}^{\mathbb{N}}} g\circ G (x,y,\bar{b}) d\mu(x,y,\bar{b})=  \int_{X\times \mathcal{I}^{\mathbb{N}}\times \mathcal{C}^{\mathbb{N}} \times \mathcal{C}^{\mathbb{N}}} g\circ G (\Psi(x, \bar{a}, \bar{c}, \bar{b})) d\ell(x)d\eta(\bar{a})d\nu(\bar{c})d\nu(\bar{b})= $$
  $$= \int_{X\times\mathcal{I}^{\mathbb{N}} \times \mathcal{C}^{\mathbb{N}}\times \mathcal{C}^{\mathbb{N}}} g\circ\Psi(\theta(x, \bar{c}, \bar{a}, \bar{b})) d\ell(x)d\eta(\bar{a})d\nu(\bar{c})d\nu(\bar{b})= \int_{X\times \mathbb{R} \times \mathcal{C}^{\mathbb{N}}} g(x,y,\bar{b}) d\mu(x,y,\bar{b}).$$
  Thus, we can apply the ergodic Birkhoff theorem and the result follows.
\end{proof}

\begin{remark}
  We notice that the measure $\mu$ given by Theorem~\ref{a finite family of sbr}, and consequently the set $\Omega$ depends on $\lambda$ because $\mu= \Psi(\ell\times\eta\times\nu^2)$ and $\Psi(x, \bar{a}, \bar{c}, \bar{b})= (x, S_{x}(\bar{c},\bar{a}), \bar{b}).$ When $\lambda \to 1$ the discounted sum $S_{x}(\bar{c},\bar{a})$ does not converges.
\end{remark}

\begin{corollary}\label{SRB average}
Let $\mu$ be the random SRB measure given by Theorem~\ref{a finite family of sbr}. The following statements are true:
\begin{itemize}
  \item[a)] $\displaystyle \int_{X} v_{\lambda}^{-}(x)d\ell(x)\leq \int_{X\times \mathbb{R} \times \mathcal{C}^{\mathbb{N}}} y\; d\mu(x,y,\bar{b})\leq  \int_{X} v_{\lambda}^{+}(x)d\ell(x) .$ \\
  \item[b)] $\Pi_{2}(x,y,\bar{b})= y \in L^{1}(\mu)$.\\
  \item[c)] There exists a set $\Omega \subseteq X\times \mathbb{R} \times \mathcal{C}^{\mathbb{N}}$ ($\mu(\Omega)=1$),  such that, for all $(x, \bar{b}=(b_{-1},b_{-2},...) ) \in \Pi_{\{1,3\}}\Omega$
$$\frac{1}{N}\sum_{j=1}^{N-1}  A_{b_{-j}} (T^{j-1}(x)) \to (1-\lambda) \int_{X\times \mathbb{R} \times \mathcal{C}^{\mathbb{N}}} y \; d\mu(x,y,\bar{b}).$$
  \item[d)] The limit in (c) can be improved by $\forall \varepsilon>0$, $\exists \lambda_0$,  $\forall \lambda > \lambda_0$ exists a set $\Omega_{\lambda} \subseteq X\times \mathbb{R} \times \mathcal{C}^{\mathbb{N}}$ ($\mu(\Omega_{\lambda})=1$),  such that, for all $(x, \bar{b}=(b_{-1},b_{-2},...) ) \in \Pi_{\{1,3\}}\Omega_{\lambda}$, $\exists N_0$, such that,
$$ \frac{1}{N}\sum_{j=1}^{N-1}  A_{b_{-j}} (T^{j-1}(x)) \leq \bar{u} + \varepsilon,$$
for all $N \geq N_0$, where $\displaystyle \bar{u}=\displaystyle\sup_{\mu \in \mathcal{H}} \int  A_{c}(\tau_{a}(x)) d\mu(x,c,a)$.
\end{itemize}

\end{corollary}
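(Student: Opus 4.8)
The plan is to treat (a) and (b) as immediate consequences of the pushforward description $\mu=\Psi(\ell\times\eta\times\nu^2)$, then obtain (c) via an Abel-summation identity converting the Birkhoff sum of the potentials into the Birkhoff sum of the height coordinate $y$, and finally deduce (d) by combining (c), (a) and the stated discounted limit $(1-\lambda)\max_x v_\lambda\to\bar u$. For (b), since $\Pi_2(\Psi(x,\bar a,\bar c,\bar b))=S_x(\bar c,\bar a)$ and $|S_x(\bar c,\bar a)|\le\frac{1}{1-\lambda}\max_{c}\|A_c\|_\infty$, the function $y$ is bounded, hence in $L^1(\mu)$. For (a), I would use that $v_\lambda^-(x)$ and $v_\lambda^+(x)$ are precisely the minimum and maximum of $S_x(\bar c,\bar a)$ over $(\bar c,\bar a)$, so $v_\lambda^-(x)\le S_x(\bar c,\bar a)\le v_\lambda^+(x)$ pointwise; integrating this chain against $\ell\times\eta\times\nu\times\nu$, and using that $\eta,\nu$ are probabilities while $v_\lambda^\pm$ depend only on $x$, yields (a) because $\int y\,d\mu=\int S_x(\bar c,\bar a)\,d\ell\,d\eta\,d\nu$.

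The core is (c). Writing $G^j(x,y,\bar b)=(x_j,y_j,\bar b^{(j)})$ with $x_j=T^j x$ and $\bar b^{(j)}=\sigma^j\bar b$, Lemma~\ref{commut}(a) (equivalently the recursion defining $G$ on $\Lambda$) gives $y_k=A_{b_{-k}}(T^{k-1}x)+\lambda y_{k-1}$, hence the pointwise identity $A_{b_{-k}}(T^{k-1}x)=y_k-\lambda y_{k-1}$. Summing from $j=1$ to $N-1$ and regrouping the telescoping-type sum, I obtain
\[
\frac1N\sum_{j=1}^{N-1}A_{b_{-j}}(T^{j-1}x)=(1-\lambda)\,\frac1N\sum_{j=1}^{N-2}y_j+\frac{y_{N-1}-\lambda y_0}{N}.
\]
Applying the Birkhoff theorem of Theorem~\ref{a finite family of sbr} to $g=\Pi_2=y$ (legitimate by (b)) on its full-measure set $\Omega$, the Ces\`aro average $\frac1N\sum_{j=1}^{N-2}y_j$ converges to $\int y\,d\mu$, while the boundary term tends to $0$ because $y$ is bounded on $\Lambda$; this gives the limit $(1-\lambda)\int y\,d\mu$. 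Since the left-hand side depends only on $(x,\bar b)$, the convergence descends to the projection $\Pi_{\{1,3\}}\Omega$.

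For (d), by (c) the average converges to $(1-\lambda)\int y\,d\mu$, and by (a) together with $\int v_\lambda^+\,d\ell\le\max_x v_\lambda^+$ one has $(1-\lambda)\int y\,d\mu\le(1-\lambda)\max_x v_\lambda^+$. Using the stated convergence $(1-\lambda)\max_x v_\lambda\to\bar u$, I would fix $\lambda_0$ so that $(1-\lambda)\max_x v_\lambda<\bar u+\varepsilon/2$ for all $\lambda>\lambda_0$; then the limit in (c) is at most $\bar u+\varepsilon/2$, and the convergence established in (c) yields $\frac1N\sum_{j=1}^{N-1}A_{b_{-j}}(T^{j-1}x)\le\bar u+\varepsilon$ for all $N\ge N_0$, taking $\Omega_\lambda=\Omega$.

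I expect the only delicate point to be (c): getting the Abel-summation bookkeeping right, checking that both boundary terms genuinely vanish (which relies on the boundedness of $y$ on $\Lambda$), and justifying that the almost-everywhere statement, originally about the $y$-Birkhoff average, transfers to a statement about the $(x,\bar b)$-dependent potential sum on the projection $\Pi_{\{1,3\}}\Omega$. Parts (a), (b) and (d) are then essentially formal once (c) is in hand.
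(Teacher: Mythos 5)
Your proposal is correct and takes essentially the same route as the paper: parts (a), (b) and (d) coincide with the paper's argument, and for the key part (c) both proofs apply the ergodic theorem of Theorem~\ref{a finite family of sbr} to $g=\Pi_{2}$ and then convert the Birkhoff average of $y$ into the Birkhoff average of the potentials by summation by parts, with vanishing boundary terms justified by the boundedness of $y$ on $\Lambda$. The only difference is cosmetic: the paper unrolls $y^{n}=\lambda^{n}y+\sum_{j=1}^{n}\lambda^{n-j}A_{b_{-j}}(T^{j-1}(x))$ and exchanges the order of a double sum, while your telescoping identity from $A_{b_{-k}}(T^{k-1}x)=y_{k}-\lambda y_{k-1}$ reaches the same identity slightly more cleanly.
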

\begin{proof}

(a)-(b) We claim that $g \in L^{1}(\mu)$ where  $g(x,y,\bar{b})=\Pi_{2}(x,y,\bar{b})=y$. To see this we recall that
 $$\int_{X\times \mathbb{R} \times \mathcal{C}^{\mathbb{N}}} \Pi_{2}(x,y,\bar{b}) d\mu(x,y,\bar{b})=  \int_{X\times \mathcal{I}^{\mathbb{N}}\times \mathcal{C}^{\mathbb{N}} \times \mathcal{C}^{\mathbb{N}}} \Pi_{2}(\Psi(x, \bar{a}, \bar{c}, \bar{b})) d\ell(x)d\eta(\bar{a})d\nu(\bar{c})d\nu(\bar{b})= $$
 $$= \int_{X\times \mathcal{I}^{\mathbb{N}}\times \mathcal{C}^{\mathbb{N}} \times \mathcal{C}^{\mathbb{N}}} S_{x}(\bar{c},\bar{a}) d\ell(x)d\eta(\bar{a})d\nu(\bar{c})d\nu(\bar{b}) \in \left[\int_{X} v_{\lambda}^{-}(x)d\ell(x), \;  \int_{X} v_{\lambda}^{+}(x)d\ell(x)\right],$$
 because $v_{\lambda}^{-}(x)\leq  S_{x}(\bar{c},\bar{a}) \leq v_{\lambda}^{+}(x)$ from Proposition~\ref{lowerupperboundary}.

 (c) Using Theorem~\ref{a finite family of sbr} we get
 $$\frac{1}{N}\sum_{n=0}^{N-1} \Pi_{2}( G ^{n}(x,y,\bar{b})) \to \int_{X\times \mathbb{R} \times \mathcal{C}^{\mathbb{N}}} \Pi_{2}(x,y,\bar{b}) d\mu(x,y,\bar{b}),$$
where $\Pi_{2}\circ G ^n \left(x,y,\bar{b}\right)=\Pi_{2}\left(T^{n}(x),y^n, \sigma^{n}\bar{b}\right)=y^n.$
We notice that $y^0=y$ and for $n \geq 1$ we have the formula
$\displaystyle y^n = \lambda^n y + \sum_{j=1}^{n}\lambda^{n-j} A_{b_{-j}} (T^{j-1}(x)),$
so
$$\frac{1}{N}\sum_{n=0}^{N-1} \Pi_{2}( G ^{n}(x,y,\bar{b}))= \frac{y}{N} + \frac{1}{N}\sum_{n=1}^{N-1} y^n = \frac{y}{N} +  \frac{1}{N}\sum_{n=1}^{N-1} \left[\lambda^n y + \sum_{j=1}^{n}\lambda^{n-j} A_{b_{-j}} (T^{j-1}(x))\right]=$$
$$= y \frac{1}{N}\sum_{n=0}^{N-1}\lambda^n +  \frac{1}{N}\sum_{n=1}^{N-1} \left[ \sum_{j=1}^{n}\lambda^{n-j} A_{b_{-j}} (T^{j-1}(x))\right].$$
As $\displaystyle y \frac{1}{N}\sum_{n=0}^{N-1}\lambda^n  \to 0$
for any $y$, we obtain
$$\frac{1}{N}\sum_{n=1}^{N-1} \left[ \sum_{j=1}^{n}\lambda^{n-j} A_{b_{-j}} (T^{j-1}(x))\right] \to \int_{X\times \mathbb{R} \times \mathcal{C}^{\mathbb{N}}} y d\mu(x,y,\bar{b}).$$

Rewriting the above sum we obtain
$$\frac{1}{N}\sum_{n=1}^{N-1} \left[ \sum_{j=1}^{n}\lambda^{n-j} A_{b_{-j}} (T^{j-1}(x))\right]=
\frac{1}{N}\sum_{j=1}^{N-1}\left[\sum_{i=0}^{N-1-j}\lambda^{i}\right] A_{b_{-j}} (T^{j-1}(x))=$$
$$=\frac{1}{N}\sum_{j=1}^{N-1}\left[\frac{\lambda^{N-j} -1}{\lambda -1}\right] A_{b_{-j}} (T^{j-1}(x))= $$ $$=\frac{1}{N(\lambda -1)}\sum_{j=1}^{N-1}\lambda^{N-j} A_{b_{-j}} (T^{j-1}(x)) - \frac{1}{N(\lambda -1)}\sum_{j=1}^{N-1}  A_{b_{-j}} (T^{j-1}(x)).$$

We notice that $|A_{b_{-j}} (T^{j-1}(x))| \leq \|A_{c}\|_{\infty}$ thus
$\lim_{N \to \infty}\frac{1}{N(\lambda -1)}\sum_{j=1}^{N-1}\lambda^{N-j} A_{b_{-j}} (T^{j-1}(x))=0.$

Using this, we conclude that
$\frac{1}{N}\sum_{j=1}^{N-1}  A_{b_{-j}} (T^{j-1}(x)) \to (1-\lambda) \int_{X\times \mathbb{R} \times \mathcal{C}^{\mathbb{N}}} y d\mu(x,y,\bar{b}).$

(d) From (c) we obtain a set $\Omega_{\lambda} \subseteq X\times \mathbb{R} \times \mathcal{C}^{\mathbb{N}}$ ($\mu(\Omega_{\lambda})=1$),  such that, for all $(x, \bar{b}=(b_{-1},b_{-2},...) ) \in \Pi_{\{1,3\}}(\Omega_{\lambda})$
$$\frac{1}{N}\sum_{j=1}^{N-1}  A_{b_{-j}} (T^{j-1}(x)) \to (1-\lambda) \int_{X\times \mathbb{R} \times \mathcal{C}^{\mathbb{N}}} y d\mu(x,y,\bar{b}).$$

On the other hand, we know that $(1-\lambda)\max v_{\lambda}^{+} \to \bar{u} $, where $\bar{u}$ is the maximum of the integrals of $(x,c, a) \to A_{c}(\tau_{a}(x))$, for all the holonomic measures over $X\times \mathcal{C} \times \mathcal{I}$. Therefore, for any $\varepsilon>0$ there exists $\lambda_0 \in(0,1)$, such that, for all $\lambda > \lambda_0$ we have $(1-\lambda) \max v_{\lambda}^{+}  \leq \bar{u} +\varepsilon$.

Thanks to the item (a), we notice that
$$\int_{X\times \mathbb{R} \times \mathcal{C}^{\mathbb{N}}} y \, d\mu(x, y, \bar{b})\leq \int_{X} v_{\lambda}^{+}(x)d\ell(x) \leq \max v_{\lambda}^{+}.$$
Multiplying the above inequality by $(1-\lambda)$ and taking limits we get $N_0 \in \mathbb{N}$, such that,
$$\displaystyle\frac{1}{N}\sum_{j=1}^{N-1}  A_{b_{-j}} (T^{j-1}(x)) \leq \bar{u} +\varepsilon,$$
for all $N >N_0$.
\end{proof}

\subsection{The associated ergodic optimization problem}
In ergodic optimization the central problem is to find the maximum of the integral of a potential function $A: X \to \mathbb{R}$ with respect to any probability measure $\mu$ which is invariant under a dynamic $T: X \to X$, that is,
$\displaystyle  m_{T}(A)=\sup_{T(\mu)=\mu} \int_{X} A(x) d\mu(x).$
Obviously, this problem has very different solutions depending strongly on $A$ and on $T$. Moreover, the Birkhoff theorem tells us that invariant measures can be understood using empirical averages, that is,
$\displaystyle \frac{1}{N}\sum_{i=0}^{N-1} A(T^{i}(x)) \to \hat{A}(x)$
a.e. and $\int_{X} \hat{A}(x) d\mu(x)=\int_{X} A(x) d\mu(x)$. In particular, $\hat{A}(x) =\int_{X} A(x) d\mu(x)$ a.e., when $\mu$ is ergodic.

In physical applications or optimal controlling problems these  empirical averages represent evaluations of an observable information along of the dynamical trajectory $x_0=x, x_1=T(x), x_2=T^2(x), ...$. So, it is very natural to suppose that the value $A$ can be influenced by some noise producing a range of possible variations $A_{c}: X \to \mathbb{R}$ for $c \in \mathcal{C}$. In such case, the empirical averages will be controlled by a sequence $(c_0, c_1,...)  \in \mathcal{C}^{\mathbb{N}}$:
$\displaystyle\frac{1}{N}\sum_{i=0}^{N-1} A_{c_{i}}(T^{i}(x)).$

From Theorem~\ref{a finite family of sbr} we can give an immediate solution for this problem for $X=\textbf{S}^{1}$. Is easy to see that if $g(x,y,\bar{b})= A_{b_{-1}}(x) \in L^1(\mu)$, then
there exists a set $\Omega \subseteq X\times \mathbb{R} \times \mathcal{C}^{\mathbb{N}}$,  with $\mu(\Omega)=1$,  such that, for all $(x, y, \bar{b}) \in \Omega$,
$$\frac{1}{N}\sum_{j=1}^{N-1} A_{b_{-j}}(T^{j-1}(x)) \to \int_{X\times \mathbb{R} \times \mathcal{C}^{\mathbb{N}}} A_{b_{-1}}(x) d\mu(x,y,\bar{b})=$$ $$=\int_{\mathcal{C}^{\mathbb{N}}}\left(\int_{X} A_{b_{-1}}(x) d\ell(x)\right) d\nu(\bar{b})=\int_{X} \frac{1}{m}\sum_{c=0}^{m-1} A_{c}(x) d\ell(x).$$

\section{IFS Ergodic optimization for a finite family of potentials} \label{IFS ergodic optimization problem for a finite family of potentials}
In this section we formulate the IFS ergodic optimization problem for a finite family of potentials. Our results naturally generalizes the theory of  IFS ergodic optimization developed in \cite{MR2461833}.

As usual, the endomorphism dynamics $x_0=x, x_1=T(x), x_2=T^2(x), ...$ can be replaced by an IFS dynamics $(X, \tau_{i})_{i \in \mathcal{I}}$, that is, $x_0=x, x_1=\tau_{a_0}(x_0), x_2=\tau_{a_1}(x_1), ..., x_{i+1}=\tau_{a_i}(x_i), ...$. In this way the empirical averages
$\displaystyle\frac{1}{N}\sum_{i=0}^{N-1} A_{c_{i}}(\tau_{a_i}(x_i)),$
will be controlled by a sequence $(c_0,a_0, c_1,a_1...)  \in (\mathcal{C}\times \mathcal{I})^{\mathbb{N}}$.

We want to investigate the variational problem associated to these averages because it is closely related to the behavior of our skew product IFS when the discount parameter $\lambda$ is close to 1. We call this ``\textbf{the IFS ergodic optimization for a finite family of potentials}", $A_{c}: X \to \mathbb{R}$, for $c \in \mathcal{C}$. To do that, we need to define what is the appropriated notion of invariant measure and a variational formulation.

\begin{quote}
\emph{\textbf{Given a finite set of potentials $A_{c}: X \to \mathbb{R}$ for $c \in \mathcal{C}$ and an IFS $R=(X, \tau_a)_{a \in \mathcal{I}}$ we want to find the value
$\displaystyle m(R)=\sup_{\mu \in \mathcal{H}} \int  A_{c}(\tau_a x) d\mu(x,c,a),$
over the set of holonomic probabilities $$\mathcal{H}=\left\{ \mu \in Prob(X\times\mathcal{C}\times\mathcal{I}) \; | \; \int d_{x} g (a) d\mu(x,c,a)=0, \; \forall g\in C^0(X,\mathbb{R})  \right\},$$ and the optimal measures $\mu_{\infty} \in \mathcal{H}$, such that, $m(R)= \int  A_{c}(\tau_a x) d\mu_{\infty}(x,c,a)$. Recall that $d_{x} g(a)=g\left(\tau_{a} x\right)-g(x)$.}}
\end{quote}

In the following we are going to show that $\mathcal{H}$ is not empty.

\subsection{Empirical holonomic measures}
There exist several results characterizing holonomic measures in different  contexts. For instance, in \cite{MR2765475} Proposition 3.4, in the context of Discrete Aubry-Mather theory, a complete description of the set of holonomic probabilities is given. One can found similar results for the continuous  Aubry-Mather theory. Although we can not expect the same results here because in both cases the holonomy is free
$\int_{\mathbb{T}^{d} \times \mathbb{R}^{d}} \phi(x+\tau v) \mathrm{d} \mu(x, v)=\int_{\mathbb{T}^{d} \times \mathbb{R}^{d}} \phi(x) \mathrm{d} \mu(x, v)
 $ (see \cite{MR2765475} Definition 3.1) meaning that the action $(x,v) \to x+\tau v$ span all the torus. In the IFS case the action $(x,a) \to \tau_{a} x$ is strictly bounded by the behaviour of each map $\tau_{a}$ and could be a very small fractal subset of $X$. Since we can easily find a not $\hat{\sigma}$-invariant measure whose projection is holonomic (see \cite{MR2461833} Section 4 for example), not all holonomic measure is necessarily the projection of a invariant  one for the map $(x,\bar{c},\bar{a}) \stackrel{\hat{\sigma}}{\to} (\tau_{a_0} x,\sigma \bar{c}, \sigma  \bar{a})$. For such measures the best description is via disintegration, characterizing each holonomic measure as a measure whose the marginal with respect to $x$ is the Markov measure associated to the transfer operator of a particular IFS with probabilities obtained by the conditional measures of the disintegration, see \cite{MR2461833} Section 3 and others for a construction. Therefore, we can not use the Birkhoff theorem to characterize the holonomic measures. However, we can get some similar results introducing the set of empirical holonomic measures.

We consider, for each $n \in \mathbb{N}$, $\bar{c}\in \mathcal{C}^{\mathbb{N}}$ and $\bar{a}\in \mathcal{I}^{\mathbb{N}}$ the empirical measure $\mu_{\bar{c},\bar{a},x_0}^{n}$ given by
$$\int g(x,c,a) d\mu_{\bar{c},\bar{a},x_0}^{n}= \frac{1}{n}\sum_{i=0}^{n-1} g(x_i,c_i,a_i)$$
where $x_i$ is the iteration by the IFS $R$ of some point $x_0$, controlled by $\bar{a}$.

We define
\[
\mathcal{H}_{0}
\equiv
\left\{
\mu\, \left|
\begin{array}{l}
\ \mu\ \text{is a cluster point, in the weak-* topology,}\\
\text{of the sequence}\ (\mu_{\bar{c},\bar{a},x_0}^{n})_{n\in\mathbb{N}}
\end{array}
\right.
\right\}
\]
the set of all the empirical holonomic measures.

One can easily show that the projections of $\hat{\sigma}$-invariant measures are empirical holonomic measures. Indeed, the map $\hat{\sigma}(x,\bar{c},\bar{a})=(\tau_{a_0} x,\sigma \bar{c}, \sigma  \bar{a})$ is continuous and $X\times\mathcal{C}^{\mathbb{N}}\times\mathcal{I}^{\mathbb{N}}$ is compact, thus  exist invariant measures. Moreover, if $M(\hat{\sigma})$ is the set of invariant measures for $\hat{\sigma}$, then $\rm{ex}(M(\hat{\sigma}))$, the set of ergodic measures with respect to $\hat{\sigma}$, is not empty. It is easy to see that, for any $\xi \in M(\hat{\sigma})$, the push-forward $\Pi^{\sharp}(\xi) \in Prob(X\times\mathcal{C}\times\mathcal{I})$  given by
$\int g(x,c,a) d\Pi^{\sharp}(\xi)= \int g(\Pi(x,\bar{c},\bar{a})) d\xi$, where $\Pi(x,\bar{c},\bar{a})=(x, c_0, a_0)$, is holonomic (by integrating $h(x,\bar{c},\bar{a})=w(x)$ and using the invariance of $\hat{\sigma}$).

Additionally, if $\xi \in M(\hat{\sigma})$ ergodic, then we get, from the Birkhoff theorem, that there exists a set $\Omega \subset X\times\mathcal{C}^{\mathbb{N}}\times\mathcal{I}^{\mathbb{N}}$, with $\xi(\Omega)=1$ such that, for all $(x_{0},\bar{c},\bar{a}) \in \Omega$
$$\frac{1}{n}\sum_{i=0}^{n-1} g(\Pi(\hat{\sigma}^i(x_{0},\bar{c},\bar{a}))) \to \int g(\Pi(x,\bar{c},\bar{a})) d\xi=\int g(x,c,a) d\Pi^{\sharp}(\xi)$$
for all $g\in C^0( X\times\mathcal{C}\times\mathcal{I} , \mathbb{R})$.
Since  $\frac{1}{n}\sum_{i=0}^{n-1} g(\Pi(\hat{\sigma}^i(x_{0},\bar{c},\bar{a})))=\int g(x,c,a) d\mu_{\bar{c},\bar{a},x_0}^{n}$ we conclude that $\Pi^{\sharp}(\xi) \in \mathcal{H}_{0}$, because it is a cluster point. Therefore, $\Pi^{\sharp}(\rm{ex}(M(\hat{\sigma}))) \subseteq \mathcal{H}_{0}$. As $\Pi^{\sharp}$ is a linear operator, we get $\Pi^{\sharp}(M(\hat{\sigma})) \subseteq \mathcal{H}_{0}$.\\

The next theorem shows that we can solve the IFS ergodic optimization problem in $\mathcal{H}_{0}$ but we do not know if there exist other solutions in $\mathcal{H}/\mathcal{H}_{0}$.
\begin{theorem}\label{empirical holonomic} The following properties are true:
\begin{itemize}
\item[a)] $\mathcal{H}_{0} \subset \mathcal{H}$.
\item[b)] $\int A_{c}(\tau_{a}(x)) d\mu  \leq \bar{u}$ for all $\mu \in \mathcal{H}_{0}$.
\item[c)] If $\bar{c}$ and $\bar{a}$ are optimal in $x_0$ and $\mu_{\bar{c},\bar{a},x_0}^{n} \rightharpoonup \mu_{\infty}$ ( up to subsequence $n_k \to \infty$) then $\int A_{c}(\tau_{a}(x)) d\mu_{\infty}  = \bar{u}$.
\item[d)] $\displaystyle \sup_{\mu \in \mathcal{H}_{0}} \int  A_{c}(\tau_x) d\mu(x,c,a)= \bar{u}.$
\item[e)] $\bar{u}= m(R)$.
\end{itemize}
\end{theorem}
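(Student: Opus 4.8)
The plan is to build everything on two ingredients that are already available: a telescoping identity for the empirical measures, and the continuous calibrated sub-action $b$ produced by the discounting limit in Subsection~\ref{dynprogIFS}. I would prove the five items in the stated order, since each feeds the next. For (a), I would compute the holonomy defect of an empirical measure directly. Because the test function $d_x g(a)=g(\tau_a x)-g(x)$ evaluated along the orbit reads $g(\tau_{a_i}x_i)-g(x_i)=g(x_{i+1})-g(x_i)$, the sum telescopes and $\int d_x g(a)\,d\mu_{\bar c,\bar a,x_0}^{n}=\frac{1}{n}\bigl(g(x_n)-g(x_0)\bigr)$, which tends to $0$ as $n\to\infty$ for each fixed $g\in C^0(X,\mathbb{R})$ since $g$ is bounded on the compact $X$. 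As $(x,c,a)\mapsto g(\tau_a x)-g(x)$ is continuous on the compact space $X\times\mathcal C\times\mathcal I$, passing to any weak-$*$ cluster point gives $\int d_x g(a)\,d\mu=0$ for every $g$, i.e. $\mu\in\mathcal H$; this is $\mathcal H_0\subset\mathcal H$.

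The calibration equation $\bar u=\max_{(c,a)}\bigl[A_c(\tau_a x)+b(\tau_a x)-b(x)\bigr]$ then drives (b) and (c). For (b) it yields the pointwise bound $A_c(\tau_a x)\le \bar u-\bigl(b(\tau_a x)-b(x)\bigr)=\bar u-d_x b(a)$; integrating against any $\mu\in\mathcal H_0\subset\mathcal H$ and using holonomy to kill $\int d_x b(a)\,d\mu=0$ gives $\int A_c(\tau_a x)\,d\mu\le\bar u$. For (c) I would use optimality of $(\bar c,\bar a)$ at $x_0$ to guarantee that the calibration equation is saturated along the generated orbit $x_{i+1}=\tau_{a_i}x_i$, so that $A_{c_i}(\tau_{a_i}x_i)=\bar u-\bigl(b(x_{i+1})-b(x_i)\bigr)$; summing telescopes to $\int A_c(\tau_a x)\,d\mu_{\bar c,\bar a,x_0}^{n}=\bar u-\frac{1}{n}\bigl(b(x_n)-b(x_0)\bigr)\to\bar u$, and weak-$*$ convergence along the subsequence together with continuity of the integrand forces $\int A_c(\tau_a x)\,d\mu_{\infty}=\bar u$.

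Items (d) and (e) are then short. Part (d) combines the upper bound (b) with the attained value (c): the cluster point $\mu_\infty$ exists by weak-$*$ compactness of $Prob(X\times\mathcal C\times\mathcal I)$, so $\sup_{\mu\in\mathcal H_0}\int A_c(\tau_a x)\,d\mu=\bar u$. For (e) the same pointwise calibration bound, now integrated against an \emph{arbitrary} $\mu\in\mathcal H$, gives $m(R)=\sup_{\mu\in\mathcal H}\int A_c(\tau_a x)\,d\mu\le\bar u$, while (d) together with $\mathcal H_0\subset\mathcal H$ gives $m(R)\ge\sup_{\mu\in\mathcal H_0}\int A_c(\tau_a x)\,d\mu=\bar u$; hence $\bar u=m(R)$.

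The one genuinely nontrivial ingredient, and the step I expect to be the crux, is the existence of the continuous calibrated sub-action $b$ satisfying the exact identity above: it is precisely what makes the upper bound sharp over all of $\mathcal H$ (not merely over $\mathcal H_0$) and what allows (c) to reach $\bar u$ exactly. This is supplied by the discounting limit $v_\lambda-\max_x v_\lambda\to b$ recorded in Subsection~\ref{dynprogIFS}; granting it, the remaining care is minor, amounting to the attainment of the maximizer at each step — guaranteed by the finiteness of $\mathcal C\times\mathcal I$ and the continuity of the integrand — so that the optimal orbit used in (c) is well defined.
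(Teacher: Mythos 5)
Your proposal is correct and follows essentially the same route as the paper: both rest on the calibrated sub-action $b$ and the value $\bar{u}$ from the discounting limit, use the pointwise calibration inequality $A_c(\tau_a x)\leq \bar{u}-d_x b(a)$ (with equality along optimal orbits) together with telescoping and weak-$*$ limits, and close (e) by combining the upper bound over all of $\mathcal{H}$ with the attainment in $\mathcal{H}_0$. The only (immaterial) difference is in (b), where you integrate the pointwise inequality against the holonomic limit measure using (a), while the paper telescopes along the empirical orbit and lets $\frac{1}{n}\bigl(b(x_n)-b(x_0)\bigr)\to 0$; you also fill in details (the telescoping computation in (a), existence of calibrated orbits via finiteness of $\mathcal{C}\times\mathcal{I}$) that the paper leaves implicit.
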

\begin{proof}
(a) We can easily see that $\mu_{\bar{c},\bar{a},x_0}^{n}$ is a probability, but not necessarily a holonomic measure. But,  if $\mu_{\bar{c},\bar{a},x_0}^{n} \rightharpoonup \mu$,  then $\mu \in \mathcal{H}$.\\
(b) Integrating $g(x,c,a)=A_{c}(\tau_{a}(x))$ with respect to $\mu_{\bar{c},\bar{a},x_0}^{n}$,  we have
$$\int A_{c}(\tau_{a}(x)) d\mu_{\bar{c},\bar{a},x_0}^{n}= \frac{1}{n}\sum_{i=0}^{n-1} A_{c_i}(\tau_{a_i}(x_i))= \frac{1}{n}\sum_{i=0}^{n-1} A_{c_i}(x_{i+1}),$$
and using the sub-action equation we get
$$ \bar{u} \geq  d_{x_0}b(a_0) + A_{c_0}(\tau_{a_0}(x_0))=d_{x_0}b(a_0) + A_{c_0}(x_1),$$
$$ \bar{u} \geq  d_{x_1}b(a_1) + A_{c_1}(\tau_{a_1}(x_1))=d_{x_1}b(a_1) + A_{c_1}(x_2), etc.$$

Adding these sequences of inequalities we obtain
$ n \bar{u} \geq b(x_{n}) - b(x_0) + \sum_{i=0}^{n-1} A_{c_i}(x_{i+1})$,
with equality, if and only if, $\bar{c}$ and $\bar{a}$ are optimal in $x_0$.
Rewriting the above inequality we obtain
$$ \bar{u} \geq \frac{b(x_{n}) - b(x_0)}{n} + \frac{1}{n}\sum_{i=0}^{n-1} A_{c_i}(x_{i+1})= \frac{b(x_{n}) - b(x_0)}{n} + \int A_{c}(\tau_{a}(x)) d\mu_{\bar{c},\bar{a},x_0}^{n}.$$

As $b$ is continuous and $X$ is compact we can take the limit and conclude that  $\int A_{c}(\tau_{a}(x)) d\mu  \leq \bar{u}$.

(c) If $\bar{c}$ and $\bar{a}$ are optimal in $x_0$ and $\mu_{\infty} =\lim \mu_{\bar{c},\bar{a},x_0}^{n}$, then $\int A_{c}(\tau_{a}(x)) d\mu_{\infty}  = \bar{u}$.

(d) From (b) and (c) we obtain that
$\displaystyle\sup_{\mu \in \mathcal{H}_{0}} \int  A_{c}(\tau_x) d\mu(x,c,a)=\bar{u}.$

(e) From the Bellman  equation $ \bar{u} \geq  d_{x}b(a) + A_{c}(\tau_{a}(x)),$ we obtain that, for all $\mu \in \mathcal{H}$, $\int  A_{c}(\tau_x) d\mu(x,c,a) \leq \bar{u}$. From (a), $\mathcal{H}_{0} \subset \mathcal{H}$, thus $m(R)=\bar{u}$.
\end{proof}

\subsection{Empirical discounted holonomic measures}
By analogy with \cite{MR2458239} we define the set of discounted holonomic measures with discount, $0< \lambda < 1$, and trace $\nu \in Prob(X)$, as being the set
$$\mathcal{H}^{\lambda}(\nu)=\left\{ \mu \in Prob(X\times\mathcal{C}\times\mathcal{I})\; | \; \int d_{x}^{\lambda} w (a) d\mu(x,c,a)=\right.$$ $$\left.-(1-\lambda)\int w(x) d\nu(x), \; \forall w \in C^0(X,\mathbb{R}) \right\},$$
where $d_{x}^{\lambda}w(a)= \lambda w(\tau_a x) -w(x)$ is the discounted discrete differential of a continuous function $w$. By an abuse of notation, we can write $d_{x}^{1}w(a)=d_{x}w(a)$, then $\mathcal{H}^{1}(\nu)=\mathcal{H}$, for any $\nu$.
As before, we can formulate the discounted IFS ergodic optimization problem for a finite family of potentials as follows:

\begin{quote}
\emph{\textbf{Given a finite set of potentials $A_{c}: X \to \mathbb{R}$, for $c \in \mathcal{C}$, a discount $0<\lambda<1$, a trace $\nu \in Prob(X)$ and an IFS $R=(X, \tau_a)_{a \in \mathcal{I}}$, we want to find the value
$\displaystyle m_{\lambda}(R)=\sup_{\mu \in \mathcal{H}^{\lambda}(\nu)} \int  A_{c}(\tau_a x) d\mu(x,c,a),$
over the set of discounted holonomic probabilities $\mathcal{H}^{\lambda}(\nu)$ and the optimal measures $\mu_{\infty} \in \mathcal{H}^{\lambda}(\nu)$, such that, $m_{\lambda}(R)= \int  A_{c}(\tau_a x) d\mu_{\infty}(x,c,a)$.}}
\end{quote}

\begin{theorem} \label{discounted holonomic prob properties} The following properties are true:\\
a) For any trace measure $\nu \in Prob(X)$, we get $\gamma \in \mathcal{H}^{\lambda}(\nu)$, where $\gamma: C^0(X, \mathbb{R}) \to \mathbb{R}$ is given  by
$\gamma(g)=(1-\lambda) \int_{X} \sum_{i=0}^{\infty} \lambda^i g(x_i,c_i,a_i) d\nu(x),$
for a fixed pair $(\bar{c}, \bar{a}) \in\mathcal{C}^{\mathbb{N}}\times \mathcal{I}^{\mathbb{N}}$ and $x_0=x \in X$ where, $x_i$ is the  $i$-iterate by the IFS $R$ of $x_0$. In particular the set $\mathcal{H}^{\lambda}(\nu)$ is not empty.\\
b) For any trace $\nu \in Prob(X)$, we have $\displaystyle m_{\lambda}(R) =  (1-\lambda) \int v_{\lambda}(x)d\nu(x)$ where $v_{\lambda}(x)$ is the solution of the Bellman equation $\displaystyle 0=\sup_{(c,a) \in \mathcal{C}\times \mathcal{I}} A_{c}(\tau_{a}(x)) +  d_{x}^{\lambda}v_{\lambda}(a)$.\\
c) For any $z \in X$ the value of the problem for $\mathcal{H}^{\lambda}(\delta_{z})$ is $m_{\lambda}(R)=(1-\lambda) v_{\lambda}(z)$.\\
d) We have the formula $$v_{\lambda}(z)= \inf_{w \in C^0(X,\mathbb{R})}\left[ w(z) + \frac{1}{1-\lambda}\sup_{x\in X} h_{w}(x)\right],$$
where $\displaystyle h_{w}(x)= \sup_{(c,a) \in \mathcal{C}\times \mathcal{I}} \left\{d_{x}^{\lambda}w(a) + A_{c}(\tau_a x) \right\}$.\\
\end{theorem}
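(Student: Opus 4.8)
The plan is to handle the four items in order, with the Bellman equation and a Fenchel--Rockafellar duality as the two main engines, relying throughout on the notation fixed in the dynamic programming subsection (in particular $d_{x}^{\lambda}w(a)=\lambda w(\tau_{a}x)-w(x)$ and $x_{i+1}=\tau_{a_{i}}(x_{i})$ along the IFS orbit).

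For (a), first I would check that $\gamma$ is a probability by testing it on $g\equiv 1$: the inner sum is the geometric series $\sum_{i\geq 0}\lambda^{i}=\frac{1}{1-\lambda}$, so $\gamma(1)=(1-\lambda)\cdot\frac{1}{1-\lambda}=1$, and positivity is immediate. The holonomy constraint is verified by feeding $g(x,c,a)=d_{x}^{\lambda}w(a)=\lambda w(\tau_{a}x)-w(x)$ into $\gamma$: since $\tau_{a_{i}}(x_{i})=x_{i+1}$, the inner sum telescopes, $\sum_{i\geq 0}\lambda^{i}\left[\lambda w(x_{i+1})-w(x_{i})\right]=\sum_{i\geq 0}\left[\lambda^{i+1}w(x_{i+1})-\lambda^{i}w(x_{i})\right]=-w(x_{0})=-w(x)$, where the boundary term $\lambda^{N}w(x_{N})\to 0$ because $w$ is bounded on the compact $X$. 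Integrating in $x$ gives exactly $\gamma(d_{x}^{\lambda}w)=-(1-\lambda)\int w\,d\nu$, so $\gamma\in\mathcal{H}^{\lambda}(\nu)$ and the set is nonempty.

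For (b) I would prove the two inequalities separately. The upper bound uses the Bellman equation in the form $A_{c}(\tau_{a}x)+d_{x}^{\lambda}v_{\lambda}(a)\leq 0$; integrating $A_{c}(\tau_{a}x)\leq -d_{x}^{\lambda}v_{\lambda}(a)$ against an arbitrary $\mu\in\mathcal{H}^{\lambda}(\nu)$ and applying the holonomy constraint with $w=v_{\lambda}$ yields $\int A_{c}(\tau_{a}x)\,d\mu\leq -\int d_{x}^{\lambda}v_{\lambda}(a)\,d\mu=(1-\lambda)\int v_{\lambda}\,d\nu$, hence $m_{\lambda}(R)\leq(1-\lambda)\int v_{\lambda}\,d\nu$. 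For the reverse inequality I would exhibit a measure attaining it: choosing at each $x$ a maximizing pair in the Bellman supremum defines, by a measurable selection (possible since $\mathcal{C}\times\mathcal{I}$ is finite), an orbit-valued map $x\mapsto(\bar{c}(x),\bar{a}(x))$; unfolding the Bellman equation along it gives $v_{\lambda}(x)=\sum_{i\geq 0}\lambda^{i}A_{c_{i}}(x_{i+1})$, the tail $\lambda^{n}v_{\lambda}(x_{n})$ vanishing. Plugging this selection into the $\gamma$ of (a) --- whose holonomy check is pointwise in $x$ and therefore survives an $x$-dependent choice --- produces $\gamma\in\mathcal{H}^{\lambda}(\nu)$ with $\gamma(A_{c}(\tau_{a}\cdot))=(1-\lambda)\int v_{\lambda}\,d\nu$, closing the gap. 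Part (c) is then just the specialization $\nu=\delta_{z}$, for which $\int v_{\lambda}\,d\nu=v_{\lambda}(z)$.

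For (d) I would run the Lagrangian duality matching the appendix duality theorem. Encoding the linear constraint defining $\mathcal{H}^{\lambda}(\delta_{z})$ through a multiplier $w\in C^{0}(X)$, I write $m_{\lambda}(R)=\sup_{\mu\in Prob(X\times\mathcal{C}\times\mathcal{I})}\inf_{w}\left[\int\left(A_{c}(\tau_{a}x)+d_{x}^{\lambda}w(a)\right)d\mu+(1-\lambda)w(z)\right]$, because the inner infimum equals $\int A_{c}(\tau_{a}x)\,d\mu$ exactly when $\mu$ is holonomic and is $-\infty$ otherwise (scale $w$). The functional is linear in $\mu$ and in $w$, and $Prob(X\times\mathcal{C}\times\mathcal{I})$ is convex and weak-* compact, so Sion's minimax theorem exchanges the order; the inner $\sup_{\mu}\int(\cdots)\,d\mu$ over all probabilities is the pointwise maximum $\sup_{x}h_{w}(x)$, giving $m_{\lambda}(R)=\inf_{w}\left[\sup_{x}h_{w}(x)+(1-\lambda)w(z)\right]$. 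Combining with (c) and dividing by $(1-\lambda)$ produces the stated formula for $v_{\lambda}(z)$. The two delicate points are the measurable selection of optimal orbits in (b) --- routine here only because $\mathcal{C}\times\mathcal{I}$ is finite, but it must be stated --- and, above all, the justification of the minimax exchange in (d): one must confirm the absence of a duality gap for the infinite-dimensional constraint indexed by all of $C^{0}(X)$, which is precisely what the appendix duality result is there to supply.
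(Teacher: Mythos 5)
Your proposal is correct, and on the two substantive items it takes a genuinely different route from the paper. Parts (a) and (c) coincide with the paper's proof (the telescoping identity $\sum_{i}\lambda^{i}d_{x_i}^{\lambda}w(a_i)=-w(x_0)$, then the specialization $\nu=\delta_z$). For (b), the paper invokes Theorem~\ref{dual ergodig with discount} twice --- existence of a maximizing $\mu^{\lambda}\in\mathcal{H}^{\lambda}(\nu)$ and the dual identity $\inf_{w}\psi(w)=m_{\lambda}(R)$ for $\psi(w)=(1-\lambda)\int w\,d\nu+\sup_{x,c,a}\{d_{x}^{\lambda}w(a)+A_{c}(\tau_{a}x)\}$ --- and then concludes via the sign of $b_{\lambda}=A_{c}(\tau_{a}x)+d_{x}^{\lambda}v_{\lambda}(a)$ that $\int b_{\lambda}\,d\mu^{\lambda}=0$, hence $m_{\lambda}(R)=\psi(v_{\lambda})$. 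You instead prove two inequalities directly: the upper bound by integrating the pointwise Bellman inequality against an arbitrary holonomic $\mu$ (the weak-duality half, which the paper itself uses later in Theorem~\ref{solve dicounted x_0}), and the lower bound constructively, by unfolding $v_{\lambda}(x_i)=A_{c_i}(\tau_{a_i}x_i)+\lambda v_{\lambda}(x_{i+1})$ along a stationary measurable selection of Bellman-optimal pairs and feeding the resulting $x$-dependent orbits into the $\gamma$ of (a); the pointwise telescoping indeed survives an $x$-dependent choice, so $\gamma\in\mathcal{H}^{\lambda}(\nu)$ and $\gamma(A_{c}(\tau_{a}\cdot))=(1-\lambda)\int v_{\lambda}\,d\nu$. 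This buys two things: item (b) becomes independent of the Fenchel--Rockafellar machinery, and you exhibit an explicit optimal measure for every trace $\nu$ rather than an abstract maximizer. It is also worth noting that your lower bound supplies exactly the inequality $m_{\lambda}(R)\geq(1-\lambda)\int v_{\lambda}\,d\nu$, equivalently that $v_{\lambda}$ attains $\inf_{w}\psi(w)$; in the paper's write-up this is compressed into the assertion $\int b_{\lambda}\,d\mu^{\lambda}=0$, whose displayed relations by themselves only yield $\int b_{\lambda}\,d\mu^{\lambda}\leq 0$, so your construction is, if anything, the cleaner way to close that step. For (d), the paper simply specializes the dual formula of Theorem~\ref{dual ergodig with discount} to $\nu=\delta_z$ and divides by $1-\lambda$; your Sion-minimax derivation (linearity in both arguments, weak-* compactness of $Prob(X\times\mathcal{C}\times\mathcal{I})$, identification of the inner supremum over probabilities with $\sup_{x}h_{w}(x)$) re-proves that formula and is legitimate, though it duplicates what the appendix already provides --- citing it, as you do at the end, suffices. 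The one detail you rightly flag and must spell out is the measurable selection in (b): since $\mathcal{C}\times\mathcal{I}$ is finite, the argmax sets are closed, a lexicographic tie-breaking gives a Borel selection, the orbit maps $x\mapsto x_i$ remain Borel, and the series defining $\gamma$ converges uniformly, so the $\nu$-integral is well defined and Riesz representation applies.
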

\begin{proof}
(a) To see that $\mathcal{H}^{\lambda}(\nu)$ is not empty, we consider the functional $\gamma: C^0(X, \mathbb{R}) \to \mathbb{R}$ given  by
$\displaystyle\gamma(g)=(1-\lambda) \int_{X} \sum_{i=0}^{\infty} \lambda^i g(x_i,c_i,a_i) d\nu(x),$
for a fixed pair $(\bar{c}, \bar{a}) \in\mathcal{C}^{\mathbb{N}}\times \mathcal{I}^{\mathbb{N}}$ and $x_0=x \in X$.
This linear functional is well defined and continuous, because $x \to \sum_{i=0}^{\infty} \lambda^i g(x_i,c_i,a_i)$ is bounded. Moreover, $\gamma$ is positive and $\gamma(1)=1$. Therefore, $\gamma$ is a probability. Computing
$\displaystyle\sum_{i=0}^{n-1} \lambda^i d_{x_i}^{\lambda}w(a_i)=\lambda^n w(x_{n}) -w(x)$
and taking the limit when $n\to\infty$ we can see that
$\displaystyle\int d_{x}^{\lambda}w(a) d\gamma(x,c,a)= -(1-\lambda) \int_{X} w(x)d\nu(x)$, thus $\gamma \in \mathcal{H}^{\lambda}(\nu)$.\\

(b) We will apply Fenchel-Rockafellarduality theorem (see Section~\ref{appendix}). From Theorem~\ref{dual ergodig with discount}, there exists at least one $\mu^{\lambda}$ satisfying
the equation $\displaystyle\int d_{x}^{\lambda}w(a) d\mu^{\lambda}(x,c,a)= -(1-\lambda) \int_{X} w(x)d\nu(x),\; w \in C^0(X,\mathbb{R})$, such that,
\begin{equation} \label{max_realize}
m_{\lambda}(R)=\int  A_{c}(\tau_a x) d\mu^{\lambda}(x,c,a).
\end{equation}
Also  from Theorem~\ref{dual ergodig with discount}, we have the formula
\begin{equation} \label{max_dual}
\inf_{w \in C^0(X,\mathbb{R})}\left[(1-\lambda)\int w(x) d\nu(x)+\sup_{x\in X,c\in\mathcal{C},a\in\mathcal{I}} \left\{d_{x}^{\lambda}w(a) + A_{c}(\tau_a x) \right\}\right] = m_{\lambda}(R).
\end{equation}
Finally, recall that the function $v_{\lambda}$ satisfy the Bellman equation
$\displaystyle 0=\sup_{(c,a) \in \mathcal{C}\times \mathcal{I}} A_{c}(\tau_{a}(x)) +  d_{x}^{\lambda}v_{\lambda}(a)$.

From the equation \eqref{max_dual} we are inspired to define the functional
\begin{equation} \label{max_func}
\psi(w)= (1-\lambda)\int w(x) d\nu(x)+\sup_{x\in X,c\in\mathcal{C},a\in\mathcal{I}} \left\{d_{x}^{\lambda}w(a) + A_{c}(\tau_a x) \right\}, \quad \text{ for } w \in C^0(X,\mathbb{R}).
\end{equation}

We notice that $\psi(v_{\lambda})= (1-\lambda)\int v_{\lambda}(x)  \;  d\nu(x)+ 0=(1-\lambda)\int v_{\lambda}(x)  \;  d\nu(x)$ and from \eqref{max_realize} we obtain
$$m_{\lambda}(R)=\int  A_{c}(\tau_a x)  \; d\mu^{\lambda}(x,c,a)= \int  A_{c}(\tau_a x) + d_{x}^{\lambda}v_{\lambda}(a) - d_{x}^{\lambda}v_{\lambda}(a) \; d\mu^{\lambda}(x,c,a)=$$
$$=\int  A_{c}(\tau_a x) + d_{x}^{\lambda}\, v_{\lambda}(a) \; d\mu^{\lambda}(x,c,a) - \int d_{x}^{\lambda}v_{\lambda}(a)  \; d\mu^{\lambda}(x,c,a) =$$
$$= \int  A_{c}(\tau_a x) + d_{x}^{\lambda} \, v_{\lambda}(a) \; d\mu^{\lambda}(x,c,a) -\left(-(1-\lambda) \int_{X}  v_{\lambda}(x)d\nu(x)\right)= $$
$$= (1-\lambda) \int_{X}  v_{\lambda}(x)d\nu(x)  +  \int  A_{c}(\tau_a x) + d_{x}^{\lambda}v_{\lambda}(a) \; d\mu^{\lambda}(x,c,a) = $$
 $$=\psi(v_{\lambda}) + \int  A_{c}(\tau_a x) + d_{x}^{\lambda}v_{\lambda}(a) \; d\mu^{\lambda}(x,c,a).$$

On the other hand, from the equation \eqref{max_dual} we know that
$\displaystyle \inf_{w \in C^0(X,\mathbb{R})}\psi(w) = m_{\lambda}(R),$
thus
$$ \psi(v_{\lambda}) + \int  A_{c}(\tau_a x) + d_{x}^{\lambda}v_{\lambda}(a) \; d\mu^{\lambda}(x,c,a) = \inf_{w \in C^0(X,\mathbb{R})}\psi(w).$$
As  $v_{\lambda} \in C^0(X,\mathbb{R})$ and $A_{c}(\tau_a x) + d_{x}^{\lambda}v_{\lambda}(a) \leq 0$ we conclude that $\int  A_{c}(\tau_a x) + d_{x}^{\lambda}v_{\lambda}(a) \; d\mu^{\lambda}(x,c,a) =0$ and $m_{\lambda}(R)= \psi(v_{\lambda})= (1-\lambda)\int v_{\lambda}(x)  \;  d\nu(x)$.\\
(c) It is a consequence of (b), when $\nu=\delta_{z}$.\\
(d) Using (b) and the Theorem~\ref{dual ergodig with discount}, we obtain the equality
$$\inf_{w \in C^0(X,\mathbb{R})}\left[(1-\lambda)\int w(x) d\nu(x)+\sup_{x\in X,c\in\mathcal{C},a\in\mathcal{I}} \left\{d_{x}^{\lambda}w(a) + A_{c}(\tau_a x) \right\}\right] =  (1-\lambda) \int_{X}  v_{\lambda}(x)d\nu(x), $$
for any trace measure $\nu$.  Using this formula for $\nu=\delta_{z} \in Prob(X)$ we can easily get the proposed representation of $v_{\lambda}$.
\end{proof}

Inspired by Theorem~\ref{discounted holonomic prob properties}, we are going to consider the particular case, $\nu=\delta_{x_0} \in Prob(X)$, where $\displaystyle x_0= {\rm argmax}_{x \in X} v_{\lambda}(x)$.
\begin{definition}\label{empirical discounted prob measure delta_0}
We consider, for each $\bar{c} \in \mathcal{C}^{\mathbb{N}}$ and $\bar{a} \in \mathcal{I}^{\mathbb{N}}$, the \textbf{empirical discounted probability measure} $\mu_{x_0,\bar{c},\bar{a}}^{\lambda}$ given by
$\displaystyle \int g(x,c,a) d\mu_{x_0,\bar{c},\bar{a}}^{\lambda}= (1-\lambda) \sum_{i=0}^{\infty} \lambda^i g(x_i,c_i,a_i)$
where, $x_i$ is the  $i$-iterate by the IFS $R$ of some point $x_0 \in X$.
\end{definition}
These measures are quite different from the  empirical  ones because each one is not an average in $n$, or subsequences, but they are defined only for $\lambda <1$. Moreover, if $ \mu=\mu_{x_0,\bar{c},\bar{a}}^{\lambda}$, then
$$\int  A_{c}(\tau_a x) d\mu(x,c,a)=   (1-\lambda)\sum_{i=0}^{\infty} \lambda^i A_{c_i}(\tau_{a_i} x_i) = (1-\lambda) S_{x_0}(\bar{c},\bar{a}),$$ showing the connection between this maximization problem and the superior boundary of the set $\Lambda$.

\begin{theorem}\label{solve dicounted x_0} We consider, for each  $\bar{c} \in \mathcal{C}^{\mathbb{N}}$ and $\bar{a} \in \mathcal{I}^{\mathbb{N}}$, the empirical discounted probability measure $\mu_{x_0,\bar{c},\bar{a}}^{\lambda}$. Then, the following statements are true:\\\\
 a) $\mu_{x_0,\bar{c},\bar{a}}^{\lambda} \in \mathcal{H}^{\lambda}(\delta_{x_0})$. \\
 b) Consider $\displaystyle x_0= {\rm argmax}_{x \in X} v_{\lambda}(x)$ and $\nu=\delta_{x_0} \in Prob(X)$. If the pair $(\bar{c}, \bar{a}) \in \mathcal{C}^{\mathbb{N}}\times\mathcal{I}^{\mathbb{N}}$ is optimal then, $m_{\lambda}(R)=(1-\lambda) \max_{x \in X}v_{\lambda}(x)$ and  $\mu^{\lambda}=\mu_{x_0,\bar{c},\bar{a}}^{\lambda} \in \mathcal{H}^{\lambda}(\delta_{x_0})$ solves the discounted IFS ergodic optimization problem for the finite family of potentials.\\
\end{theorem}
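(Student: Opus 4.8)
The plan is to deduce both items from the duality results already established in Theorem~\ref{discounted holonomic prob properties}, treating the empirical discounted measure as the concrete witness of the constraint set $\mathcal{H}^{\lambda}(\delta_{x_0})$.

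For (a), I would observe that $\mu_{x_0,\bar{c},\bar{a}}^{\lambda}$ is precisely the functional $\gamma$ of Theorem~\ref{discounted holonomic prob properties}(a) specialized to the trace $\nu=\delta_{x_0}$, since integrating against $\delta_{x_0}$ collapses $\int_X \sum_i \lambda^i g(x_i,c_i,a_i)\,d\nu(x)$ to the single orbit based at $x_0$. To keep the argument self-contained, I would instead verify the defining identity directly: integrating $d_{x}^{\lambda}w(a)=\lambda w(\tau_a x)-w(x)$ against $\mu_{x_0,\bar{c},\bar{a}}^{\lambda}$ produces $(1-\lambda)\sum_{i=0}^{\infty}\big(\lambda^{i+1}w(x_{i+1})-\lambda^{i}w(x_{i})\big)$, a telescoping series whose inner partial sums equal $\lambda^{n}w(x_{n})-w(x_0)$. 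Since $w$ is bounded on the compact space $X$ and $\lambda^{n}\to 0$, these partial sums converge to $-w(x_0)$, so the integral equals $-(1-\lambda)w(x_0)=-(1-\lambda)\int w\,d\delta_{x_0}$, which is exactly the membership condition for $\mathcal{H}^{\lambda}(\delta_{x_0})$.

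For the value formula in (b), I would apply Theorem~\ref{discounted holonomic prob properties}(c) with $z=x_0$: since $x_0={\rm argmax}_{x}v_{\lambda}(x)$, this immediately yields $m_{\lambda}(R)=(1-\lambda)v_{\lambda}(x_0)=(1-\lambda)\max_{x\in X}v_{\lambda}(x)$. It then remains to show that $\mu^{\lambda}=\mu_{x_0,\bar{c},\bar{a}}^{\lambda}$ attains this value. By (a) the measure is admissible, hence a competitor in the supremum defining $m_{\lambda}(R)$, and using the identity $\int A_{c}(\tau_a x)\,d\mu^{\lambda}=(1-\lambda)S_{x_0}(\bar{c},\bar{a})$ recorded just before the theorem together with the fact that an optimal pair realizes $v_{\lambda}(x_0)=v_{\lambda}^{+}(x_0)=S_{x_0}(\bar{c},\bar{a})$ (the maximum in the definition of $v_{\lambda}^{+}$ being attained), one obtains $\int A_{c}(\tau_a x)\,d\mu^{\lambda}=(1-\lambda)v_{\lambda}(x_0)=m_{\lambda}(R)$. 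Thus the supremum is achieved at $\mu^{\lambda}$, proving it solves the discounted problem.

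I expect the only delicate point to be the bookkeeping that aligns the indexings at play: the IFS orbit $x_i=\tau_{i,\bar{a}}(x_0)$, the telescoping indices in the holonomy constraint, and the expansion of the Bellman value $v_{\lambda}^{+}(x_0)$ along the optimal pair. Once it is checked that ``$(\bar{c},\bar{a})$ optimal'' means exactly that the geometric sum $S_{x_0}(\bar{c},\bar{a})$ equals the maximal value $v_{\lambda}^{+}(x_0)$, every remaining step is a direct invocation of Theorem~\ref{discounted holonomic prob properties} and of the boundedness and continuity of $S$ furnished by Lemma~\ref{continuitySlambda}.
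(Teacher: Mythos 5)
Your proposal is correct, and part (a) coincides with the paper's own argument: the same telescoping of $(1-\lambda)\sum_i \lambda^i d_{x_i}^{\lambda}w(a_i)$ to $-(1-\lambda)w(x_0)$. For part (b), however, you take a genuinely different route. You obtain the value formula $m_{\lambda}(R)=(1-\lambda)v_{\lambda}(x_0)$ by citing Theorem~\ref{discounted holonomic prob properties}(c), whose proof rests on the Fenchel--Rockafellar duality of Section~\ref{appendix}; the paper instead re-derives this bound from scratch inside the proof: it integrates the pointwise Bellman inequality $A_{c}(\tau_{a}(x))\leq -d_{x}^{\lambda}v_{\lambda}(a)$ against an arbitrary $\mu\in\mathcal{H}^{\lambda}(\delta_{x_0})$ and uses the holonomy constraint to get $m_{\lambda}(R)\leq(1-\lambda)v_{\lambda}(x_0)$, so its argument for this theorem is self-contained and duality-free. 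For attainment, the paper chooses $(\bar{c},\bar{a})$ greedily, so that $A_{c_i}(\tau_{a_i}(x_i))=-d_{x_i}^{\lambda}v_{\lambda}(a_i)$ at every step, and telescopes the resulting sum; you instead use the identity $\int A_{c}(\tau_{a}x)\,d\mu^{\lambda}=(1-\lambda)S_{x_0}(\bar{c},\bar{a})$ recorded before the theorem together with the definition of an optimal pair from Section~\ref{dynprogIFS}, namely one attaining $v_{\lambda}^{+}(x_0)=\max S_{x_0}$. These two notions of optimality are equivalent (greedy choices telescope to $S_{x_0}(\bar{c},\bar{a})=v_{\lambda}(x_0)$; conversely, equality in the sum of the termwise inequalities forces equality at each step), and your reading is if anything closer to the hypothesis as literally stated. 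What the paper's route buys is independence from the duality apparatus, giving a direct Bellman-equation proof of the value formula for the trace $\delta_{x_0}$; what yours buys is brevity, since the heavy lifting is delegated to the already-established Theorem~\ref{discounted holonomic prob properties}.
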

\begin{proof}
(a) Note that
$$\int d_{x}^{\lambda}w(a) d\mu_{x_0,\bar{c},\bar{a}}^{\lambda}= (1-\lambda)\sum_{i=0}^{+\infty} \lambda^i d_{x_i}^{\lambda}w(a_i)=(1-\lambda)\lim_{n\to+\infty} (\lambda^n w(x_{n}) -w(x_0))=$$ $$=-(1-\lambda) w(x_0)= -(1-\lambda)\int w(x)d\delta_{x_0}(x).$$
Thus, $\mu_{x_0,\bar{c},\bar{a}}^{\lambda} \in\mathcal{H}^{\lambda}(\delta_{x_0})$.

(b) We recall that the integral $\displaystyle \int A_{c}(\tau_a x) d\mu_{x_0,\bar{c},\bar{a}}^{\lambda}$ is the limit
$$\int A_{c}(\tau_a x) d\mu_{x_0,\bar{c},\bar{a}}^{\lambda}= (1-\lambda)\lim_{n\to+\infty} \sum_{i=0}^{n-1} \lambda^i A_{c_i}(\tau_{a_i} x_i).$$

We can use the Bellman equation
$ \displaystyle v_{\lambda}(x)=\sup_{(c,a) \in \mathcal{C}\times \mathcal{I}} A_{c}(\tau_{a}(x)) +  \lambda v_{\lambda}(\tau_{a}(x))$
to construct a special discounted measure. We observe that the Bellman equation
could be rewritten as
$$ 0=\sup_{(c,a) \in \mathcal{C}\times \mathcal{I}} A_{c}(\tau_{a}(x)) +  \lambda v_{\lambda}(\tau_{a}(x))-v_{\lambda}(x)=\sup_{(c,a) \in \mathcal{C}\times \mathcal{I}} A_{c}(\tau_{a}(x)) +  d_{x}^{\lambda}v_{\lambda}(a).$$
In particular, $A_{c}(\tau_{a}(x))  \leq  -d_{x}^{\lambda}v_{\lambda}(a), \; \forall (c, a) \in \mathcal{C}\times \mathcal{I}$, and the equality is attained for some pair $(c_0, a_0) \in \mathcal{C}\times \mathcal{I}$ in each point $x \in X$.

From the above inequality we obtain that for any measure $\mu$ the following inequality holds
$$\int A_{c}(\tau_{a}(x)) d\mu(x,c,a) \leq \int -d_{x}^{\lambda}v_{\lambda}(a)d\mu(x,c,a).$$
If we additionally suppose that  $\mu \in \mathcal{H}^{\lambda}(\nu)$, then
$\displaystyle \int A_{c}(\tau_{a}(x)) d\mu(x,c,a) \leq -(1-\lambda) \int -v_{\lambda}(x)d\nu(x).$
Thus, $m_{\lambda}(R) \leq (1-\lambda) \int v_{\lambda}(x)d\nu(x)=(1-\lambda)   v_{\lambda}(x_0)$
in the case where $\nu =\delta_{x_0}$.

We take the measure $\mu^{\lambda}=\mu_{x_0,\bar{c},\bar{a}}^{\lambda}$ by choosing $\displaystyle x_0= {\rm argmax}_{x \in X} v_{\lambda}(x)$ and $\bar{c}$ and $\bar{a}$ in, such way, that $ 0= A_{c_i}(\tau_{a_i}(x_i)) +  d_{x_i}^{\lambda}v_{\lambda}(a_i)$, or $ A_{c_i}(\tau_{a_i}(x_i)) =-  d_{x_i}^{\lambda}v_{\lambda}(a_i)$, for all $i\in \mathbb{N}$. Thus,
$$\int A_{c}(\tau_a x) d\mu^{\lambda}=(1-\lambda)\lim_{n \to \infty}\sum_{i=0}^{n-1} \lambda^i (-  d_{x_i}^{\lambda}v_{\lambda}(a_i))=(1-\lambda) \max_{x \in X}v_{\lambda}(x).$$
It means that $m_{\lambda}(R)=(1-\lambda) \max_{x \in X}v_{\lambda}(x)$ and then $\mu^{\lambda}$ solves the discounted ergodic optimization problem for the finite family of potentials for $\nu=\delta_{x_0} \in Prob(X)$.
\end{proof}

\begin{remark}
  From  \cite{Cioletti_2019} we know that $\displaystyle (1-\lambda) \max_{x \in X}v_{\lambda}(x) \to \bar{u}$, thus if $\mu^{\lambda} \to \mu$, through some subsequence $\lambda_j \to 1$, then $\mu \in \mathcal{H}$ and $\displaystyle\int A_{c}(\tau_a x) d\mu=\lim_{\lambda_j \to 1}\int A_{c}(\tau_a x) d\mu^{\lambda_j}= (1-\lambda_j) \max_{x \in X}v_{\lambda_j}(x)= \bar{u}$. So, we can obtain maximizing holonomic measures as limit of discounted holonomic measures with trace $\delta_{x_0}$, arising from the solutions of Bellman's equation.
\end{remark}
\subsection{On the support of the maximizing measures and invariance properties}

We recall that the continuous function $v_{\lambda}(x)$ satisfies the Bellman equation
$ \displaystyle v_{\lambda}(x)=\sup_{(c,a) \in \mathcal{C}\times \mathcal{I}} A_{c}(\tau_{a}(x)) +  \lambda v_{\lambda}(\tau_{a}(x))$, and also satisfies the equation
$\displaystyle 0=\sup_{(c,a) \in \mathcal{C}\times \mathcal{I}} A_{c}(\tau_{a}(x)) +  d_{x}^{\lambda}v_{\lambda}(a).$
In particular, $A_{c}(\tau_{a}(x))  \leq  -d_{x}^{\lambda}v_{\lambda}(a), \; \forall (c, a) \in \mathcal{C}\times \mathcal{I}$.

From this property we will get a characterization of the support of discounted holonomic maximizing measures and its invariance.
\begin{theorem}\label{support of discounted holonomic maximizing measures} Let $v_{\lambda}$ be the only solution of the Bellman equation. Then, a measure $\mu^{\lambda} \in \mathcal{H}^{\lambda}(\nu)$ is a maximizing measure, if, and only if, $\supp{\mu^{\lambda}} \subseteq \{(x,c,a) \; | \; 0= A_{c}(\tau_{a}(x)) +  d_{x}^{\lambda}v_{\lambda}(a)\}$. In particular, the superior boundary of $\Lambda$ is essentially backwards invariant, that is,  $G_{c}(\tau_{a}(x), v_{\lambda}(\tau_{a}(x)))=(x, v_{\lambda}(x))$ for $\mu^{\lambda}$.a.e. $(x,c,a)$.
\end{theorem}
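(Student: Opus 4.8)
The plan is to turn the statement into a \emph{complementary-slackness} computation built on the Bellman inequality $A_c(\tau_a x) + d_x^\lambda v_\lambda(a) \leq 0$ and the defining holonomy constraint of $\mathcal{H}^\lambda(\nu)$. First I would fix an arbitrary $\mu^\lambda \in \mathcal{H}^\lambda(\nu)$ and decompose the objective by writing $A_c(\tau_a x) = -d_x^\lambda v_\lambda(a) + \big(A_c(\tau_a x) + d_x^\lambda v_\lambda(a)\big)$. Integrating and using the holonomy identity $\int d_x^\lambda v_\lambda(a)\, d\mu^\lambda = -(1-\lambda)\int v_\lambda\, d\nu$ (the constraint applied to $w = v_\lambda$) yields
$$\int A_c(\tau_a x)\, d\mu^\lambda = (1-\lambda)\int v_\lambda\, d\nu + \int \big(A_c(\tau_a x) + d_x^\lambda v_\lambda(a)\big)\, d\mu^\lambda.$$
By Theorem~\ref{discounted holonomic prob properties}(b) the first term is exactly $m_\lambda(R)$, so the gap from maximality is precisely $\int (A_c(\tau_a x) + d_x^\lambda v_\lambda(a))\, d\mu^\lambda$, whose integrand is continuous (since $v_\lambda$ is Lipschitz) and nonpositive everywhere by the Bellman inequality.

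The equivalence then reduces to the elementary principle that a continuous function of constant sign integrates to zero against a probability measure if and only if it vanishes on the support of that measure. Indeed $\mu^\lambda$ is maximizing precisely when the gap integral vanishes, and since its integrand is continuous and $\leq 0$, this happens if and only if $A_c(\tau_a x) + d_x^\lambda v_\lambda(a) = 0$ throughout $\supp \mu^\lambda$. This is exactly the containment $\supp \mu^\lambda \subseteq \{(x,c,a)\ |\ 0 = A_c(\tau_a x) + d_x^\lambda v_\lambda(a)\}$, so both implications follow simultaneously from the single gap formula above.

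For the backward-invariance consequence I would argue pointwise on the support. Unfolding $d_x^\lambda v_\lambda(a) = \lambda v_\lambda(\tau_a x) - v_\lambda(x)$, the support condition becomes $A_c(\tau_a x) + \lambda v_\lambda(\tau_a x) = v_\lambda(x)$. Since $\tau_a$ is an inverse branch of $T$ we have $T(\tau_a x) = x$, and therefore
$$G_c\big(\tau_a x,\, v_\lambda(\tau_a x)\big) = \big(T(\tau_a x),\, A_c(\tau_a x) + \lambda v_\lambda(\tau_a x)\big) = \big(x,\, v_\lambda(x)\big),$$
which is the asserted identity at every support point, hence for $\mu^\lambda$-a.e. $(x,c,a)$.

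I do not expect a serious obstacle; the heart of the argument is just the duality-gap bookkeeping together with the sign of the Bellman inequality. The one point needing care is the passage from ``gap integral is zero'' to ``integrand vanishes on the support'' rather than merely ``$\mu^\lambda$-a.e.''; this upgrade is exactly what the continuity of $A_c(\tau_a x) + d_x^\lambda v_\lambda(a)$ --- guaranteed by the Lipschitz regularity of the Bellman solution $v_\lambda$ --- provides.
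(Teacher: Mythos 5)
Your proof is correct and follows essentially the same route as the paper: both arguments rest on the nonpositive continuous function $b_{\lambda}(x,c,a)=A_{c}(\tau_{a}(x))+d_{x}^{\lambda}v_{\lambda}(a)$, the holonomy constraint tested against $w=v_{\lambda}$, the identity $m_{\lambda}(R)=(1-\lambda)\int v_{\lambda}\,d\nu$ from Theorem~\ref{discounted holonomic prob properties}(b), and the fact that a continuous function of constant sign integrates to zero against $\mu^{\lambda}$ if and only if it vanishes on $\supp\mu^{\lambda}$. Your single gap formula merely packages the two implications that the paper treats separately, and your explicit computation $G_{c}(\tau_{a}(x),v_{\lambda}(\tau_{a}(x)))=(T(\tau_{a}(x)),A_{c}(\tau_{a}(x))+\lambda v_{\lambda}(\tau_{a}(x)))=(x,v_{\lambda}(x))$ supplies the backward-invariance claim that the paper's proof leaves implicit.
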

\begin{proof}
The proof is an adaptation of the argument presented in \cite{MR1841880} and \cite{MR1855838}. We also use the Theorem~\ref{discounted holonomic prob properties} (b) to claim that, for any trace $\nu \in Prob(X)$,  we have $ m_{\lambda}(R) =  (1-\lambda) \int v_{\lambda}(x)d\nu(x)$.

Consider the continuous function $b_{\lambda}(x,c,a)=A_{c}(\tau_{a}(x)) +  d_{x}^{\lambda}v_{\lambda}(a) \leq 0$. If $\supp{\mu^{\lambda}} \subseteq \{(x,c,a) \; | \; 0= A_{c}(\tau_{a}(x)) +  d_{x}^{\lambda}v_{\lambda}(a)\}$, integrating this function with respect to $\mu^{\lambda} \in \mathcal{H}^{\lambda}(\nu)$ we get that $\mu_{\lambda}$ is maximizing. Reciprocally, if $\mu_{\lambda}$ is maximizing, then
$\displaystyle \int A_{c}\left(\tau_{a}(x)\right) d \mu^{\lambda}(x, c, a)=m_{\lambda}(R)=$
$\displaystyle (1-\lambda) \int v_{\lambda}(x) d \nu(x)=-\int d_{x}^{\lambda} v_{\lambda}(a) d \mu^{\lambda}(x, c, a)$ implies $$\int b_{\lambda}(x, c, a) d \mu^{\lambda}(x, c, a)=\int A_{c}\left(\tau_{a}(x)\right)+d_{x}^{\lambda} v_{\lambda}(a) d \mu^{\lambda}(x, c, a)=0$$
therefore $\supp{\mu^{\lambda}} \subseteq \{(x,c,a) \; | \; 0= A_{c}(\tau_{a}(x)) +  d_{x}^{\lambda}v_{\lambda}(a)\}$ because $b_{\lambda}(x,c,a) \leq 0$.
\end{proof}

When we take the discounting limit $\lambda \to 1$   we get $m_{\lambda}(R) \to m(R)=m$ and  the equation $ \displaystyle v(x)=\sup_{(c,a) \in \mathcal{C}\times \mathcal{I}} (A_{c}(\tau_{a}(x)) - m) +   v(\tau_{a}(x))$ holds
for any cluster point $v_{\lambda} \to v$. This could be rewritten as
$\displaystyle 0=\sup_{(c,a) \in \mathcal{C}\times \mathcal{I}} (A_{c}(\tau_{a}(x))  - m) + d_{x}v(a).$
In particular, $(A_{c}(\tau_{a}(x))  - m) \leq  -d_{x}v(a), \; \forall (c, a) \in \mathcal{C}\times \mathcal{I}$.

From this property we get a characterization of the support of holonomic maximizing measures and its invariance under the modified maps $\bar{G}_{c}(x,y)=(T(x), (A_{c}(x) - m) + y)$.
\begin{theorem}\label{support of holonomic maximizing measures} Let $v$ be any continuous solution of the equation $\displaystyle 0=\sup_{(c,a) \in \mathcal{C}\times \mathcal{I}} (A_{c}(\tau_{a}(x))  - m) + d_{x}v(a)$. Then, a measure $\mu \in \mathcal{H}$ is a maximizing measure, if, and only if, $\supp{\mu} \subseteq \{(x,c,a) \; | \; 0= (A_{c}(\tau_{a}(x))-m) +  d_{x}v(a)\}$. In particular, the graph $\{ (x, v(x)) \; | \; x \in \mathbf{S}^1\}$ is essentially backwards invariant, that is,  $\bar{G}_{c}(\tau_{a}(x), v(\tau_{a}(x)))=(x, v(x))$, for $\mu$.a.e. $(x,c,a)$.
\end{theorem}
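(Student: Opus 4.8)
The plan is to mirror the proof of Theorem~\ref{support of discounted holonomic maximizing measures}, replacing the discounted differential $d_x^{\lambda}$ by the plain differential $d_x$ and the discounted value $m_{\lambda}(R)$ by the critical value $m=m(R)$. The central object is the continuous function $b(x,c,a)=(A_{c}(\tau_{a}(x))-m)+d_{x}v(a)$, which the Hamilton--Jacobi equation $0=\sup_{(c,a)}(A_{c}(\tau_{a}(x))-m)+d_{x}v(a)$ forces to satisfy $b\leq 0$ everywhere, with equality attained, at each $x$, for the optimal pair.

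First I would exploit the holonomy constraint. Since $v\in C^0(X,\mathbb{R})$ and $\mu\in\mathcal{H}$, the defining property $\int d_{x}g(a)\,d\mu=0$ applied to $g=v$ yields $\int d_{x}v(a)\,d\mu=0$. Consequently $\int b\,d\mu=\int A_{c}(\tau_{a}x)\,d\mu-m$, so that $\int b\,d\mu=0$ if and only if $\int A_{c}(\tau_{a}x)\,d\mu=m=m(R)$, i.e. if and only if $\mu$ is maximizing. This is the step that packages both implications of the equivalence into a single identity.

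Next, because $b$ is continuous and nonpositive, $\int b\,d\mu=0$ holds if and only if $b=0$ $\mu$-a.e., which for a continuous integrand is equivalent to $\supp \mu\subseteq\{(x,c,a)\;|\;0=(A_{c}(\tau_{a}(x))-m)+d_{x}v(a)\}$. Combining this with the previous paragraph proves both directions of the theorem at once. Finally, the backward-invariance statement is just an unpacking of the zero set: on $\{b=0\}$ one has $v(x)=(A_{c}(\tau_{a}(x))-m)+v(\tau_{a}x)$, and since $\tau_{a}$ is an inverse branch of $T$ we have $T(\tau_{a}x)=x$, whence $\bar{G}_{c}(\tau_{a}x,v(\tau_{a}x))=(T(\tau_{a}x),(A_{c}(\tau_{a}x)-m)+v(\tau_{a}x))=(x,v(x))$ for $\mu$-a.e. $(x,c,a)$.

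The argument is essentially a direct translation of the discounted case, so I expect no serious obstacle; the only points requiring care are mild. One is the measure-theoretic fact that vanishing of the integral of a continuous nonpositive function is equivalent to containment of the support in its zero set. The other is that the constant $m$ appearing in the limiting equation is genuinely $m(R)=\bar{u}$ and that the given $v$ solves that equation — both of which are furnished by the discussion preceding the statement (the cluster-point passage $v_{\lambda}\to v$ and the convergence $m_{\lambda}(R)\to m(R)$). With these in hand the proof reduces to the elementary manipulations above.
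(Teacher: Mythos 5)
Your proposal is correct and follows essentially the same route as the paper: both hinge on the nonpositive continuous function $b(x,c,a)=(A_{c}(\tau_{a}(x))-m)+d_{x}v(a)$, the identification $m=m(R)=\bar{u}$, the holonomy constraint killing $\int d_{x}v(a)\,d\mu$, and the standard fact that a nonpositive continuous function integrates to zero precisely when the support lies in its zero set. The only cosmetic difference is that you package both implications into the single identity $\int b\,d\mu=\int A_{c}(\tau_{a}x)\,d\mu-m$, whereas the paper (in the discounted model it refers back to) treats the two directions separately; the content is the same.
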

\begin{proof}
The proof is analogous to the proof of Theorem~\ref{support of discounted holonomic maximizing measures} by considering the continuous function $b(x,c,a)=(A_{c}(\tau_{a}(x))-m) +  d_{x}v(a) \leq 0$. We also use the fact that $\displaystyle \sup_{\mu \in \mathcal{H}} \int  A_{c}(\tau_x a) d\mu(x,c,a)=  m$.
\end{proof}

\section{Appendix: Duality for Discounted Holonomic Measures}\label{appendix}
For the sake of completeness, we state the duality result used in the Theorem~\ref{discounted holonomic prob properties}.  See \cite{MR1964483} for a proof of the  Fenchel-Rockafellar duality theorem. See \cite{MR1901094} and \cite{MR2458239} for applications in optimization and variational problems.

\begin{theorem}[\textbf{Fenchel-Rockafellar duality}]\label{FR}
Suppose that $E$ is  a normed vector space,  $\Gamma$ and $\Phi$ are two convex functions defined on $E$ taking values in $\mathbb{R}\cup \{+\infty\}$. Denote $\Gamma^{\ast}$ and  $\Phi^{\ast}$, respectively, the Legendre-Fenchel transforms of  $\Gamma$ and $\Phi$.
Suppose there exists  $\psi_0\in E$, such that, $\Gamma(\psi_0)<+\infty,\, \Phi(\psi_0)<+\infty$, and that $\Gamma$ is continuous on $\psi_0$.
Then,
\begin{equation}
\inf_{\psi \in E}[\Gamma(\psi)+\Phi(\psi)]=\sup_{\pi\in E^{*}}[-\Gamma^{*}(-\pi)-\Phi^{*}(\pi)] \label{rockafeller}
\end{equation}
Moreover, the supremum in ($\ref{rockafeller}$) is attained in at least one element in $\pi \in E^*$.
\end{theorem}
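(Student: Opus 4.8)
The statement is the classical Fenchel--Rockafellar duality theorem, so my plan is to reproduce the standard Hahn--Banach separation argument rather than to invent anything new. First I would dispatch the easy inequality (weak duality). For every $\psi \in E$ and every $\pi \in E^{*}$, the definition of the conjugates gives the Fenchel--Young inequalities $\Gamma(\psi) \geq -\langle \pi,\psi\rangle - \Gamma^{*}(-\pi)$ and $\Phi(\psi) \geq \langle \pi,\psi\rangle - \Phi^{*}(\pi)$. Adding them cancels the pairing and yields $\Gamma(\psi)+\Phi(\psi) \geq -\Gamma^{*}(-\pi)-\Phi^{*}(\pi)$; taking $\inf_{\psi}$ on the left and $\sup_{\pi}$ on the right gives $\inf_{\psi}[\Gamma(\psi)+\Phi(\psi)] \geq \sup_{\pi}[-\Gamma^{*}(-\pi)-\Phi^{*}(\pi)]$. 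This direction uses neither convexity nor the continuity hypothesis.

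For the reverse inequality I would set $m=\inf_{\psi}[\Gamma(\psi)+\Phi(\psi)]$, noting $m \leq \Gamma(\psi_{0})+\Phi(\psi_{0}) < +\infty$ by hypothesis; if $m=-\infty$ then weak duality already forces both sides to equal $-\infty$, so I may assume $m$ is finite. The core is a geometric separation in $E\times\mathbb{R}$. I would introduce the strict epigraph $A=\{(\psi,t)\;|\;t>\Gamma(\psi)\}$ and the set $B=\{(\psi,t)\;|\;t\leq m-\Phi(\psi)\}$. Both are convex (since $\Gamma$ and $\Phi$ are), and they are disjoint: a common point would satisfy $\Gamma(\psi)<t\leq m-\Phi(\psi)$, hence $\Gamma(\psi)+\Phi(\psi)<m$, contradicting the definition of $m$. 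The assumption that $\Gamma$ is continuous at $\psi_{0}$ is exactly what guarantees that $A$ has nonempty interior: near $\psi_{0}$ one has $\Gamma(\psi)<\Gamma(\psi_{0})+1$, so $A$ contains a full ball around $(\psi_{0},\Gamma(\psi_{0})+1)$, which is the hypothesis the geometric Hahn--Banach theorem requires.

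Applying the geometric Hahn--Banach theorem I obtain a nonzero continuous functional on $E\times\mathbb{R}$, necessarily of the form $(\psi,t)\mapsto\langle \ell,\psi\rangle+\alpha t$ with $\ell\in E^{*}$, $\alpha\in\mathbb{R}$, together with a constant $c$ separating $A$ from $B$, say $\langle\ell,\psi\rangle+\alpha t\geq c$ on $A$ and $\leq c$ on $B$. Letting $t\to+\infty$ inside $A$ forces $\alpha\geq 0$, and letting $t\to-\infty$ inside $B$ gives the same sign constraint. The decisive step, which I expect to be the main obstacle to state cleanly, is to rule out $\alpha=0$: a vertical hyperplane would separate the projections onto $E$, yet both projections contain a neighbourhood of $\psi_{0}$ (again by continuity and by finiteness of $\Phi(\psi_{0})$), which is impossible. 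This is precisely where the Slater-type assumption is indispensable. Once $\alpha>0$, I normalize and set $\pi=\ell/\alpha\in E^{*}$. Reading the inequality on $A$ at $t\downarrow\Gamma(\psi)$ gives $\langle\pi,\psi\rangle+\Gamma(\psi)\geq c/\alpha$ for all $\psi$, hence $\Gamma^{*}(-\pi)\leq -c/\alpha$; reading the inequality on $B$ at the binding value $t=m-\Phi(\psi)$ gives $\langle\pi,\psi\rangle-\Phi(\psi)\leq c/\alpha-m$, hence $\Phi^{*}(\pi)\leq c/\alpha-m$. Adding these, $-\Gamma^{*}(-\pi)-\Phi^{*}(\pi)\geq m$, which combined with weak duality forces equality and exhibits this particular $\pi$ as a maximizer, thereby establishing both the identity and the attainment claim.
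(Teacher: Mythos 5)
The paper itself contains no proof of this statement: Theorem~\ref{FR} is quoted as a known classical result, and the reader is referred to \cite{MR1964483} for its proof. So there is nothing internal to compare against; what you have written is a correct reconstruction of the standard separation argument given in such references (Villani, or Brezis's functional analysis text). Your weak-duality step is exactly Fenchel--Young; your strong-duality step --- separating the strict epigraph $A$ of $\Gamma$ from the hypograph $B$ of $m-\Phi$ in $E\times\mathbb{R}$, using continuity of $\Gamma$ at $\psi_0$ both to give $A$ nonempty interior and to exclude a vertical separating hyperplane, then normalizing to $\pi=\ell/\alpha$ and reading off $\Gamma^{*}(-\pi)\leq -c/\alpha$ and $\Phi^{*}(\pi)\leq c/\alpha-m$ --- is the canonical proof, and the bookkeeping, including the attainment claim and the trivial case $m=-\infty$, is right. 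Two minor points of precision. First, when ruling out $\alpha=0$ you assert that \emph{both} projections onto $E$ contain a neighbourhood of $\psi_0$; this holds for $A$ (its projection is $\mathrm{dom}\,\Gamma$, which contains a ball by continuity), but the projection of $B$ is $\mathrm{dom}\,\Phi$, which is only guaranteed to contain the point $\psi_0$ itself. The argument survives unchanged: from $\langle\ell,\cdot\rangle\geq c$ on a ball around $\psi_0$ and $\langle\ell,\psi_0\rangle\leq c$ one gets $\langle\ell,\psi_0\rangle=c$, hence $\langle\ell,h\rangle\geq 0$ for all small $h$, hence $\ell=0$, contradicting nontriviality of the separating functional. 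Second, $A$ need not be open (only its interior is), so one must either invoke the separation theorem in the form that requires one convex set merely to have nonempty interior, as you implicitly do, or separate $\mathrm{int}\,A$ from $B$ and use $A\subseteq\overline{\mathrm{int}\,A}$; either way the conclusion is the same.
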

Given a finite set of potentials $A_{c}: X \to \mathbb{R}$ for $c \in \mathcal{C}$, a discount $0<\lambda<1$, a trace $\nu \in Prob(X)$ and an IFS $R=(X, \tau_a)_{a \in \mathcal{I}}$ we want to find the dual of
$\displaystyle m_{\lambda}(R)=\sup_{\mu \in \mathcal{H}^{\lambda}(\nu)} \int  A_{c}(\tau_a x) d\mu(x,c,a).$
Recall that the set of discounted holonomic probabilities $\mathcal{H}^{\lambda}(\nu) $ is formed by the measures $\mu$ such that

\begin{equation}\label{holonomic}
\int d_{x}^{\lambda} w (a) d\mu(x,c,a) =-(1-\lambda)\int w(x) d\nu(x),
\end{equation}
for all $w \in C^0(X,\mathbb{R})$. Here, $d_{x}^{\lambda}w(a)= \lambda w(\tau_a x) -w(x)$ is the discounted discrete differential of a continuous function $w$.

\begin{theorem} \label{dual ergodig with discount} Under the above hypothesis, there exists at least one $\mu$ satisfying the condition \eqref{holonomic}, such that,
$m_{\lambda}(R)=\max_{\mu \in \mathcal{H}^{\lambda}(\nu)} \int  A_{c}(\tau_a x) d\mu(x,c,a)$. Besides,
\begin{equation}\label{dualgeral}
\inf_{w \in C^0(X,\mathbb{R})}\left[(1-\lambda)\int w(x) d\nu(x)+\sup_{x\in X,c\in\mathcal{C},a\in\mathcal{I}} \left\{d_{x}^{\lambda}w(a) + A_{c}(\tau_a x) \right\}\right] = \max_{\mu \in \mathcal{H}^{\lambda}(\nu)} \int  A_{c}(\tau_a x) d\mu(x,c,a).
\end{equation}
\end{theorem}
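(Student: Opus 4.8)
The plan is to obtain \eqref{dualgeral} as a direct instance of the Fenchel--Rockafellar identity in Theorem~\ref{FR}, applied on $E = C^0(X\times\mathcal{C}\times\mathcal{I})$, whose topological dual $E^*$ is the space of finite signed Radon measures on the compact metric space $X\times\mathcal{C}\times\mathcal{I}$. Writing $A(x,c,a):=A_c(\tau_a x)$, I would set $\Gamma(\psi)=\sup_{(x,c,a)}\{\psi(x,c,a)+A(x,c,a)\}$ and let $\Phi(\psi)$ equal $(1-\lambda)\int_X w\,d\nu$ whenever $\psi = d_x^\lambda w(a)$ for some $w\in C^0(X)$, and $+\infty$ otherwise. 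The \emph{assignment of roles} is the delicate point: Theorem~\ref{FR} demands that the functional called $\Gamma$ be continuous at the qualification point, and the supremum above is finite and $1$-Lipschitz in $\psi$, hence continuous everywhere, whereas the constraint functional $\Phi$ is $+\infty$ off a proper subspace and could never be continuous on it.

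To legitimise $\Phi$ I would first verify that $w\mapsto d_x^\lambda w(\cdot)$ is injective: if $\lambda w(\tau_a x)=w(x)$ for all $x,a$, then $\|w\|_\infty\le\lambda\|w\|_\infty$, forcing $w\equiv 0$ since $\lambda<1$; thus the representing $w$ is unique, $\Phi$ is affine on the range of $d^\lambda$ and $+\infty$ elsewhere, hence convex, while $\Gamma$ is convex as a supremum of affine functions. For the qualification hypothesis I take $\psi_0=0$ (the image of $w=0$), at which $\Phi(\psi_0)=0$ and $\Gamma(\psi_0)=\sup_{(x,c,a)}A(x,c,a)<\infty$, with $\Gamma$ continuous; all hypotheses of Theorem~\ref{FR} are then met.

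Next I would compute the two Legendre--Fenchel transforms. For $\Gamma^*$, the substitution $\phi=\psi+A$ reduces everything to the classical transform of $\phi\mapsto\max_{(x,c,a)}\phi$: one checks that $\sup_{\phi}\{\int\phi\,d\pi-\max_{(x,c,a)}\phi\}$ equals $0$ when $\pi$ is a probability measure (testing with constants pins the total mass to $1$, and testing with nonpositive $\phi$ forces $\pi\ge 0$) and $+\infty$ otherwise, so $\Gamma^*(\pi)=-\int A\,d\pi$ on $Prob(X\times\mathcal{C}\times\mathcal{I})$ and $+\infty$ off it. For $\Phi^*(\pi)=\sup_w\{\int d_x^\lambda w(a)\,d\pi-(1-\lambda)\int_X w\,d\nu\}$, linearity in $w$ forces the value $0$ exactly when $\pi$ satisfies the holonomy identity \eqref{holonomic}, and $+\infty$ otherwise.

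Finally I would assemble the right-hand side $\sup_{\pi}\{-\Gamma^*(-\pi)-\Phi^*(\pi)\}$. Setting $\mu=-\pi$, the term $-\Gamma^*(-\pi)$ is finite only when $\mu\in Prob(X\times\mathcal{C}\times\mathcal{I})$, where it equals $\int A\,d\mu$, while $\Phi^*(\pi)$ vanishes exactly when $\mu$ obeys \eqref{holonomic}; together these restrict the supremum to $\mu\in\mathcal{H}^{\lambda}(\nu)$ and give $\sup_{\mu\in\mathcal{H}^{\lambda}(\nu)}\int A_c(\tau_a x)\,d\mu = m_{\lambda}(R)$. Since the left-hand side $\inf_{\psi}\{\Gamma(\psi)+\Phi(\psi)\}$ is, after unfolding $\Phi$ via the unique $w$, precisely the infimum over $w\in C^0(X,\mathbb{R})$ in \eqref{dualgeral}, the identity of Theorem~\ref{FR} yields \eqref{dualgeral}; and because that theorem guarantees the dual supremum is attained, the optimal $\pi$ produces a maximizing $\mu=-\pi\in\mathcal{H}^{\lambda}(\nu)$, which supplies the $\max$ asserted in the statement. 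The main obstacle I anticipate is not any single computation but the bookkeeping: choosing the role assignment so that the continuous functional is the one Theorem~\ref{FR} requires to be continuous at $\psi_0$, and tracking the sign in $-\Gamma^*(-\pi)$ so that the dual variable is correctly identified with $-\mu$.
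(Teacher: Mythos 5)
Your proposal is correct and follows essentially the same route as the paper: the identical Fenchel--Rockafellar setup on $E=C^0(X\times\mathcal{C}\times\mathcal{I})$ with the same choices of $\Gamma$ and $\Phi$, the same computations of $\Gamma^{*}$ (positivity and unit mass forced by testing, then $\Gamma^{*}(\pi)=-\int A_c(\tau_a x)\,d\pi$ on probabilities) and of $\Phi^{*}$ (zero exactly on the holonomy constraint by linearity in $w$), and equivalent sign bookkeeping, since your substitution $\mu=-\pi$ matches the paper's relabeling $\pi\leftrightarrow-\pi$ inside the dual supremum. The only minor refinement is your well-definedness argument for $\Phi$ via injectivity of $w\mapsto d_{x}^{\lambda}w(a)$ (using $\lambda<1$), which is self-contained, whereas the paper checks well-definedness by integrating against a measure with trace $\nu$.
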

\begin{proof}
We define for $\psi \in C(X\times \mathcal{C}\times \mathcal{I})$ the maps
$\displaystyle  \Gamma(\psi)= \sup_{X\times \mathcal{C}\times \mathcal{I}}\left\{ \psi(x,c,a) + A_{c}(\tau_a x) \right\},$
and
$$\Phi(\psi) = \left\{\begin{array}{ll} (1-\lambda)\int w(x) d\nu(x), & \text{ if } \psi(x,c,a) = d_{x}^{\lambda}w(a) \text{ for some } w(x)  \\ +\infty, & \text{ otherwise } \end{array}\right..$$

$\Phi$ is well defined because if $\pi_0$ is any measure with trace $\nu$  and if $\psi = d_{x}^{\lambda}w_1(a)= d_{x}^{\lambda}w_2(a)$, then, $\int d_{x}^{\lambda}w_1(a) d\pi_0= \int d_{x}^{\lambda}w_2(a) d\pi_0$, therefore
$\displaystyle (1-\lambda)\int w_1(x) d\nu(x)=(1-\lambda)\int w_2(x) d\nu(x).$

The functions $\Gamma$ and $\Phi$ are trivially convex. To fulfill the hypothesis of Theorem \ref{FR} we need to find one function $\psi$, such that,  $\Gamma(\psi),\Phi(\psi)<\infty$ and $\Gamma$ is continuous in $\psi$. Then, take any $\psi$ in the form $\psi=d_{x}^{\lambda}w(a)$, for some $w(x)$, and use that $\Gamma$ is continuous for the supremum norm.

Now we study $\Gamma^{*}(\pi)$.
First suppose that $\pi$ is not a positive functional. Then, there exists $u\leq 0$, such that, $\pi(u)>0$. We write $\psi_t(x,c,a) = t u(x,c,a) - A_{c}(\tau_a x)$. Then,
\[\Gamma^{*}(\pi)= \sup_{\psi}\left\{ \int \psi d\pi  - \Gamma(\psi) \right\}
\geq \limsup_{t\to\infty} \int \psi_t(x,y) d\pi  - \Gamma(\psi_t)\]
\[=  \limsup_{t\to\infty}\int t\, u - A_{c}(\tau_a x) d\pi  - \Gamma(t\, u -A_{c}(\tau_a x))
=\limsup_{t\to\infty} \int- A_{c}(\tau_a x) d\pi+t\int  u d\pi  - t \sup{u}\]
\[\geq \limsup_{t\to\infty} \int -A_{c}(\tau_a x) d\pi+ t\int  u d\pi =+\infty. \]

Suppose that $\pi$ is a positive functional. If $\int 1d\pi\neq 1$, then for any $t \in \mathbb{R}$:
\[\Gamma^{*}(\pi)= \sup_{\psi}\left\{ \int \psi d\pi  - \Gamma(\psi) \right\}
\geq \left\{ \int t -A_{c}(\tau_a x) d\pi  - \Gamma(t-A_{c}(\tau_a x)) \right\}\]
\[= \int -A_{c}(\tau_a x)\, d\pi + \int t d\pi  - t = \int -A_{c}(\tau_a x)\, d\pi + t\left(\int 1 d\pi  - 1\right)  . \]
If $\int 1\,d\pi>1$, we get $\Gamma^{*}(\pi) \geq \limsup_{t\to +\infty}\int -A_{c}(\tau_a x)\, d\pi + t(\int 1 d\pi  - 1) = +\infty$.
If $\int 1\,d\pi<1$, we get $\Gamma^{*}(\pi) \geq \limsup_{t\to -\infty}\int- A_{c}(\tau_a x)\, d\pi + t(\int 1 d\pi  - 1) = +\infty$. We conclude that if $\int 1 d\pi \neq 1 $, then, $\Gamma^{*}(\pi)=+\infty$.

Now, we suppose that $\pi$ is a probability. Then:
$$\Gamma^{*}(\pi)= \sup_{\psi}\left\{ \int \psi d\pi  - \Gamma(\psi) \right\} = $$
$$ =\sup_{\psi}\left\{ \int -A_{c}(\tau_a x) d\pi  +\int \psi(x,y)+ A_{c}(\tau_a x) d\pi  - \Gamma(\psi) \right\} = $$
$$ =\sup_{\psi}\left\{ \int -A_{c}(\tau_a x) d\pi + \int (\psi + A_{c}(\tau_a x)) - \Gamma(\psi) d\pi  \right\} \leq $$
$$ \leq\sup_{\psi}\left\{ \int -A_{c}(\tau_a x) d\pi + 0 \right\}  = \int -A_{c}(\tau_a x) d\pi.$$
On the other hand,
$$\Gamma^{*}(\pi)= \sup_{\psi}\left\{ \int \psi  d\pi  - \Gamma(\psi) \right\}  \geq  \left\{ \int -A_{c}(\tau_a x) d\pi  - \Gamma(-A_{c}(\tau_a x)) \right\}= \int -A_{c}(\tau_a x) d\pi$$
Thus, we conclude that $$\Gamma^{*}(\pi)=\left\{\begin{array}{ll} -\int A_{c}(\tau_a x) d\pi(x,c,a) & \text{if}\, \pi\, \text{is a probability}\\ +\infty & \text{otherwise}\end{array}\right. .$$

We recall that $\Phi(\psi) =  (1-\lambda)\int w(x) d\nu(x)$ if $ \psi(x,c,a) = d_{x}^{\lambda}w(a)$, for some $w(x)$ and $ +\infty$, otherwise.
As the Legendre transform is
$\displaystyle \Phi^{*}(-\pi) =  \sup_{\psi}\left[\langle -\pi, \psi \rangle - \Phi(\psi)\right],$
we must consider the supremum just for $\psi(x,c,a) = d_{x}^{\lambda}w(a) $, thus,
\[ \sup_{w}\left[ -\int d_{x}^{\lambda}w(a)\, d\pi - ((1-\lambda)\int w(x) d\nu(x))\right]=0,\] so
\[\Phi^{*}(-\pi)=  \left\{\begin{array}{ll} 0,\,& \text{ if  } \pi \text{ is discounted holonomic }\\ +\infty, & \text{ otherwise} \end{array}\right..\]
Then,
\[ \inf_{\psi \in E}[\Gamma(\psi)+\Phi(\psi)]=
\inf_{w(x)}\left[(1-\lambda)\int w(x) d\nu(x) + \sup_{(x,c,a)} \left\{d_{x}^{\lambda}w(a) + A_{c}(\tau_a x) \right\}\right] \]
and
\[\sup_{\pi\in E^{*}}[-\Gamma^{*}(-\pi)-\Phi^{*}(\pi)] \stackrel{\pi  \Leftrightarrow -\pi}{=} \sup_{\pi\in E^{*}}[ -\Gamma^{*}(\pi)-\Phi^{*}(-\pi)]   = \sup_{\pi \in \mathcal{H}^{\lambda}(\nu)}  \int   A_{c}(\tau_a x)   d\pi.\]
From equation~\eqref{rockafeller} we conclude the proof.
\end{proof}

\textbf{Acknowledgments}: \emph{I would like to thank Professor Leandro Cioletti for having read the manuscript and contributed countless suggestions for its improvement. Thanks also to the anonymous referee, whose suggestions greatly improved the original version of the article.}



\end{document}